\theoremstyle{plain}
\newtheorem{theorem}{Theorem}[section]
\newtheorem{lemma}[theorem]{Lemma}
\newtheorem{proposition}[theorem]{Proposition}
\theoremstyle{definition}
\newtheorem{definition}[theorem]{Definition}
\newtheorem{counter example}[theorem]{Counter Example}
\newtheorem{corollary}[theorem]{Corollary}
\newtheorem{example}[theorem]{Example}
\newtheorem{question}[theorem]{Question}
\numberwithin{equation}{section}
\DeclareMathOperator{\Max}{Max}
\begin{document}

\title[Ideals in Rings and Intermediate Rings of Measurable Functions]{Ideals in Rings and Intermediate Rings of Measurable Functions}

\author[S.K.Acharyya]{Sudip Kumar Acharyya}
\address{Department of Pure Mathematics, University of Calcutta, 35, Ballygunge Circular Road, Kolkata 700019, West Bengal, India}
\email{sdpacharyya@gmail.com}

\author[S. Bag]{Sagarmoy Bag}
\address{Department of Pure Mathematics, University of Calcutta, 35, Ballygunge Circular Road, Kolkata 700019, West Bengal, India}
\email{sagarmoy.bag01@gmail.com}
\thanks{The second author thanks the NBHM, Mumbai-400 001, India, for financial support}

\author[J. Sack]{Joshua Sack} 
\address{Department of Mathematics and Statistics, California State University Long Beach, 1250 Bellflower Blvd, Long Beach, CA 90840, USA}
\email{joshua.sack@csulb.edu}

\keywords{ Rings of Measurable functions, intermediate rings of measurable functions, $\mathcal{A}$-filter on $X$, $\mathcal{A}$-ultrafilter on $X$, $\mathcal{A}$-ideal, absolutely convex ideals; hull-kernel topology; Stone-topology; conditionally complete lattice; $P$-space}
\thanks {}

\subjclass[2010]{Primary 54C40; Secondary 46E30}

\thanks {}

\maketitle
\begin{abstract}
The set of all maximal ideals of the ring $\mathcal{M}(X,\mathcal{A})$ of real valued measurable functions on a measurable space $(X,\mathcal{A})$ equipped with the hull-kernel topology is shown to be homeomorphic to the set $\hat{X}$ of all ultrafilters of measurable sets on $X$ with the Stone-topology. This yields a complete description of the maximal ideals of $\mathcal{M}(X,\mathcal{A})$ in terms of the points of $\hat{X}$. It is further shown that the structure spaces of all the intermediate subrings of $\mathcal{M}(X,\mathcal{A})$ containing the bounded measurable functions are one and the same and are compact Hausdorff zero-dimensional spaces. 
It is observed that when $X$ is a $P$-space, then $C(X) = \mathcal{M}(X,\mathcal{A})$ where $\mathcal{A}$ is the $\sigma$-algebra consisting of the zero-sets of $X$.
\end{abstract}

\section{introduction}

In what follows $(X,\mathcal{A})$ stands for a nonempty set $X$ equipped with a family $\mathcal{A}$ of subsets of $X$, which is closed under countable union and complementation. Such a family $\mathcal{A}$ is known as a $\sigma$-algebra over $X$ and the pair $(X,\mathcal{A})$ is called a measurable space, and members of $\mathcal{A}$ are called $\mathcal{A}$-measurable sets.  
A function $f: X\mapsto \mathbb{R}$ is called $\mathcal{A}$-measurable if for any real number $\alpha$, $f^{-1}(\alpha,\infty)$ is a member of $\mathcal{A}$.  
It is a standard result in measure theory that the aggregate $\mathcal{M}(X,\mathcal{A})$ of all real valued $\mathcal{A}$-measurable functions on $X$, constitutes a commutative lattice ordered ring with unity if the relevant operations are defined point wise on $X$ \cite{AHM2009}. The chief object of study in this article is this ring $\mathcal{M}(X,\mathcal{A})$ together with some of its chosen subrings \emph{viz} those rings which contain all the bounded $\mathcal{A}$-measurable functions on $X$. The first paper concerning this ring dates back to 1966 \cite{G1966}.
It was followed by a series of articles in 1974, 1977, 1978, 1981 in \cite{V1974}, \cite{V1977}, \cite{V1978}, \cite{V1981}.
After a long gap of more than twenty five years, the articles \cite{AAMH2013}, \cite{AHM2009}, and \cite{EMY2018} appeared, which deal with various problems related to these rings. 

In Section~\ref{sec:idealsfilters}, we initiate a kind of duality between ideals (maximal ideals) of the ring $\mathcal{M}(X,\mathcal{A})$ and appropriately defined filters \emph{viz} $\mathcal{A}$-filters ($\mathcal{A}$-ultrafilters) on $X$. An $\mathcal{A}$-filter on $X$ is simply a filter whose members are $\mathcal{A}$-measurable sets. By exploiting this duality, we show that the set of all maximal ideals of $\mathcal{M}(X,\mathcal{A})$ endowed with the familiar hull-kernel topology, also called the structure space of $\mathcal{M}(X,\mathcal{A})$, is homeomorphic to the set $\hat{X}$ of all $\mathcal{A}$-ultrafilters on $X$, equipped with the Stone-topology (Theorem \ref{12}). This is the first important technical result in this article. This further yields a complete description of the maximal ideals of $\mathcal{M}(X,\mathcal{A})$ in terms of $\hat{X}$ (Theorem \ref{13}).  Incidentally, if $\mathcal{M}(X,\mathcal{A})$ is equipped with the $m$-topology, then all ideals in $\mathcal{M}(X,\mathcal{A})$ are closed (Theorem \ref{16}). 
We further note that the $\sigma$-algebra $\mathcal{A}$ on $X$ is finite when and only when each ideal (maximal ideal) of $\mathcal{M}(X,\mathcal{A})$ is fixed (Theorem \ref{15}). 

In Section~\ref{sec:residue}, we consider the order on the quotient ring of $\mathcal{M}(X,\mathcal{A})/I$ for an ideal $I$.
It turns out that $\mathcal{M}(X,\mathcal{A})/I$ is a lattice ordered ring with respect to the natural order induced by the order of the original ring $\mathcal{M}(X,\mathcal{A})$. 
We have the following characterization of the maximal ideals of $\mathcal{M}(X,\mathcal{A})$:
the ideal $I$ is maximal if and only if $\mathcal{M}(X,\mathcal{A})/I$ is totally ordered (Theorem \ref{19}). 
This is the main result in Section~\ref{sec:residue}. 
We define real and hyperreal maximal ideals of $\mathcal{M}(X,\mathcal{A})$ in an analogous manner to their counterparts in rings of continuous functions  and provide a characterization of those ideals in terms of the associated $\mathcal{A}$-ultrafilters on $X$ (Theorem \ref{23}). 

In Section~\ref{sec:intermediate}, we initiate the study of intermediate rings of measurable functions. 
By an intermediate ring of measurable functions we mean a subring $\mathcal{N}(X,\mathcal{A})$ of $\mathcal{M}(X,\mathcal{A})$ which contains $\mathcal{M}^*(X,\mathcal{A})$ of all the bounded measurable functions on $X$. 
The main technical tool in this section, which we borrow from the articles \cite{PSW}, \cite{RW1987}, \cite{RW1997}, is that of local invertibility of measurable functions on measurable sets in the given intermediate ring.  
With each maximal ideal $M$ in $\mathcal{N}(X,\mathcal{A})$, we associate an $\mathcal{A}$-ultrafilter $\mathcal{Z}_{\mathcal{N}}[M]$ on $X$ which leads to a bijection between the set of all maximal ideals of $\mathcal{N}(X,\mathcal{A})$ and the family of all $\mathcal{A}$-ultrafilters on $X$ (Theorems \ref{31} and \ref{32}). 
It is interesting to note that this bijective map becomes a homeomorphism provided the former set is equipped with the hull-kernel topology and the later with the Stone-topology (Theorem \ref{33}). 
This in essence says that the structure space of each intermediate ring of measurable functions is one and the same as that of the original ring $\mathcal{M}(X,\mathcal{A})$. 
In the concluding portion of Section~\ref{sec:intermediate}, we highlight a number of special properties which characterize $\mathcal{M}(X,\mathcal{A})$ among all the intermediate rings $\mathcal{N}(X,\mathcal{A})$ (Theorems \ref{34}, \ref{35}, and \ref{36}). 

In Section~\ref{sec:openproblems}, we highlight several properties enjoyed by the ring $\mathcal{M}(X,\mathcal{A})$ and the ring $C(Y)$ of all real-valued continuous functions defined over a $P$-space $Y$.  We conclude by raising a few questions about the relationship between rings of continuous functions on $P$-spaces and rings of measurable functions. 

\section{Ideals in $\mathcal{M}(X,\mathcal{A})$ versus $\mathcal{A}$-filters on $X$}
\label{sec:idealsfilters}

Throughout the paper, when we speak of ideal unmodified, we will always mean a proper ideal.
In this section, we introduce filters on the lattice of measurable sets, which we call $\mathcal{A}$-filters, and we show that each ideal (maximal ideal) of $\mathcal{M}(X,\mathcal{A})$ corresponds to an $\mathcal{A}$-filter ($\mathcal{A}$-ultrafilter) on $X$. 
We also describe the structure space of $\mathcal{M}(X,\mathcal{A})$.
\begin{definition}
A subfamily $\mathfrak{F}$ of $\mathcal{A}$ is called an $\mathcal{A}$-filter on $X$ if it excludes the empty set and is closed under finite intersection and formation of supersets from the family $\mathcal{A}$. An $\mathcal{A}$-filter on $X$ is said to be an $\mathcal{A}$-ultrafilter if it is not properly contained in any $\mathcal{A}$-filter on $X$. 
\end{definition}
By using Zorn's Lemma, it is easy to see that each $\mathcal{A}$-filter on $X$ extends to an $\mathcal{A}$-ultrafilter on $X$. Indeed $\mathcal{A}$-ultrafilters on $X$ are precisely those subfamilies of $\mathcal{A}$, which possess the finite intersection property and are maximal with respect to this property. Before formally initiating the duality between ideals in $\mathcal{M}(X,\mathcal{A})$ and the $\mathcal{A}$-filters on $X$, we write down the following well known technique of construction of measurable functions from smaller domains to larger ones.

\begin{theorem}[Pasting Lemma] (See \cite[Lemma 6]{AKA2000})\label{1}
If $\{ A_i\}_{i=1}^\infty$ is a countable family of members of $\mathcal{A}$ and $f: \cup_{i=1}^\infty A_i\mapsto \mathbb{R}$ is a function such that $f|_{A_i}$ is a measurable function for each $i$, then $f$ is also a measurable function. 
\end{theorem}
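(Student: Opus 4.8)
The plan is to argue directly from the definition of $\mathcal{A}$-measurability in terms of preimages of the open rays $(\alpha,\infty)$. Write $Y=\bigcup_{i=1}^\infty A_i$. Since $\mathcal{A}$ is closed under countable union, $Y\in\mathcal{A}$, and measurability of a function defined on $Y$ is understood relative to the trace $\sigma$-algebra $\{A\in\mathcal{A}:A\subseteq Y\}$, which (because $Y\in\mathcal{A}$) coincides with $\{A\cap Y:A\in\mathcal{A}\}$. The strategy is to show that for every real $\alpha$ the set $f^{-1}((\alpha,\infty))$ belongs to $\mathcal{A}$.

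First I would fix an arbitrary real number $\alpha$ and use that the domain of $f$ is exactly $Y$ to obtain the decomposition
\[
f^{-1}\big((\alpha,\infty)\big)=\bigcup_{i=1}^\infty\Big(f^{-1}\big((\alpha,\infty)\big)\cap A_i\Big)=\bigcup_{i=1}^\infty (f|_{A_i})^{-1}\big((\alpha,\infty)\big).
\]
Next, the hypothesis that each $f|_{A_i}$ is measurable, combined with the fact that $A_i\in\mathcal{A}$, yields $(f|_{A_i})^{-1}((\alpha,\infty))\in\mathcal{A}$ for every $i$: this set is the preimage of $(\alpha,\infty)$ under a measurable function whose domain $A_i$ is itself a member of $\mathcal{A}$, so it lies in $\mathcal{A}$. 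Finally, closure of $\mathcal{A}$ under countable unions gives $f^{-1}((\alpha,\infty))\in\mathcal{A}$, and since $\alpha$ was arbitrary, $f$ is $\mathcal{A}$-measurable.

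The only point that needs a little care — and the closest thing to an obstacle — is the bookkeeping about which $\sigma$-algebra the word ``measurable'' refers to once the domain is a proper subset of $X$: one must record that, because each $A_i$ and their union $Y$ lie in $\mathcal{A}$, the trace $\sigma$-algebras occurring here are literally the restrictions of $\mathcal{A}$, so the countable-union argument applies verbatim and no ambiguity arises. Everything else is a routine set-theoretic identity together with the defining closure properties of a $\sigma$-algebra.
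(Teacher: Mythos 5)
Your argument is correct: the decomposition $f^{-1}((\alpha,\infty))=\bigcup_{i=1}^\infty (f|_{A_i})^{-1}((\alpha,\infty))$, together with the observation that each $A_i$ lies in $\mathcal{A}$ so that measurability of $f|_{A_i}$ places each piece in $\mathcal{A}$, and closure of $\mathcal{A}$ under countable unions, is exactly the standard proof of this lemma. The paper itself gives no proof, deferring instead to the cited reference, so there is nothing to compare against beyond noting that your direct verification (including the careful remark about trace $\sigma$-algebras) is the expected argument.
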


For any $f\in \mathcal{M}(X,\mathcal{A})$, we let $Z(f)$ denote the zero-set of $f$ and $c Z(f)=X\setminus Z(f)$ the co-zero set of $f$; here $Z(f)=\{x\in X: f(x)=0\}$. It is clear that zero-sets and co-zero sets of functions lying in $\mathcal{M}(X,\mathcal{A})$ are all members of $\mathcal{A}$. 
Conversely, each set $E\in \mathcal{A}$ is the zero set of some function in $\mathcal{M}(X,\mathcal{A})$, indeed $E=Z(\chi_{E^c})$, where $\chi_{E^c}$ is the characteristic function of $E^c=X\setminus E$ on $X$.

The following proposition is an immediate consequence of Pasting Lemma. 

\begin{theorem}\label{2}
For $f, g\in \mathcal{M}(X,\mathcal{A}), Z(f)\supseteq Z(g)$ if and only if $f$ is a multiple of $g$.
\end{theorem}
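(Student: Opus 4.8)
The plan is to prove both implications separately, with the forward direction being the substantive one.

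\textbf{The easy direction.} Suppose $f = hg$ for some $h \in \mathcal{M}(X,\mathcal{A})$. Then wherever $g$ vanishes, so does $f$, hence $Z(g) \subseteq Z(f)$, i.e. $Z(f) \supseteq Z(g)$. This requires no measurability input at all.

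\textbf{The main direction.} Suppose $Z(f) \supseteq Z(g)$; I want to produce $h \in \mathcal{M}(X,\mathcal{A})$ with $f = hg$. The natural candidate is to set $h = f/g$ on $cZ(g)$ and $h = 0$ (or anything measurable) on $Z(g)$. Concretely, define
\[
h(x) = \begin{cases} f(x)/g(x) & x \in cZ(g)\\ 0 & x \in Z(g).\end{cases}
\]
Then $f = hg$ holds pointwise: on $cZ(g)$ it is clear, and on $Z(g)$ we have $g(x) = 0$ and also $f(x) = 0$ since $Z(g) \subseteq Z(f)$, so both sides are $0$. The only thing to check is that $h$ is $\mathcal{A}$-measurable. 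I would invoke the Pasting Lemma (Theorem~\ref{1}): note $X = Z(g) \cup cZ(g)$ with both sets in $\mathcal{A}$, so it suffices to show $h|_{Z(g)}$ and $h|_{cZ(g)}$ are measurable (as functions on those measurable subspaces, with the induced $\sigma$-algebra). On $Z(g)$, $h$ is identically $0$, hence measurable. On $cZ(g)$, $h = f/g$; here $f|_{cZ(g)}$ and $g|_{cZ(g)}$ are measurable, and $g$ is nowhere zero on $cZ(g)$, so $1/g|_{cZ(g)}$ is measurable (preimages of rays under $1/g$ pull back to preimages of rays under $g$), and a product of measurable functions is measurable. Thus $h|_{cZ(g)}$ is measurable, and the Pasting Lemma gives $h \in \mathcal{M}(X,\mathcal{A})$.

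\textbf{Anticipated obstacle.} There is essentially no deep obstacle; the only mild subtlety is the bookkeeping about measurability of restrictions — one must be careful that ``measurable on a measurable subset'' is interpreted with respect to the trace $\sigma$-algebra, and that the Pasting Lemma as stated applies to a finite (here two-element) family, which it does since a finite family is a countable family. One should also confirm that $1/g$ restricted to the co-zero set is genuinely measurable there; this is routine since for $\alpha > 0$ one has $(1/g)^{-1}(\alpha,\infty) \cap cZ(g) = \{x \in cZ(g): 0 < g(x) < 1/\alpha\}$ and similarly for $\alpha \le 0$, all of which lie in $\mathcal{A}$. Assembling these observations yields the result.
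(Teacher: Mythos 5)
Your proposal is correct and follows essentially the same route as the paper: define $h = f/g$ off $Z(g)$ and $h = 0$ on $Z(g)$, then invoke the Pasting Lemma to conclude $h \in \mathcal{M}(X,\mathcal{A})$ and $f = gh$. You simply spell out the measurability bookkeeping that the paper leaves implicit.
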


\begin{proof}
If $f$ is a multiple of $g$ in $\mathcal{M}(X,\mathcal{A})$, then it is trivial that $Z(f)\supseteq Z(g)$. Conversely let $Z(f)\supseteq Z(g)$. Define a function $h: (X,\mathcal{A})\mapsto \mathbb{R}$ by the following rule: $h(x)=\frac{f(x)}{g(x)}$ if $x\notin Z(g)$ and $h(x)=0$ if $x\in Z(g)$. Then by the Pasting Lemma $h$ is a member of $\mathcal{M}(X,\mathcal{A})$ and clearly $f=gh$.
\end{proof}

It follows from Theorem~\ref{2} that each $f\in \mathcal{M}(X,\mathcal{A})$ is a multiple of $f^2$, and hence $\mathcal{M}(X,\mathcal{A})$ is a Von-Neumann regular ring. 
It is well-known that any commutative reduced ring is Von-Neumann regular if and only if each of its prime ideals is maximal (see \cite[Theorem 1.16]{Goodearl}). 
Thus we have the following corollary. 
\begin{corollary}\label{3}
Every prime ideal of $\mathcal{M}(X,\mathcal{A})$ is maximal.
\end{corollary}
An ideal $I$ in a commutative ring $R$ with unity is called $z^\circ$-ideal if for each $a\in I, P_a\subseteq I$, where $P_a$ is the intersection of all minimal prime ideals containing $a$.
Since each ideal in a Von-Neumann regular ring is a $z^\circ$-ideal \cite[Remark 1.6(a)]{AKA2000}, it follows that all ideals of $\mathcal{M}(X,\mathcal{A})$ are $z^\circ$-ideals. 
This fact is also independently observed by \cite[Proposition 9]{EMY2018}.

For any ideal $I$ in $\mathcal{M}(X,\mathcal{A})$, let $Z[I]=\{Z(f):f\in I\}$, and for any $\mathcal{A}$-filter on $X$, let $Z^{-1}[\mathfrak{F}]=\{f\in \mathcal{M}(X,\mathcal{A}): Z(f)\in \mathfrak{F}\}$.
The following theorem entailing a duality between ideals in $\mathcal{M}(X,\mathcal{A})$ and $\mathcal{A}$-filters on $X$ is a measure-theoretic analog to \cite[Theorem 2.3]{GJ}, and can be established by using some routine arguments.  See also \cite[Proposition 3]{EMY2018}.

\begin{theorem}\label{6}
Let $I$ be an ideal in $\mathcal{M}(X,\mathcal{A})$, and $\mathfrak{F}$ be an $\mathcal{A}$-filter on $X$.
Then $Z[I]$ is an $\mathcal{A}$-filter on $X$, and $Z^{-1}[\mathfrak{F}]$ is an ideal in $\mathcal{M}(X,\mathcal{A})$
\end{theorem}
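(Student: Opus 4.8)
The plan is to verify the defining closure properties of an $\mathcal{A}$-filter for $Z[I]$ and the defining closure properties of a proper ideal for $Z^{-1}[\mathfrak{F}]$, drawing on Theorem~\ref{2} and the von Neumann regularity noted after it. First I would handle $Z[I]$. Since $I$ is proper, no member of $I$ is a unit; and any $f\in \mathcal{M}(X,\mathcal{A})$ with $Z(f)=\emptyset$ is a unit, because $x\mapsto 1/f(x)$ is again measurable (by the Pasting Lemma, Theorem~\ref{1}, or directly). Hence $\emptyset\notin Z[I]$. For closure under finite intersection, given $f,g\in I$ the element $f^2+g^2$ lies in $I$ and satisfies $Z(f^2+g^2)=Z(f)\cap Z(g)$, so $Z(f)\cap Z(g)\in Z[I]$. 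For closure under $\mathcal{A}$-supersets, suppose $Z(f)\in Z[I]$ and $A\in \mathcal{A}$ with $A\supseteq Z(f)$. Writing $A=Z(\chi_{A^c})$ as observed before Theorem~\ref{2}, we have $Z(\chi_{A^c})\supseteq Z(f)$, so by Theorem~\ref{2} the function $\chi_{A^c}$ is a multiple of $f$ and therefore belongs to $I$; thus $A=Z(\chi_{A^c})\in Z[I]$. This shows $Z[I]$ is an $\mathcal{A}$-filter on $X$.

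Next I would treat $Z^{-1}[\mathfrak{F}]$. It is nonempty since $Z(0)=X\in \mathfrak{F}$, and it is proper since $Z(1)=\emptyset\notin \mathfrak{F}$, so $1\notin Z^{-1}[\mathfrak{F}]$. For closure under addition, if $f,g\in Z^{-1}[\mathfrak{F}]$ then $Z(f),Z(g)\in \mathfrak{F}$, hence $Z(f)\cap Z(g)\in \mathfrak{F}$; since $Z(f)\cap Z(g)\subseteq Z(f+g)$ and $Z(f+g)\in \mathcal{A}$, the superset-closure of $\mathfrak{F}$ gives $Z(f+g)\in \mathfrak{F}$, i.e.\ $f+g\in Z^{-1}[\mathfrak{F}]$. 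For closure under multiplication by an arbitrary $h\in \mathcal{M}(X,\mathcal{A})$, observe $Z(hf)\supseteq Z(f)\in \mathfrak{F}$, so $Z(hf)\in \mathfrak{F}$ and $hf\in Z^{-1}[\mathfrak{F}]$. This completes the verification.

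None of the steps presents a genuine obstacle; the only points requiring a little care are the identification of zero-free measurable functions as units (needed so that $Z[I]$ avoids $\emptyset$) and the use of Theorem~\ref{2} together with the representation $A=Z(\chi_{A^c})$ to obtain closure under $\mathcal{A}$-supersets --- this is precisely where the measure-theoretic analogue of the Gillman--Jerison argument exploits the special structure of $\mathcal{M}(X,\mathcal{A})$ rather than just general ring theory. Everything else is the standard filter/ideal bookkeeping.
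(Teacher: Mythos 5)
Your proof is correct and is exactly the routine verification the paper alludes to (it gives no proof of Theorem~\ref{6}, citing only that it is the measure-theoretic analog of \cite[Theorem 2.3]{GJ}): the key points --- zero-free measurable functions are units, $Z(f^2+g^2)=Z(f)\cap Z(g)$, and superset closure via $A=Z(\chi_{A^c})$ together with Theorem~\ref{2} --- are all handled properly. No gaps.
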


\begin{proposition}\label{5}
If $I$ is an ideal in $\mathcal{M}(X,\mathcal{A})$ containing a function $f$, then any $g$ in $\mathcal{M}(X,\mathcal{A})$ with $Z(g)=Z(f)$ is also a member of $I$.
\end{proposition}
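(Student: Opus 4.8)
The plan is to reduce Proposition \ref{5} directly to Theorem \ref{2}. Suppose $I$ is an ideal in $\mathcal{M}(X,\mathcal{A})$ with $f \in I$, and let $g \in \mathcal{M}(X,\mathcal{A})$ satisfy $Z(g) = Z(f)$. In particular $Z(f) \supseteq Z(g)$, so Theorem \ref{2} applies with the roles of $f$ and $g$ interchanged: since $Z(g) \supseteq Z(f)$, the function $g$ is a multiple of $f$, say $g = f h$ for some $h \in \mathcal{M}(X,\mathcal{A})$. Because $I$ is an ideal and $f \in I$, we conclude $g = f h \in I$, which is exactly what we want.

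The only subtlety is the direction in which Theorem \ref{2} is invoked: Theorem \ref{2} reads ``$Z(f) \supseteq Z(g)$ iff $f$ is a multiple of $g$,'' and here we need $g$ to be a multiple of $f$, so we apply it with the symbols swapped, using the inclusion $Z(g) \supseteq Z(f)$ (which follows from the equality $Z(g) = Z(f)$). This is a purely notational point, and there is no real obstacle.

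I would write the proof in two or three lines: state that $Z(g) = Z(f)$ gives $Z(g) \supseteq Z(f)$, invoke Theorem \ref{2} to get $g = fh$ with $h \in \mathcal{M}(X,\mathcal{A})$, and then note $g \in I$ since $f \in I$ and $I$ is an ideal. Concretely, one can even exhibit $h$ explicitly via the pasting construction used in the proof of Theorem \ref{2}, namely $h(x) = g(x)/f(x)$ off $Z(f)$ and $h(x) = 0$ on $Z(f)$, but citing Theorem \ref{2} makes this unnecessary.
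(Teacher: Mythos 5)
Your proof is correct and is exactly the argument the paper intends (the paper omits the proof as immediate from Theorem~\ref{2}): from $Z(g)\supseteq Z(f)$ one gets $g=fh$ for some $h\in\mathcal{M}(X,\mathcal{A})$, and ideals absorb multiples. Your care with the direction in which Theorem~\ref{2} is applied is appropriate but, as you say, purely notational.
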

The first of the following is a consequence of Proposition \ref{5} and the second directly from the definitions: if $I$ is an ideal of $\mathcal{M}(X,\mathcal{A})$ and $\mathfrak{F}$ is an $\mathcal{A}$-filter on $X$, then
\begin{equation}\label{eq:zzinv}
Z^{-1}Z[I]=I \qquad\text{and} \qquad ZZ^{-1}[\mathfrak{F}] = \mathfrak{F}.
\end{equation}
As a result, we have the following correspondence.
\begin{theorem}\label{thm:correspondence}
The map $Z: I\mapsto Z[I]$ is a bijective correspondence between ideals in $\mathcal{M}(X,\mathcal{A})$ and the $\mathcal{A}$-filters on $X$.
Moreover, if $M$ is a maximal ideal, then $Z[M]$ is an $\mathcal{A}$-ultrafilter, and if $\mathcal{U}$ is an $\mathcal{A}$-ultrafilter, then $Z^{-1}[\mathcal{U}]$ is a maximal ideal.
\end{theorem}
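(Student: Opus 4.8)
The plan is to deduce everything from Theorem~\ref{6} together with the two identities recorded in~\eqref{eq:zzinv}. By Theorem~\ref{6} the assignment $I\mapsto Z[I]$ is a well-defined map from the set of ideals of $\mathcal{M}(X,\mathcal{A})$ to the set of $\mathcal{A}$-filters on $X$, and $\mathfrak{F}\mapsto Z^{-1}[\mathfrak{F}]$ is a well-defined map in the opposite direction. The identities $Z^{-1}Z[I]=I$ and $ZZ^{-1}[\mathfrak{F}]=\mathfrak{F}$ say precisely that these two maps are mutually inverse; hence each is a bijection, and in particular $Z$ is the claimed bijective correspondence. (The only point requiring a word of care is that $Z[I]$ genuinely excludes $\emptyset$ when $I$ is proper, equivalently that $Z(f)=\emptyset$ forces $f$ to be a unit, which holds because $1/f\in\mathcal{M}(X,\mathcal{A})$ by the Pasting Lemma (Theorem~\ref{1}) when $f$ is nowhere zero; but this is already part of the content of Theorem~\ref{6}.)

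For the second assertion I would observe that both $Z$ and $Z^{-1}$ are inclusion-preserving: if $I\subseteq J$ are ideals then trivially $Z[I]\subseteq Z[J]$, and if $\mathfrak{F}\subseteq\mathfrak{G}$ are $\mathcal{A}$-filters then trivially $Z^{-1}[\mathfrak{F}]\subseteq Z^{-1}[\mathfrak{G}]$. Combined with the bijectivity just established, this makes $Z$ an order isomorphism from the poset of proper ideals of $\mathcal{M}(X,\mathcal{A})$, ordered by inclusion, onto the poset of $\mathcal{A}$-filters on $X$, ordered by inclusion. An order isomorphism carries maximal elements to maximal elements in both directions. Since, by definition, the maximal ideals are exactly the maximal elements of the first poset and the $\mathcal{A}$-ultrafilters are exactly the maximal elements of the second, it follows at once that $Z[M]$ is an $\mathcal{A}$-ultrafilter whenever $M$ is a maximal ideal, and that $Z^{-1}[\mathcal{U}]$ is a maximal ideal whenever $\mathcal{U}$ is an $\mathcal{A}$-ultrafilter.

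There is no serious obstacle in this argument; all the real work has already been absorbed into Theorem~\ref{6}, Proposition~\ref{5}, and the passage to~\eqref{eq:zzinv}. If anything, the only thing worth spelling out explicitly is the verification that the two maps are inclusion-preserving and that ``maximal ideal'' and ``$\mathcal{A}$-ultrafilter'' are precisely the maximal-element notions in the respective posets, so that the abstract order-isomorphism argument applies cleanly. Alternatively, one could avoid the order-theoretic phrasing and argue directly: if $M$ is maximal and $\mathfrak{G}$ is an $\mathcal{A}$-filter with $Z[M]\subseteq\mathfrak{G}$, apply $Z^{-1}$ to get $M=Z^{-1}Z[M]\subseteq Z^{-1}[\mathfrak{G}]$, whence $M=Z^{-1}[\mathfrak{G}]$ by maximality, and apply $Z$ again to get $Z[M]=\mathfrak{G}$; the symmetric computation handles an $\mathcal{A}$-ultrafilter $\mathcal{U}$ and an ideal $J\supseteq Z^{-1}[\mathcal{U}]$.
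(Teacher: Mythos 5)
Your argument is correct and is exactly the route the paper intends: the paper states this theorem without proof, presenting it as an immediate consequence of Theorem~\ref{6} and the identities in~\eqref{eq:zzinv}, which is precisely what you have spelled out (mutual inverses give bijectivity, and the order-isomorphism observation transfers maximality in both directions). Your explicit check that $Z[I]$ excludes $\emptyset$ for proper $I$, and the direct non-order-theoretic variant at the end, are both sound and fill in details the paper leaves implicit.
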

An ideal $I$ in $\mathcal{M}(X,\mathcal{A})$ is called fixed if $\cap Z[I]\neq \emptyset$, otherwise $I$ is called a free ideal. It was observed in \cite[Proposition 6]{EMY2018} by adapting the arguments in \cite[Theorem 4.6(a)]{GJ} that the complete list of fixed maximal ideals in $\mathcal{M}(X,\mathcal{A})$ is given by $\{M_p:p\in X\}$, where $M_p=\{ f\in \mathcal{M}(X,\mathcal{A}): f(p)=0\}$. If in addition, $\mathcal{A}$ separates points of $X$ in the sense that given any two distinct points $a, b$ in $X$, there is a member $E$ of $\mathcal{A}$, which contains exactly one of them, then $M_p\neq M_q$, whenever $p\neq q$ in $X$. It is established in \cite[Theorem 1.2]{M1971} that if a commutative ring $R$ with unity is also a Gelfand ring meaning that each prime ideal in $R$ extends to a unique maximal ideal, then the structure space of $R$ is Hausdorff. It follows therefore from Corollary \ref{3} that, the structure space of the ring $\mathcal{M}(X,\mathcal{A})$ is Hausdorff. 
It also follows from a more general result Theorem~\ref{32}, that we prove later in this paper.
Nevertheless, we shall produce an alternative proof of this assertion, by exploiting the duality between maximal ideals and $\mathcal{A}$-ultrafilters in Theorem \ref{thm:correspondence}.

\begin{theorem}\label{7}
The structure space of $\mathcal{M}(X,\mathcal{A})$ is a (compact) Hausdorff space.
\end{theorem}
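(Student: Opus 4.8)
The plan is to transport the question, via the bijective correspondence $Z\colon M\mapsto Z[M]$ of Theorem~\ref{thm:correspondence}, to the set of $\mathcal{A}$-ultrafilters on $X$, where two distinct ultrafilters are visibly separated by a splitting of $X$ into two measurable pieces. Recall that in the hull-kernel topology on the structure space the sets $h(f)=\{M\in\Max\mathcal{M}(X,\mathcal{A}):f\in M\}$, with $f$ running over $\mathcal{M}(X,\mathcal{A})$, form a base for the closed sets, so their complements form a base for the open sets. The crucial remark is that for $E\in\mathcal{A}$ the characteristic function $\chi_E$ is an idempotent of $\mathcal{M}(X,\mathcal{A})$ satisfying $\chi_E\chi_{E^c}=0$ and $\chi_E+\chi_{E^c}=1$; consequently every maximal (in fact every prime) ideal contains exactly one of $\chi_E,\chi_{E^c}$, so $h(\chi_E)$ and $h(\chi_{E^c})$ are complementary subsets of $\Max\mathcal{M}(X,\mathcal{A})$ and hence each of them is clopen.

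First I would prove the Hausdorff property. Let $M_1\neq M_2$ be maximal ideals; by Theorem~\ref{thm:correspondence} the $\mathcal{A}$-ultrafilters $Z[M_1]$ and $Z[M_2]$ are distinct, so some $E\in\mathcal{A}$ lies in exactly one of them, say $E\in Z[M_1]\setminus Z[M_2]$. Since $Z(\chi_{E^c})=E$ and $M_i=Z^{-1}Z[M_i]$ by \eqref{eq:zzinv}, it follows at once that $\chi_{E^c}\in M_1$ and $\chi_{E^c}\notin M_2$; that is, $M_1\in h(\chi_{E^c})$ while $M_2\in\Max\mathcal{M}(X,\mathcal{A})\setminus h(\chi_{E^c})=h(\chi_E)$. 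As $h(\chi_{E^c})$ and $h(\chi_E)$ are disjoint open sets, the structure space is Hausdorff, and the same argument shows it is zero-dimensional.

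It remains to note compactness. One may simply invoke the standard fact that the structure space of any commutative ring with identity is compact; alternatively it can be deduced from the correspondence by a finite-intersection-property argument: a family $\{h(f_\alpha)\}_\alpha$ has the finite intersection property exactly when $\{Z(f_\alpha)\}_\alpha$ does (since $h(f_1)\cap\dots\cap h(f_n)$ is nonempty iff the ideal $(f_1,\dots,f_n)$ is proper iff $Z(f_1)\cap\dots\cap Z(f_n)=Z(f_1^2+\dots+f_n^2)\neq\emptyset$, the last equivalence using Theorem~\ref{2}), and then the latter family extends by Zorn's Lemma to an $\mathcal{A}$-ultrafilter $\mathcal{U}$, whose associated maximal ideal $Z^{-1}[\mathcal{U}]$ lies in $\bigcap_\alpha h(f_\alpha)$. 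The argument has no genuinely hard step; the only place demanding care is the bookkeeping that converts "$f\in M$" into "$Z(f)\in Z[M]$" and back, and once the idempotent $\chi_E$ has been brought in even that is routine.
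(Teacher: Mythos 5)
Your proof is correct, but the separation step takes a genuinely different route from the paper's. Both proofs begin identically, passing via Theorem~\ref{thm:correspondence} to the distinct $\mathcal{A}$-ultrafilters $Z[M_1]\neq Z[M_2]$. The paper then extracts $f_1\in M_1$, $f_2\in M_2$ with $Z(f_1)\cap Z(f_2)=\emptyset$, forms the auxiliary function $g=f_1^2/(f_1^2+f_2^2)$, splits $X$ into the measurable sets $\{g\le 1/2\}$ and $\{g\ge 1/2\}$ written as $Z(h_1)$ and $Z(h_2)$, and invokes the criterion of \cite[Exercise 7M4]{GJ} (two elements $h_1\notin M_2$, $h_2\notin M_1$ with $h_1h_2=0$). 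You instead pick a single $E\in\mathcal{A}$ lying in exactly one of the two ultrafilters and exploit the idempotent $\chi_E$: since $\chi_E\chi_{E^c}=0$ and $\chi_E+\chi_{E^c}=1$, every maximal ideal contains exactly one of $\chi_E,\chi_{E^c}$, so the basic closed sets $h(\chi_E)=\{M:\chi_E\in M\}$ and $h(\chi_{E^c})$ are complementary, hence clopen, and by \eqref{eq:zzinv} they separate $M_1$ from $M_2$. This is shorter, dispenses with the auxiliary function $g$ and the external criterion from \cite{GJ}, and yields zero-dimensionality of the structure space at no extra cost (a fact the paper only records later, in Theorem~\ref{12}). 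Your treatment of compactness --- reducing the finite intersection property of basic closed sets to $Z(f_1^2+\dots+f_n^2)\neq\emptyset$ via Theorem~\ref{2} and then extending to an $\mathcal{A}$-ultrafilter --- is also sound; the paper leaves compactness as the standard fact about structure spaces of commutative rings with identity and proves only the Hausdorff property.
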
 

\begin{proof}
Let $M_1$ and $M_2$ be two distinct maximal ideals of $\mathcal{M}(X,\mathcal{A})$. Then by Theorem \ref{thm:correspondence}, the $\mathcal{A}$-ultrafilters $Z[M_1]$ and $Z[M_2]$ are also different, this implies in view of the maximality of an $\mathcal{A}$-ultrafilter on $X$ with respect to having the finite intersection property that, there exists $f_1\in M_1$ and $f_2\in M_2$ such that $Z(f_1)\cap Z(f_2)=\phi$. Let $g=\frac{(f_1)^2}{(f_1)^2+(f_2)^2}$. Then $g\in \mathcal{M}(X,\mathcal{A})$. Let $Z_1=\{x\in X: g(x)\leq \frac{1}{2}\}$ and $Z_2=\{ x\in X: g(x)\geq \frac{1}{2}\}$. then $Z_1$ and $Z_2$ are $\mathcal{A}$-measurable sets in $X$ and $Z_2\cup Z_2=X$. We can write $Z_1=Z(h_1)$ and $Z_2=Z(h_2)$, where $h_1, h_2\in \mathcal{M}(X,\mathcal{A})$. We see that $Z(h_2)\cap Z(f_1)=Z(h_1)\cap Z(f_2)=\phi $. Hence $h_2\notin M_1$ and $h_1\notin M_2$. Also $h_1h_2=0$; because $Z(h_1h_2)=Z_1\cup Z_2=X$. By \cite[Exercise 7M4]{GJ}, the structure space of $\mathcal{M}(X,\mathcal{A})$ is Hausdorff. 
\end{proof}

We now show that the hull-kernel topology of the structure space of $\mathcal{M}(X,\mathcal{A})$ can be identified with the Stone-topology on the set of all $\mathcal{A}$-ultrafilters on $X$. 
We now focus on a measure-theoretic analog of  \cite[Theorem 6.5]{GJ}.

Let $\hat{X}$ be an enlargement of the set $X$, with the intention that it will serve as an index set for the family of all $\mathcal{A}$-ultrafilers on $X$. For each $p\in \hat{X}$, let the corresponding $\mathcal{A}$-ultrafilter be denoted by $\mathcal{U}^p$ with the stipulation that for $p\in X$, $\mathcal{U}^p=\mathcal{U}_p=\{A\in \mathcal{A}: p\in A \}$. For each $A\in \mathcal{A}$, let $\bar{A}=\{ p\in \hat{X}: A\in \mathcal{U}^p\}$. Then $\{\bar{A}: A\in \mathcal{A}\}$ is a base for the closed sets of some topology, \emph{viz} the Stone-topology on $\hat{X}$. We shall simply write $\hat{X}$ to denote the set $\hat{X}$ with this Stone-topology. 
Observing that for all measurable sets $A,B\in \mathcal{A}$, $A\subseteq \bar{A}$, $A\subseteq B$ implies $\bar{A}\subseteq \bar{B}$, and $\bar{A}\cap X = A$, it is not hard to establish the following theorem which is a measure-theoretic analog of \cite[Theorem 6.5(b)]{GJ}.

\begin{theorem}\label{11}
For any $A\in \mathcal{A}$, $\bar{A}=cl_{\hat{X}}A$.
In particular $cl_{\hat{X}}X=\hat{X}$.
\end{theorem}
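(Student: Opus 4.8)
The plan is to prove the two inclusions $cl_{\hat{X}}A\subseteq\bar{A}$ and $\bar{A}\subseteq cl_{\hat{X}}A$ separately, using only the three elementary facts recorded immediately before the statement ($A\subseteq\bar{A}$, the monotonicity $A\subseteq B\Rightarrow\bar{A}\subseteq\bar{B}$, and $\bar{A}\cap X=A$) together with the already-noted fact that $\{\bar{B}:B\in\mathcal{A}\}$ is a base for the closed sets of $\hat{X}$.

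For the inclusion $cl_{\hat{X}}A\subseteq\bar{A}$: the set $\bar{A}$ belongs to the closed base, so it is a closed subset of $\hat{X}$, and it contains $A$; since $cl_{\hat{X}}A$ is the smallest closed set containing $A$, the inclusion follows at once.

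For the reverse inclusion I would exploit the meaning of ``base for the closed sets'': every closed subset of $\hat{X}$ is an intersection of sets of the form $\bar{B}$ with $B\in\mathcal{A}$, and hence $cl_{\hat{X}}A=\bigcap\{\bar{B}:B\in\mathcal{A},\ A\subseteq\bar{B}\}$. The decisive step is to check that $\bar{A}\subseteq\bar{B}$ for every $B\in\mathcal{A}$ with $A\subseteq\bar{B}$: intersecting $A\subseteq\bar{B}$ with $X$ and invoking $\bar{B}\cap X=B$ gives $A=\bar{A}\cap X\subseteq B$, so $\bar{A}\subseteq\bar{B}$ by monotonicity. Intersecting over all such $B$ then yields $\bar{A}\subseteq cl_{\hat{X}}A$, and combining the two inclusions gives $\bar{A}=cl_{\hat{X}}A$.

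For the displayed special case I would note that $X\in\mathcal{A}$ and that every $\mathcal{A}$-ultrafilter on $X$ is nonempty and closed under formation of $\mathcal{A}$-supersets, hence contains $X$; therefore $\bar{X}=\{p\in\hat{X}:X\in\mathcal{U}^p\}=\hat{X}$, and applying the first part with $A=X$ gives $cl_{\hat{X}}X=\bar{X}=\hat{X}$. There is essentially no serious obstacle here; the only point requiring a little care is the passage from an arbitrary closed set to an intersection of basic closed sets $\bar{B}$, after which the identity $\bar{B}\cap X=B$ does all the work.
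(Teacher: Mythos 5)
Your proof is correct and uses exactly the three observations the paper records before the statement ($A\subseteq\bar{A}$, monotonicity, and $\bar{A}\cap X=A$), which is precisely the route the paper intends (it leaves the verification to the reader). The key step, that $cl_{\hat{X}}A$ is the intersection of all basic closed sets $\bar{B}$ containing $A$ and that each such $\bar{B}$ must contain $\bar{A}$ because $A\subseteq\bar{B}\cap X=B$, is exactly right, as is the treatment of the special case $A=X$.
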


Given a maximal ideal $M$ in $\mathcal{M}(X,\mathcal{A})$, $Z[M]$ is an $\mathcal{A}$-ultrafilter on $X$, and therefore there exists a unique point $p\in \hat{X}$ such that $Z[M]=\mathcal{U}^p$. 
In this way, we obtain a map $\psi:\Max(\mathcal{M})\to \hat{X}$, where $\Max(\mathcal{M})$ is the set of all maximal ideals in $\mathcal{M}(X,\mathcal{A})$, such that $\psi(M)= p$.
By Theorem~\ref{thm:correspondence}, $\psi$ is a bijection.
Furthermore 
for any $f\in \mathcal{M}(X,\mathcal{A})$ and $M\in \Max(\mathcal{M})$, we have the following equivalence:
\begin{align*}
f\in M &\Leftrightarrow  Z(f) \in Z(M) && \text{(Theorem \ref{6})}\\
& \Leftrightarrow Z(f)\in \mathcal{U}^p && \text{where $\psi (M)=p$}\\
& \Leftrightarrow p\in cl_{\hat{X}}Z(f)
\end{align*}
Thus we can write $\psi(\mathcal{B}_f)=cl_{\hat{X}}Z(f) = \overline{Z(f)}$ for any $f\in \mathcal{M}(X,\mathcal{A})$, where $\mathcal{B}_f=\{ M\in \Max (\mathcal{M}): f\in M\}$. 
Therefore $\psi$ induces a bijection between the basic closed sets of the structure space $\Max (\mathcal{M})$ of $\mathcal{M}(X,\mathcal{A})$ and the basic closed sets $\bar{A}$ of the space $\hat{X}$ with the Stone-topology. 
Furthermore we observe that for $f\in \mathcal{M}(X,\mathcal{A}), \hat{X}\setminus cl_{\hat{X}}Z(f)=cl_{\hat{X}}(X\setminus Z(f))=cl_{\hat{X}}Z(g)$ for some $g\in \mathcal{M}(X,\mathcal{A})$. This shows that $cl_{\hat{X}}Z(f), f\in \mathcal{M}(X,\mathcal{A})$ are all clopen sets in the space $\hat{X}$. So, we can write:

\begin{theorem}\label{12}
The structure space $\Max (\mathcal{M})$ of $\mathcal{M}(X,\mathcal{A})$ is homeomorphic to the space $\hat{X}$, under the map $\psi: M\mapsto p$, where $Z[M]=\mathcal{U}^p$. Furthermore $\hat{X}$ is a compact Hausdorff zero-dimensional space.
\end{theorem}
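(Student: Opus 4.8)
Most of what is needed has already been assembled in the discussion immediately preceding the statement, so the plan is largely one of consolidation. The map $\psi$ is a bijection, being the composite of the correspondence of Theorem~\ref{thm:correspondence} with the tautological indexing $p\mapsto\mathcal U^p$ of $\hat X$. To upgrade $\psi$ to a homeomorphism I would use the elementary fact that a bijection carrying a base for the closed sets of one space onto a base for the closed sets of the other is automatically a homeomorphism (a bijection commutes with arbitrary intersections, hence maps closed sets to closed sets in both directions). Now $\{\mathcal B_f : f\in\mathcal M(X,\mathcal A)\}$ is a base for the closed sets of $\Max(\mathcal M)$ in the hull-kernel topology, since every hull-kernel-closed set is the hull $\{M\in\Max(\mathcal M): I\subseteq M\}$ of some ideal $I$, and hence equals $\bigcap_{f\in I}\mathcal B_f$; and $\{\bar A : A\in\mathcal A\}$ is, by construction of the Stone topology, a base for the closed sets of $\hat X$. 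The computation $\psi(\mathcal B_f)=cl_{\hat{X}}Z(f)=\overline{Z(f)}$ recorded above, together with the fact that every measurable set is a zero-set (namely $A=Z(\chi_{A^c})$, so $f\mapsto Z(f)$ maps $\mathcal M(X,\mathcal A)$ onto $\mathcal A$), shows that $\psi$ puts these two closed bases in bijective correspondence; hence $\psi$ is a homeomorphism.

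For the last sentence of the theorem I would transport compactness and Hausdorffness of $\hat X$ across $\psi$: the structure space of any commutative ring with identity is compact in the hull-kernel topology, so $\Max(\mathcal M)$ is compact, and it is Hausdorff by Theorem~\ref{7}. (A self-contained route to compactness: if $\{\hat X\setminus\bar{A_i}\}$ covers $\hat X$ then $\bigcap_i\bar{A_i}=\emptyset$, so $\{A_i\}$ fails the finite intersection property, since otherwise it would extend to an $\mathcal A$-ultrafilter whose index point would lie in every $\bar{A_i}$; hence finitely many of the $\bar{A_i}$ already have empty intersection, and the corresponding basic open sets already cover $\hat X$. Hausdorffness can likewise be extracted directly from the ultrafilter separation argument in the proof of Theorem~\ref{7}.) Zero-dimensionality is cleanest on the $\hat X$ side: it was observed above that $\hat X\setminus cl_{\hat{X}}Z(f)=cl_{\hat{X}}(X\setminus Z(f))$, because an $\mathcal A$-ultrafilter contains exactly one of $Z(f)$ and its complement; thus every basic closed set $\bar A$ is in fact clopen, so $\{\hat X\setminus\bar A : A\in\mathcal A\}$ is a base of clopen sets for the topology of $\hat X$, which together with Hausdorffness gives that $\hat X$ is compact Hausdorff and zero-dimensional.

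Since the theorem is in essence a repackaging of facts established en route, I do not anticipate a genuine obstacle. The one point meriting a moment's care is the claim that an $\mathcal A$-ultrafilter $\mathcal U^p$ contains exactly one of a measurable set $A$ and its complement $X\setminus A$: this follows from maximality of $\mathcal U^p$ with respect to the finite intersection property (if $A\notin\mathcal U^p$ then, by maximality, some member of $\mathcal U^p$ is disjoint from $A$, forcing $X\setminus A\in\mathcal U^p$, while $A$ and $X\setminus A$ cannot both lie in $\mathcal U^p$ since their intersection is empty). Everything else is either a direct transcription of the pre-theorem computation or a standard property of structure spaces of commutative rings.
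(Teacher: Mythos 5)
Your proposal is correct and follows essentially the same route as the paper: the paper's ``proof'' is precisely the pre-theorem discussion you consolidate (the bijection from Theorem~\ref{thm:correspondence}, the computation $\psi(\mathcal{B}_f)=\overline{Z(f)}$, surjectivity of $f\mapsto Z(f)$ onto $\mathcal{A}$, and the clopen observation for zero-dimensionality). You merely fill in details the paper leaves implicit (the finite-intersection-property argument for compactness of $\hat{X}$ and the fact that an $\mathcal{A}$-ultrafilter contains exactly one of $A$ and $X\setminus A$), all of which are accurate.
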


Let us write for each $p\in \hat{X}$, $Z^{-1}[\mathcal{U}^p]=M^p$. Thus $\{ M^p: p\in \hat{X}\}$ is the complete list of maximal ideals of $\mathcal{M}(X,\mathcal{A})$. The following theorem is an analog of the Gelfand-Kalmogoroff theorem in rings of continuous functions for the maximal ideals of $\mathcal{M}(X,\mathcal{A})$.  It is a consequence of the arguments above.

\begin{theorem}\label{13}
	For each $p\in \hat{X}$, $M^p=\{ f\in \mathcal{M}(X,\mathcal{A}): p\in cl_{\hat{X}} Z(f)\}$. 
\end{theorem}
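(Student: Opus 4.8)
The plan is to unwind the definition of $M^p$ and then invoke the three-line equivalence already displayed just before the statement. Recall that, by construction, $M^p=Z^{-1}[\mathcal{U}^p]=\{f\in\mathcal{M}(X,\mathcal{A}):Z(f)\in\mathcal{U}^p\}$, and by Theorem~\ref{thm:correspondence} this is a maximal ideal. So the first step is to locate $M^p$ under the bijection $\psi$ of Theorem~\ref{12}: applying the identity $ZZ^{-1}[\mathfrak{F}]=\mathfrak{F}$ from \eqref{eq:zzinv} with $\mathfrak{F}=\mathcal{U}^p$, we obtain $Z[M^p]=\mathcal{U}^p$, which says precisely that $\psi(M^p)=p$.

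Next, for an arbitrary $f\in\mathcal{M}(X,\mathcal{A})$, I would run the equivalence chain exhibited in the text with $M=M^p$: $f\in M^p$ iff $Z(f)\in Z[M^p]$ (Theorem~\ref{6}, together with Proposition~\ref{5}), iff $Z(f)\in\mathcal{U}^p$ (by the previous paragraph), iff $p\in\overline{Z(f)}$ (by the very definition $\bar{A}=\{q\in\hat{X}:A\in\mathcal{U}^q\}$), iff $p\in cl_{\hat{X}}Z(f)$ (Theorem~\ref{11}). Chaining these equivalences yields the asserted set equality $M^p=\{f\in\mathcal{M}(X,\mathcal{A}):p\in cl_{\hat{X}}Z(f)\}$.

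Since each link in this chain has already been established in the preceding results, there is essentially no obstacle: the theorem is a repackaging of Theorems~\ref{6}, \ref{11}, \ref{12} and the identity \eqref{eq:zzinv}, exactly as the sentence preceding the statement indicates. The only point requiring a moment's care is the bookkeeping of the index $p$ — one must ensure that the $p$ labelling the $\mathcal{A}$-ultrafilter $\mathcal{U}^p$ is the same $p$ produced by $\psi(M^p)$ — and this is confirmed by the computation $Z[M^p]=\mathcal{U}^p$, which pins down $\psi(M^p)=p$ via the well-definedness of the indexing $p\mapsto\mathcal{U}^p$ on $\hat{X}$.
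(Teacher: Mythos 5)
Your proposal is correct and matches the paper's own (implicit) argument: the paper states Theorem \ref{13} as ``a consequence of the arguments above,'' namely the displayed equivalence chain $f\in M \Leftrightarrow Z(f)\in Z[M] \Leftrightarrow Z(f)\in\mathcal{U}^p \Leftrightarrow p\in cl_{\hat{X}}Z(f)$ applied to $M=M^p$. Your additional check that $Z[M^p]=\mathcal{U}^p$ via \eqref{eq:zzinv}, so that $\psi(M^p)=p$, is exactly the bookkeeping the paper leaves tacit, and nothing further is needed.
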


Our next goal is to characterize those measurable spaces $(X,\mathcal{A})$ for which the $\sigma$-algebras are finite in terms of the ideals of the ring $\mathcal{M}(X,\mathcal{A})$. 
But first we need the following subsidiary result.

\begin{lemma}\label{14}
Let $\mathcal{A}$ be an infinite $\sigma$-algebra on $X$. Then there exists a countably infinite family $\{E_n\}_{n=1}^\infty$ of pairwise disjoint nonempty members of $\mathcal{A}$.
\end{lemma}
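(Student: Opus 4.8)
The plan is to use only the Boolean-algebra structure of $\mathcal{A}$ (finite unions, intersections, complements) and to build the $E_n$ by a recursive construction rather than by trying to exhibit atoms of $\mathcal{A}$. Call a member $B\in\mathcal{A}$ \emph{large} if the collection $\mathcal{A}_B:=\{C\in\mathcal{A}:C\subseteq B\}$ is infinite; since $\mathcal{A}$ is infinite, $X$ itself is large.

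The heart of the argument is a splitting claim: \emph{if $B\in\mathcal{A}$ is large, then $B$ can be written as a disjoint union $B=E\sqcup B'$ with $E,B'\in\mathcal{A}$, $E\neq\emptyset$, and $B'$ large.} To prove it, note that since $\mathcal{A}_B$ is infinite it contains some $C$ with $\emptyset\neq C\subsetneq B$; then $B=C\sqcup(B\setminus C)$, and every $D\in\mathcal{A}$ with $D\subseteq B$ is determined by the pair $(D\cap C,\,D\cap(B\setminus C))$, so if both $\mathcal{A}_C$ and $\mathcal{A}_{B\setminus C}$ were finite then $\mathcal{A}_B$ would be finite too. Hence at least one of $\mathcal{A}_C,\mathcal{A}_{B\setminus C}$ is infinite. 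If $\mathcal{A}_C$ is infinite, put $B'=C$ and $E=B\setminus C$ (nonempty because $C\subsetneq B$); otherwise put $B'=B\setminus C$ and $E=C$. In either case $B=E\sqcup B'$ with $E\neq\emptyset$ and $B'$ large.

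Starting from $B_0=X$ and applying the claim repeatedly (this invokes dependent choice, which is unproblematic here) yields a strictly decreasing chain $X=B_0\supsetneq B_1\supsetneq B_2\supsetneq\cdots$ in $\mathcal{A}$ together with nonempty members $E_n:=B_{n-1}\setminus B_n\in\mathcal{A}$, $n\ge 1$. These are pairwise disjoint: for $n<m$ we have $E_m\subseteq B_{m-1}\subseteq B_n$ while $E_n\cap B_n=\emptyset$, so $E_n\cap E_m=\emptyset$. Thus $\{E_n\}_{n=1}^\infty$ is the desired countably infinite family of pairwise disjoint nonempty measurable sets.

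I expect the only real subtlety to be the splitting claim, and specifically the need to avoid the naive ``pass to atoms'' strategy: the intersection of all measurable sets containing a given point of $X$ need not lie in $\mathcal{A}$, since $\mathcal{A}$ is closed only under countable operations, so the finite-Boolean-algebra style argument is unavailable and the recursive ``always retain a large piece'' device is what takes its place.
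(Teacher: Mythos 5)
Your proof is correct, but it takes a genuinely different route from the paper's. The paper argues via \emph{atoms} of $\mathcal{A}$ (nonempty measurable sets properly containing no nonempty measurable set): if there are infinitely many atoms they are already pairwise disjoint; otherwise one picks a nonempty measurable set $B_0$ disjoint from the finitely many atoms, notes that no nonempty measurable subset of $B_0$ can be an atom, and recursively builds a strictly decreasing chain $B_0\supsetneq B_1\supsetneq\cdots$, setting $E_n=B_n\setminus B_{n+1}$. You reach the same kind of chain, and the same difference sets, but your engine for continuing the recursion is the ``large set'' invariant together with the counting observation that $D\mapsto\bigl(D\cap C,\,D\cap(B\setminus C)\bigr)$ injects $\mathcal{A}_B$ into $\mathcal{A}_C\times\mathcal{A}_{B\setminus C}$, so a large set must split off a nonempty piece while leaving a large remainder. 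Your version buys uniformity: there is no case split on the number of atoms, and you never need to produce a nonempty measurable set avoiding all atoms --- a step the paper asserts without justification (it holds because if $X$ were a finite union of atoms then $\mathcal{A}$ would be finite, but this is left implicit). The paper's version buys contact with the standard atomic/non-atomic dichotomy for $\sigma$-algebras, which is conceptually informative even if slightly longer to make airtight. Both arguments use dependent choice, and your closing remark is apt: since $\mathcal{A}$ is closed only under countable operations, one cannot define an atom at a point as the intersection of all measurable sets containing it, which is exactly why neither proof can proceed as in a finite Boolean algebra.
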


\begin{proof}
An element $E\in \mathcal{A}$ is called an atom if $E\neq \emptyset$ and $E$ does not properly contain any nonempty member of $\mathcal{A}$. If there are infinitely many atoms of $\mathcal{A}$, then there is no more to prove because any two distinct atoms are pairwise disjoint. Assume therefore that there are only finitely many atoms of $\mathcal{A}$, say $A_1, A_2, A_3, \dotsc, A_n$. Let $A=\cup_{i=1}^n A_i$. We choose any nonempty set $B_0$ from $\mathcal{A}$, such that $B_0\cap A=\emptyset$. Since $B_0$ is not an atom, we can choose a nonempty set $B_1$ from $\mathcal{A}$ such that $B_1\subsetneq B_1$. We continue the process and having chosen $B_n$, let $B_{n+1}$ be a strictly smaller member of $\mathcal{A}\setminus \{\emptyset\}$, contained in $B_n$. In this way by induction, we construct a strictly decreasing chain $\{B_n\}_{n=0}^\infty$ of members of $\mathcal{A}$. Finally for each $n$, let $E_n=B_n\setminus B_{n+1}$. Then $\{E_n:n=0, 1, 2, \dotsc\}$ is a pairwise disjoint family of nonempty members of $\mathcal{A}$.
\end{proof}

In light of Lemma~\ref{14}, the notion of a $\sigma$-algebra being compact from \cite{EMY2018} (the collection of elements whose join is the top element has a finite subcollection whose join is the top element) is the equivalent to a $\sigma$-algebra being finite.
The following theorem is then an extension of \cite[Proposition 15]{EMY2018} with more equivalences and an alternative proof.
\begin{theorem}\label{15}
The statements written below are equivalent.

\begin{enumerate}
\renewcommand{\theenumi}{(\roman{enumi})}
\renewcommand{\labelenumi}{\theenumi}
\item \label{thm15item1} $\mathcal{M}(X,\mathcal{A})=\mathcal{M}^*(X,\mathcal{A})=\{f\in \mathcal{M}(X,\mathcal{A}): f \textit{ is bounded on } X\}$.
\item \label{thm15item2} Each ideal of $\mathcal{M}(X,\mathcal{A})$ is fixed.
\item \label{thm15item3} Each maximal ideal of $\mathcal{M}(X,\mathcal{A})$ is fixed.
\item \label{thm15item4} Each ideal of $\mathcal{M}^*(X,\mathcal{A})$ is fixed.
\item \label{thm15item5} Each maximal ideal of $\mathcal{M}^*(X,\mathcal{A})$ is fixed.
\item \label{thm15item6} $\mathcal{A}$ is a finite $\sigma$-algebra on $X$.
\end{enumerate}
\end{theorem}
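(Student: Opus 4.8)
The plan is to prove the cycle of implications $\ref{thm15item1} \Rightarrow \ref{thm15item6} \Rightarrow \ref{thm15item2} \Rightarrow \ref{thm15item3} \Rightarrow \ref{thm15item1}$, and separately weave in $\ref{thm15item4}$ and $\ref{thm15item5}$ via the chain $\ref{thm15item6} \Rightarrow \ref{thm15item4} \Rightarrow \ref{thm15item5}$ and a closing implication $\ref{thm15item5} \Rightarrow \ref{thm15item6}$. The implications $\ref{thm15item2} \Rightarrow \ref{thm15item3}$ and $\ref{thm15item4} \Rightarrow \ref{thm15item5}$ are immediate since maximal ideals are ideals. Several of the remaining steps are routine, so I will only sketch them.

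First I would show $\ref{thm15item1} \Rightarrow \ref{thm15item6}$ in contrapositive form: if $\mathcal{A}$ is infinite, then by Lemma~\ref{14} there is a pairwise disjoint family $\{E_n\}_{n=1}^\infty$ of nonempty members of $\mathcal{A}$; choosing a point $x_n \in E_n$ for each $n$ and using the Pasting Lemma (Theorem~\ref{1}) on the disjoint measurable sets $E_n$ and the complement $X \setminus \bigcup_n E_n$, one builds a measurable function $f$ with $f(x_n) = n$, hence $f \in \mathcal{M}(X,\mathcal{A}) \setminus \mathcal{M}^*(X,\mathcal{A})$. This same construction also delivers $\ref{thm15item5} \Rightarrow \ref{thm15item6}$: from an unbounded $f$ as above (or rather, using $g = 1/(1+|f|) \in \mathcal{M}^*(X,\mathcal{A})$, whose zero set is empty but which is "small" at each $x_n$), one produces a free maximal ideal of $\mathcal{M}^*(X,\mathcal{A})$. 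Concretely, the set $\{h \in \mathcal{M}^*(X,\mathcal{A}) : Z(h) \supseteq E_n \text{ for all but finitely many } n\}$ is a proper ideal (it contains $1 - \chi_{E_n}$-type functions but not the unit), extends to a maximal ideal, and that maximal ideal is free because for each $p \in X$ there is a member of the ideal not vanishing at $p$.

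Next, $\ref{thm15item6} \Rightarrow \ref{thm15item2}$ and $\ref{thm15item6} \Rightarrow \ref{thm15item4}$: if $\mathcal{A}$ is finite, then by the correspondence of Theorem~\ref{thm:correspondence} every $\mathcal{A}$-filter is generated by the intersection of its (finitely many) members, which is a nonempty set since a filter excludes $\emptyset$ and is closed under finite intersection; hence $\bigcap Z[I] \neq \emptyset$ for every ideal $I$ of $\mathcal{M}(X,\mathcal{A})$, i.e.\ every ideal is fixed. The same argument applies verbatim to $\mathcal{M}^*(X,\mathcal{A})$ once one observes that $Z[I]$ is still an $\mathcal{A}$-filter for an ideal $I$ of the bounded ring — which holds because every member of $\mathcal{A}$ is still the zero set of a bounded measurable function (namely a characteristic function). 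Finally, $\ref{thm15item3} \Rightarrow \ref{thm15item1}$: if every maximal ideal is fixed but some $f \in \mathcal{M}(X,\mathcal{A})$ is unbounded, then $\{g : Z(g) \supseteq cZ(f) \cap \{|f| \geq n\} \text{ eventually}\}$ — more cleanly, the ideal generated by $\{(|f| - n) \vee 0 : n \in \mathbb{N}\}$ after noting $1/(1+|f|)$ is a non-unit with empty zero set — is a proper ideal with empty zero-set intersection, hence sits in a free maximal ideal, a contradiction. (Equivalently: $f$ unbounded means $1/(1+|f|) = u$ has $Z(u) = \emptyset$ yet $u$ is not invertible in $\mathcal{M}(X,\mathcal{A})$ since $1/u = 1 + |f| \notin \mathcal{M}^*$... but $1/u \in \mathcal{M}(X,\mathcal{A})$, so $u$ \emph{is} invertible — so instead use that $\{h : \inf_{cZ(h)} |h| \text{-type condition}\}$; the correct statement is the ideal of functions vanishing outside arbitrarily large sublevel sets of $|f|$.)

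The main obstacle is the implication $\ref{thm15item3} \Rightarrow \ref{thm15item1}$, equivalently "unbounded $f$ yields a free maximal ideal," because $\mathcal{M}(X,\mathcal{A})$ is von Neumann regular, so the naive analog of the function-theoretic argument in $C(X)$ needs care: one cannot simply invoke that $1/(1+|f|)$ is a non-unit (it is a unit here). The right move is to consider the family $\mathcal{F} = \{\, X \setminus \{|f| \geq n\} : n \in \mathbb{N}\,\} = \{\, \{|f| < n\} : n \in \mathbb{N}\,\}$; since $f$ is unbounded no member of $\mathcal{F}$ is all of $X$... rather, each member \emph{is} proper in the unbounded case only if $\{|f| \geq n\} \neq \emptyset$ for all $n$, which is exactly unboundedness, and $\bigcap_n \{|f| < n\} = X$ — so that is the wrong filter. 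Instead take the \emph{dual}: the sets $\{|f| \geq n\}$ form a decreasing chain of nonempty measurable sets with empty intersection, generating a free $\mathcal{A}$-filter, which by Theorem~\ref{thm:correspondence} corresponds to a free ideal, extendible to a free maximal ideal. I will need to double-check that this chain indeed has the finite intersection property (it does, being nested and each term nonempty) and empty total intersection (it does, since $f$ is real-valued), which is the crux; everything else is bookkeeping via the established duality.
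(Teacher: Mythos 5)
Your proposal is correct, and it rests on the same two pillars as the paper's proof: Lemma~\ref{14} producing a countable disjoint family $\{E_n\}$ from an infinite $\sigma$-algebra, and the ideal--filter duality used to manufacture free (maximal) ideals from that family. The differences are in the implication graph and in two individual steps. Where the paper dispatches (ii)$\Leftrightarrow$(iii) and (iv)$\Leftrightarrow$(v) in one line via Zorn's Lemma (an ideal contained in a fixed maximal ideal is fixed, since $\bigcap Z[I]\supseteq \bigcap Z[M]$), you instead prove (iii)$\Rightarrow$(i) directly: for unbounded $f$ the superlevel sets $\{|f|\ge n\}$ form a nested family of nonempty measurable sets with empty total intersection, generating a free $\mathcal{A}$-filter whose corresponding ideal extends to a free maximal ideal. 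That argument is correct (it is essentially the paper's Theorem~\ref{21}) but does more work than necessary. Likewise, for the bounded ring you prove (v)$\Rightarrow$(vi) by exhibiting a concrete free maximal ideal of $\mathcal{M}^*(X,\mathcal{A})$ extending $\{h: Z(h)\supseteq E_n$ for all but finitely many $n\}$, whereas the paper goes (iv)$\Rightarrow$(ii) using the units $u_f=\frac{1}{1+|f|}$ to compare $\bigcap_{f\in I}Z(f)$ with $\bigcap_{g\in I\cap\mathcal{M}^*}Z(g)$; your route is more concrete, the paper's is shorter. Two small cautions: your parenthetical justification that $Z[I]$ is an $\mathcal{A}$-filter for an ideal $I$ of $\mathcal{M}^*(X,\mathcal{A})$ (``because every member of $\mathcal{A}$ is the zero set of a bounded function'') does not by itself give closure under supersets in the bounded ring --- in general the witnessing multiplier from Theorem~\ref{2} need not be bounded --- but this is moot in your use of it, since under (vi) every measurable function is simple and $\mathcal{M}=\mathcal{M}^*$; and the several false starts around $\frac{1}{1+|f|}$ should be excised, keeping only the final, correct filter argument.
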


\begin{proof}
$\ref{thm15item1}\Leftrightarrow\ref{thm15item6}$:
If $\ref{thm15item6}$ is false, then by Lemma \ref{14}, there exists a countably infinite family $\{E_n\}_{n=1}^\infty$ of pairwise disjoint nonempty sets in $\mathcal{A}$. The function $f:X\mapsto \mathbb{R}$, given by: $f(E_n)=n$ for $n\in \mathbb{N}$ and $f(X\setminus \cup_{n=1}^\infty E_n)=0$ is clearly an unbounded measurable function by the Pasting Lemma (Theorem \ref{1}). Thus $f\in \mathcal{M}(X,\mathcal{A})\setminus \mathcal{M}^*(X,\mathcal{A})$ and so $\ref{thm15item1}$ is false. 

Conversely, if $(i)$ is false, then there exists a $g\in \mathcal{M}(X,\mathcal{A})$ such that $g\geq 0$ and $g$ is unbounded above on $X$. Consequently there exists a countably infinite set of points $\{x_1, x_2, x_3, \dotsc\}$ in $X$ for which $f(x_1)< f(x_2)< f(x_3)<\dotsb<f(x_n)<\dotsb$. Let $F_n=\{x\in X: f(x)<f(x_{n+1})\}$. Then $F_1\subsetneq F_2\subsetneq \dotsb.$ is a strictly increasing sequence of nonempty members of $\mathcal{A}$. This renders \ref{thm15item6} false.  

$\ref{thm15item2}\Leftrightarrow\ref{thm15item6}$:
It is trivial that $\ref{thm15item6}\Leftrightarrow\ref{thm15item2}$. 
Conversely, if $\ref{thm15item6}$ is false, and $\{E_n\}_{n=1}^\infty$ is the guaranteed collection of pairwise disjoint nonempty sets, then 
\[
I=\{ f\in \mathcal{M}(X,\mathcal{A}): f(E_n)=0\text{ for all but finitely many $n$'s in $\mathbb{N}$}\}
\]
is a free ideal of $\mathcal{M}(X,\mathcal{A})$. Therefore the statement $\ref{thm15item2}$ is false. 

$\ref{thm15item2}\Leftrightarrow\ref{thm15item4}$: If $\ref{thm15item2}$ is true, then $\ref{thm15item4}$ follows from the equivalence of $\ref{thm15item1}$ and $\ref{thm15item2}$. 
Conversely, assume that $\ref{thm15item4}$ is true and $I$ is an ideal of $\mathcal{M}(X,\mathcal{A})$. then $I\cap \mathcal{M}^*(X,\mathcal{A})$ is an ideal of $\mathcal{M}^*(X,\mathcal{A})$ and is fixed. Now with each $f$ in $I$, we can associate a multiplicative unit 
\[
u_f = \frac{1}{1+\lvert f\rvert} 
\]
of $\mathcal{M}(X,\mathcal{A})$ such that $u_f \cdot f\in \mathcal{M}^*(X,\mathcal{A})$. 
This implies that $\cap_{f\in I}Z(f)=\cap_{f\in I}Z(u_f \cdot f)\supsetneq \cap_{g\in I\cap \mathcal{M}^*(X,\mathcal{A})} Z(g)\neq \phi$. This prove that $I$ is a fixed ideal of $\mathcal{M}(X,\mathcal{A})$.

Altogether the statements $\ref{thm15item1}$, $\ref{thm15item2}$, $\ref{thm15item4}$, and $\ref{thm15item6}$ are equivalent. 
The equivalence of $\ref{thm15item2}$ and $\ref{thm15item3}$ (respectively $\ref{thm15item4}$ and $\ref{thm15item5}$) follows from Zorn's Lemma. 
\end{proof}

\begin{definition}
For each $g$ in $\mathcal{M}(X,\mathcal{A})$ and each positive unit $u$ of this ring, set $m(g,u)=\{f\in \mathcal{M}(X,\mathcal{A}): \lvert f-g\rvert\leq u\}$. Then there exists a unique topology on $\mathcal{M}(X,\mathcal{A})$ which we call the $m$-topology in which for each $g$, $\{m(g,u): u \textit{ is a positive unit of } \mathcal{M}(X,\mathcal{A})\}$ is a neighbourhood base of it (compare this to \cite[Exercise 2N]{GJ}). 
\end{definition}
It is easy to prove that $\mathcal{M}(X,\mathcal{A})$ with the $m$-topology is a topological ring, by using some routine arguments and the fact that a continuous function of a real-valued measurable function is measurable. 
Furthermore it is not at all difficult to check that the set of all multiplicative units of the ring $\mathcal{M}(X,\mathcal{A})$ is an open set in this $m$-topology. It follows that if $I$ is a (proper) ideal of $\mathcal{M}(X,\mathcal{A})$, then its closure is also a (proper) ideal. 
Thus every maximal ideal is closed.

\begin{theorem}\label{16}
Each ideal in $\mathcal{M}(X,\mathcal{A})$ is closed in the $m$-topology.
\end{theorem}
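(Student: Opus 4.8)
The plan is to prove directly that every ideal $I$ of $\mathcal{M}(X,\mathcal{A})$ coincides with its $m$-closure; this is slightly stronger than the remark preceding the statement, which only records that the closure of a proper ideal is a proper ideal. Fix a function $g$ lying in the $m$-closure of $I$. It suffices to produce some $f\in I$ with $Z(f)\subseteq Z(g)$, because then Theorem~\ref{2} exhibits $g$ as a multiple of $f$, and since $f\in I$ this forces $g\in I$.

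The heart of the matter is choosing the right positive unit with which to probe the closure. I would take
\[
u=\tfrac12\lvert g\rvert+\chi_{Z(g)} .
\]
Since both $Z(g)$ and $cZ(g)=X\setminus Z(g)$ belong to $\mathcal{A}$, the function $u$ is measurable; it equals $\tfrac12\lvert g\rvert$ on $cZ(g)$ and $1$ on $Z(g)$, so $u(x)>0$ for every $x\in X$. Hence $Z(u)=\emptyset$ and $1/u\in\mathcal{M}(X,\mathcal{A})$ (by the Pasting Lemma, or by applying Theorem~\ref{2} to $u$ and the constant function $1$), so $u$ is a positive unit and $m(g,u)$ is a basic $m$-neighbourhood of $g$.

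Because $g$ is in the $m$-closure of $I$, the neighbourhood $m(g,u)$ meets $I$, so there is $f\in I$ with $\lvert f-g\rvert\le u$. Now for any $x\in cZ(g)$ we have $u(x)=\tfrac12\lvert g(x)\rvert$, whence
\[
\lvert f(x)\rvert\ge\lvert g(x)\rvert-\lvert f(x)-g(x)\rvert\ge\tfrac12\lvert g(x)\rvert>0 .
\]
Thus $f$ vanishes nowhere on $cZ(g)$, i.e.\ $Z(f)\subseteq Z(g)$, and Theorem~\ref{2} yields $g=fk$ for some $k\in\mathcal{M}(X,\mathcal{A})$, so $g\in I$. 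This shows $I$ is $m$-closed.

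There is no real obstacle once the unit $u$ has been identified; the only mildly delicate point is verifying that $1/u$ is again measurable, so that $u$ is a genuine unit of $\mathcal{M}(X,\mathcal{A})$, which is immediate from the Pasting Lemma (Theorem~\ref{1}). If a more structural phrasing is preferred, one can replace $\chi_{Z(g)}$ by $1-e$, where $e=\chi_{cZ(g)}\in\mathcal{M}(X,\mathcal{A})$ is the idempotent witnessing von Neumann regularity at $g$, and write $u=\tfrac12\lvert g\rvert+(1-e)$; the computation is unchanged.
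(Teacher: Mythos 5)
Your proof is correct, and it takes a genuinely different route from the paper's. The paper argues structurally: it first notes that the set of units is $m$-open, so the closure of a proper ideal is a proper ideal and hence every maximal ideal is closed; it then uses the identity $I=\{f: f^n\in I \text{ for some } n\}$ together with von Neumann regularity to write $I$ as the intersection of the maximal ideals containing it, and concludes that $I$ is closed as an intersection of closed sets. You instead give a direct, self-contained argument: for $g$ in the $m$-closure of $I$ you probe with the single well-chosen positive unit $u=\tfrac12\lvert g\rvert+\chi_{Z(g)}$, extract $f\in I\cap m(g,u)$, and observe that $\lvert f\rvert\ge\tfrac12\lvert g\rvert>0$ off $Z(g)$, so $Z(f)\subseteq Z(g)$ and Theorem~\ref{2} makes $g$ a multiple of $f$. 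Both proofs ultimately rest on Theorem~\ref{2} (regularity), but yours bypasses the radical/maximal-ideal machinery entirely and makes transparent exactly where the measurable-function structure enters (this is essentially why the analogous statement fails for general $C(X)$ with the $m$-topology: there one cannot conclude from $Z(f)\subseteq Z(g)$ that $g$ is a multiple of $f$). The paper's approach is shorter given the structure theory already in place and reuses the observation about units being open; yours is more elementary and gives a concrete witness for membership of each closure point in $I$. All the small verifications you flag (measurability and invertibility of $u$, the direction of Theorem~\ref{2}) check out.
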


\begin{proof}
By Theorem \ref{5}, we can write 
\begin{align*}
I&=\{f\in \mathcal{M}(X,\mathcal{A}): Z(f)\in Z[I]\}\\
&=\{f\in \mathcal{M}(X,\mathcal{A}): Z(f^n)\in Z[I]\}\\
&=\{f\in \mathcal{M}(X,\mathcal{A}): f^n\in I \textit{ for some } n\in \mathbb{N}\},
\end{align*}
Thus $I$ is the intersection of all prime ideals of $\mathcal{M}(X,\mathcal{A})$ containing it (see \cite[Corollary 0.18]{GJ}). 
As $\mathcal{M}(X,\mathcal{A})$ is Von Neumann regular, each of its prime ideals is maximal, and hence $I$ is the intersection of all maximal ideals containing it. 
As remarked in the comments preceeding the theorem, each maximal ideal of $\mathcal{M}(X,\mathcal{A})$ is closed;
hence $I$ is a closed subset of $\mathcal{M}(X,\mathcal{A})$.
\end{proof}

It was proved by Hewitt in \cite[Theorem 3]{Hewitt1948} that $C(X)$ with the $m$-topology is first-countable if and only if $X$ is pseudocompact.
We know give a characterization for $\mathcal{M}(X,\mathcal{A})$ with the $m$-topology to be first-countable.
\begin{theorem}
The $m$-topology of $\mathcal{M}(X,\mathcal{A})$ is first-countable if and only if $\mathcal{A}$ is finite.
\end{theorem}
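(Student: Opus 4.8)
The plan is to mimic Hewitt's classical argument for $C(X)$, using the characterization of finiteness of $\mathcal{A}$ furnished by Theorem~\ref{15}. One direction is easy: if $\mathcal{A}$ is finite, then by Theorem~\ref{15} we have $\mathcal{M}(X,\mathcal{A})=\mathcal{M}^*(X,\mathcal{A})$, so every $f$ is bounded and the constant functions $\frac{1}{n}$ are positive units; the sets $\{m(g,\tfrac{1}{n}): n\in\mathbb{N}\}$ then form a countable neighbourhood base at $g$ (since for any positive unit $u$ on a finite $\mathcal{A}$, $u$ attains only finitely many values and hence has a positive lower bound, so $\frac{1}{n}\le u$ for large $n$). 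Thus the $m$-topology is first-countable, in fact it coincides with the topology of uniform convergence.

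For the converse I would argue by contraposition: assume $\mathcal{A}$ is infinite and show the $m$-topology is not first-countable at $0$. By Lemma~\ref{14} fix a countably infinite pairwise disjoint family $\{E_n\}_{n=1}^\infty$ of nonempty members of $\mathcal{A}$. Suppose, for contradiction, that $\{m(0,u_k):k\in\mathbb{N}\}$ is a countable neighbourhood base at $0$, where each $u_k$ is a positive unit. For each $k$ pick a point $x_k\in E_k$ and define a positive unit $u$ by setting, via the Pasting Lemma, $u$ to be a sufficiently small positive constant on each $E_k$ — specifically $u\upharpoonright E_k \equiv \tfrac{1}{2}u_k(x_k)$ — and $u\equiv 1$ off $\bigcup_n E_n$; this $u$ is measurable and bounded away from $0$ on no particular set is needed, only that it is a unit, i.e. everywhere nonzero, which holds. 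Then no $m(0,u_k)$ is contained in $m(0,u)$: the function $f_k = u_k(x_k)\,\chi_{E_k}$ lies in $m(0,u_k)$ (indeed $|f_k|\le u_k$ since on $E_k$, $|f_k|=u_k(x_k)$... wait, this requires $u_k(x_k)\le u_k$ on $E_k$, which need not hold) — so instead I would choose $f_k$ supported on $E_k$ with value equal to $\inf_{E_k} u_k$ there, which does lie in $m(0,u_k)$, while on $E_k$ we have $u = \tfrac12 u_k(x_k)$, and by choosing $x_k\in E_k$ with $u_k(x_k)$ close to $\inf_{E_k}u_k$ we arrange $f_k\notin m(0,u)$. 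Hence $m(0,u)$ contains no basic neighbourhood, contradicting first-countability.

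The main obstacle, and the point requiring care, is the inequality bookkeeping in the diagonal construction: a positive unit $u_k$ need not be bounded below by a positive constant on $E_k$, so one cannot simply place a constant function dominated by $u_k$ on $E_k$. The fix is to work with values chosen relative to the actual infimum (or relative to the value of $u_k$ at a well-chosen point of $E_k$) and to make $u$ strictly smaller than that on $E_k$; because the $E_k$ are pairwise disjoint, these infinitely many local choices can be pasted into a single globally measurable, everywhere-positive function $u$ by the Pasting Lemma (Theorem~\ref{1}). Once this is arranged, the standard diagonal escape argument goes through verbatim, and combined with the easy forward direction yields the equivalence.
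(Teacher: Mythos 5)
Your overall strategy is the same as the paper's: the forward direction via Theorem~\ref{15} (every measurable function on a finite $\sigma$-algebra takes finitely many values, so positive units are bounded below and the constants $\tfrac1n$ give a countable base), and the converse via Lemma~\ref{14} and a diagonal escape at $\boldsymbol{0}$. The forward direction is fine. But the diagonal construction you settle on has a genuine gap. You make $u$ a \emph{constant} on each $E_k$ and try to exhibit a witness $f_k$ supported on $E_k$ whose value there is $\inf_{E_k}u_k$. This fails precisely in the case you yourself flag as the obstacle: a positive unit need only be everywhere nonzero, so $\inf_{E_k}u_k$ can equal $0$ (e.g.\ if $E_k$ splits into countably many disjoint measurable pieces on which $u_k$ takes the values $\tfrac1m$). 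In that case your $f_k$ is the zero function, which lies in $m(\boldsymbol{0},u)$ no matter what, and no choice of $x_k$ with $u_k(x_k)$ ``close to the infimum'' rescues the inequality $\inf_{E_k}u_k>\tfrac12 u_k(x_k)$. So the proposed fix does not actually repair the bookkeeping problem it is meant to address.

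The clean repair --- and this is what the paper does --- is to abandon constants altogether: define $u=\tfrac12 u_n$ \emph{as a function} on $A_n$ (and $u=1$ off $\bigcup_n A_n$), which is measurable by the Pasting Lemma and everywhere positive, hence a unit. Then the witness is $\tfrac23 u_n$: it satisfies $\lvert \tfrac23 u_n\rvert\le u_n$ everywhere, so it lies in $m(\boldsymbol{0},u_n)$, while on $A_n$ one has $\tfrac23 u_n>\tfrac12 u_n=u$ pointwise, so it does not lie in $m(\boldsymbol{0},u)$. No infima are involved, so the argument is immune to the degeneracy above. Alternatively, if you insist on keeping $u$ constant on $E_k$, say $u\equiv\tfrac12 u_k(x_k)$ there for an arbitrary $x_k\in E_k$, the witness should be $f_k=\min\bigl(u_k,\,u_k(x_k)\bigr)\chi_{E_k}$, which is dominated by $u_k$ everywhere yet takes the value $u_k(x_k)>\tfrac12u_k(x_k)=u(x_k)$ at $x_k$. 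Either repair completes your argument; as written, the proof is incomplete.
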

\begin{proof}
Let $\mathcal{A}$ be finite.
Then by Theorem~\ref{15}, $\mathcal{M}(X,\mathcal{A}) = \mathcal{M}^*(X,\mathcal{A})$.
So $\mathcal{M}(X,\mathcal{A})$ is a Banach space with the sup norm, and it particular, it is metrizable, and hence its metric topology is first countable.
Furthermore, we observe that the $m$-topology is this norm topology, since if $u>0$ is a unit in $\mathcal{M}^*(X,\mathcal{A})$, then as $\mathcal{M}^*(X,\mathcal{A}) = \mathcal{M}(X,\mathcal{A})$, $1/u$ is in $\mathcal{M}^*(X,\mathcal{A})$, and so there is a $\lambda>0$, such that $u(x)>\lambda$ for all $x\in X$.
Then $m(f,u) \subseteq U(f,\lambda)$, where $U(f,\lambda) = \{g\in \mathcal{M}(x,\mathcal{A}\mid |f(x)-g(x)|\le \lambda\}$ is a closed base element of the norm topology.
Furthermore $U(f,\lambda) = m(f,\boldsymbol{\lambda})$, where $\boldsymbol{\lambda}(x) = \lambda$ for all $x\in X$.

Suppose instead that $\mathcal{A}$ is infinite.
Then by Theorem~\ref{14}, there is a countable family $\{A_n\}_{n\in \mathbb{N}}$ of pairwise disjoint non-empty sets $A_n$ from $\mathcal{A}$.
We claim that $\mathcal{M}(X,\mathcal{A})$ with the $m$-topology is not first-countable at the constant functions $\boldsymbol{0}$.
Suppose toward a contradiction, that $\boldsymbol{0}$ has a countable base $\{m(\boldsymbol{0},u_i)\}_{i\in \mathbb{N}}$, where $u_i(x)>0$ for all $x\in X$.
To obtain a contradiction, we construct a positive unit $u$ in $\mathcal{M}(X,\mathcal{A})$, such that $m(\boldsymbol{0},u_i)\not\subseteq m(\boldsymbol{0},u)$ for any $i\in \mathbb{N}$.
Indeed, let $u:X\to \mathbb{R}$ be defined as follows: 
\[
u(x) = \begin{cases}\frac{1}{2}u_n(x) & \text{if $x\in A_n$ for some $n\in \mathbb{N}$}\\
1 & \text{if $x\in (X-\bigcup_{n=1}^\infty A_n)$}\end{cases}
\]
By the pasting lemma (Lemma~\ref{1}), $u$ is measurable.
But for each $n$, $m(\boldsymbol{0},u_n)\not\subseteq m(\boldsymbol{0},u)$, since $\frac{2}{3}u_n\in m(\boldsymbol{0},u_n)$, but $\frac{2}{3}u_n\not\in m(\boldsymbol{0},u)$.
\end{proof}

\section{Residue class rings of $\mathcal{M}(X,\mathcal{A})$ modulo ideals}
\label{sec:residue}

In this section, we consider the ordering of a quotient ring of measurable functions by an absolutely convex ideal.
In what follows we denote $I(a)$ to be the residue class $I+a$ in $R/I$ which contains $a$.
Also, let $0$ be the the identity element $I$ of $R/I$.
An ideal $I$ of a lattice-ordered ring $R$ is called absolutely convex if whenever $|a|<|b|$ and $b\in I$ then $a\in I$.
We begin by recalling the following 
well-known results (see \cite[\S 5.3]{GJ}). 
\begin{proposition}\label{thm:absConvOrdering}
If $I$ is an absolutely convex ideal in a lattice ordered ring $R$, then 
\begin{enumerate}
\item $R/I$ is a lattice ordered ring, using the following ordering: $I(a) \ge 0$ if there exists an $x\in R$ such that $x\ge 0$ and $a\equiv x\pmod{I}$.
\item $I(a )\ge 0$ if and only if $a \equiv |a|\pmod{I}$
\item $I(|a|) = |I(a)|$ for each $a\in R$.
\end{enumerate}
\end{proposition}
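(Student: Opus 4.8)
The plan is to prove the three parts in the stated order, since (2) and (3) rely on the lattice structure produced in (1). First I would put $P=\{I(a): a\equiv x\pmod I \text{ for some } x\ge 0 \text{ in } R\}$ and check that $P$ is the positive cone of a ring-compatible partial order on $R/I$, namely $I(a)\le I(b)\Leftrightarrow I(b-a)\in P$. That $\le$ is well defined on classes, reflexive, transitive, compatible with addition ($P+P\subseteq P$) and compatible with multiplication ($P\cdot P\subseteq P$, using that a product of non-negative elements is non-negative in the lattice-ordered ring $R$, together with $ab-xy=(a-x)b+x(b-y)\in I$ whenever $a-x,b-y\in I$) is immediate from the definitions. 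The only point that genuinely uses absolute convexity is antisymmetry, i.e.\ $P\cap(-P)=\{0\}$: if $a\equiv x$ and $-a\equiv y$ with $x,y\ge 0$, then $x+y\in I$ and $0\le x\le x+y=|x+y|$, so absolute convexity forces $x\in I$ (the case $x=x+y$ being trivial), whence $I(a)=I(x)=0$.

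Next I would show that $I$ is a sublattice of $R$, which is exactly what lets the lattice operations descend to $R/I$. First, $I$ is closed under $|\cdot|$: if $w\in I$ with $w\ne 0$ then $|w|<|w|+|w|=|w+w|$ and $w+w\in I$, so $|w|\in I$ by absolute convexity (and $|0|=0\in I$). Then the standard contraction inequalities in a lattice-ordered group, $\bigl|(x\vee z)-(y\vee z)\bigr|\le|x-y|$, its dual for $\wedge$, and $\bigl||x|-|y|\bigr|\le|x-y|$, together with closure of $I$ under $|\cdot|$, show that $x\equiv x'\pmod I$ implies $x\vee z\equiv x'\vee z$, $x\wedge z\equiv x'\wedge z$ and $|x|\equiv|x'|$; hence $I(a)\vee I(b):=I(a\vee b)$, $I(a)\wedge I(b):=I(a\wedge b)$ and $|I(a)|:=I(|a|)$ are well defined. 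It then remains to verify that $I(a\vee b)$ is genuinely the supremum of $I(a),I(b)$ for the order $P$: it is an upper bound because $(a\vee b)-a=0\vee(b-a)\ge 0$ (and symmetrically for $b$), and if $I(c)\ge I(a),I(b)$, say $c-a\equiv p\ge 0$ and $c-b\equiv q\ge 0$, then $c-(a\vee b)=(c-a)\wedge(c-b)\equiv p\wedge q\ge 0$, so $I(c)\ge I(a\vee b)$; dually $I(a\wedge b)$ is the infimum. Together with the compatibility already checked, this yields that $R/I$ is a lattice-ordered ring, which is part (1).

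Parts (2) and (3) then fall out quickly. For (2): if $a\equiv|a|\pmod I$ then $I(a)\ge 0$ directly from the definition of $P$ since $|a|\ge 0$; conversely, if $a\equiv x$ with $x\ge 0$ then $a-x\in I$, hence $|a|-x=|a|-|x|\in I$ by the well-definedness of $|\cdot|$ modulo $I$, so $a\equiv x\equiv|a|$. For (3): using $|a|=a\vee(-a)$ in $R$ and the descent of $\vee$, $|I(a)|=I(a)\vee(-I(a))=I(a)\vee I(-a)=I\bigl(a\vee(-a)\bigr)=I(|a|)$. I expect the main obstacle to be the middle paragraph, namely proving that $I$ is a sublattice and that the induced operations really compute suprema and infima for $P$; the rest is bookkeeping with lattice-ordered group identities. (One should also watch the exact form of ``absolutely convex'': with the strict inequality used in the definition above, closure of $I$ under $|\cdot|$ still goes through, by passing from $w$ to $w+w$ as indicated.)
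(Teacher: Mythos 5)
Your proof is correct. The paper itself gives no proof of this proposition --- it is recalled as a well-known result with a pointer to \cite[\S 5.3]{GJ} --- and your argument is essentially the standard one from that source: the positive cone descends to $R/I$ (with convexity supplying antisymmetry), the lattice operations descend via the contraction inequalities $\lvert (x\vee z)-(y\vee z)\rvert \le \lvert x-y\rvert$ once $I$ is shown closed under $\lvert\cdot\rvert$, and (2) and (3) then follow by bookkeeping. Your closing remark is also a genuine catch: the paper states absolute convexity with a strict inequality $\lvert a\rvert < \lvert b\rvert$, and your passage from $w$ to $w+w$ is exactly what is needed to recover the usual non-strict form and make the argument go through.
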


Note that $\mathcal{M}(X,\mathcal{A})$ is a lattice-ordered ring with the natural order for each $f,g\in \mathcal{M}(X,\mathcal{A})$,
$f\le g$ if and only if $f(x)\le g(x)$ for all $x\in X$.

\begin{proposition}\label{thm:convex}
Each ideal in $\mathcal{M}(X,\mathcal{A})$ is absolutely convex.
\end{proposition}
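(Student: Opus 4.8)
The plan is to show that if $I$ is an ideal of $\mathcal{M}(X,\mathcal{A})$, $b\in I$, and $|a|\le |b|$ (note absolute convexity only requires $\le$, not strict inequality), then $a\in I$. The key tool is Theorem~\ref{2}: membership in $I$ can be detected via zero-sets, since by Proposition~\ref{5} (and the displayed equation~\eqref{eq:zzinv}) we have $I = Z^{-1}Z[I] = \{f\in\mathcal{M}(X,\mathcal{A}) : Z(f)\in Z[I]\}$. So it suffices to prove that $Z(a)\in Z[I]$, and for that it is enough to find some $g\in I$ with $Z(g)\subseteq Z(a)$, because $Z[I]$ is an $\mathcal{A}$-filter (Theorem~\ref{6}) and hence closed under supersets from $\mathcal{A}$; here $Z(a)\in\mathcal{A}$ since $a$ is measurable.

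The natural candidate is $g=b$ itself. From $|a|\le|b|$ we get $Z(b)\subseteq Z(a)$: if $b(x)=0$ then $|a(x)|\le|b(x)|=0$, so $a(x)=0$. Since $b\in I$ we have $Z(b)\in Z[I]$, and since $Z[I]$ is closed under supersets within $\mathcal{A}$ and $Z(b)\subseteq Z(a)$ with $Z(a)\in\mathcal{A}$, we conclude $Z(a)\in Z[I]$, whence $a\in I$ by the description of $I$ above. Alternatively, one can invoke Theorem~\ref{2} directly: $Z(a)\supseteq Z(b)$ means $a$ is a multiple of $b$, say $a=bh$ with $h\in\mathcal{M}(X,\mathcal{A})$, and then $a\in I$ because $I$ is an ideal and $b\in I$.

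There is essentially no obstacle here; the statement is an immediate consequence of the zero-set calculus already established, in particular of Theorem~\ref{2} together with the fact that every ideal of $\mathcal{M}(X,\mathcal{A})$ is absorbed under multiplication. The only point worth a moment's care is the direction of the zero-set containment: absolute convexity compares $|a|$ with $|b|$ pointwise, and the passage to zero-sets reverses the inclusion, so one must be careful to apply Theorem~\ref{2} with $f=a$ and $g=b$ (i.e. $Z(a)\supseteq Z(b)$ gives $a$ a multiple of $b$), not the other way around. I would write the proof in two or three lines exactly along these lines.
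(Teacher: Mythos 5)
Your argument is correct and is essentially the paper's own proof: the paper also observes that $\lvert f\rvert\le\lvert g\rvert$ with $g\in I$ forces $Z(g)\subseteq Z(f)$, then applies Theorem~\ref{2} to write $f$ as a multiple of $g$ and concludes $f\in I$. Your alternative route via the filter $Z[I]$ being closed under supersets is a harmless repackaging of the same idea, and your remark about the direction of the zero-set inclusion is exactly the one point that needs care.
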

\begin{proof}
If $f,g\in \mathcal{M}(X,\mathcal{A})$, $g\in I$, and $\lvert f\rvert \leq \lvert g\rvert $, then $Z(g)\subseteq Z(f)$; 
consequently by Theorem \ref{2}, $f$ is a multiple of $g$, hence $f\in I$.
\end{proof} 
The following theorem follows immediately from Propositions~\ref{thm:absConvOrdering} and \ref{thm:convex}.
\begin{theorem}\label{17}
If $I$ is an ideal of $\mathcal{M}(X,\mathcal{A})$, then the quotient ring $\mathcal{M}(X,\mathcal{A})/I$ is a lattice ordered ring.
\end{theorem}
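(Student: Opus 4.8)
The plan is to deduce the statement directly by combining the two results that immediately precede it. First I would recall that $\mathcal{M}(X,\mathcal{A})$ is a lattice-ordered ring under the pointwise order $f \le g$ iff $f(x) \le g(x)$ for all $x \in X$, as observed above. Then, by Proposition~\ref{thm:convex}, every ideal $I$ of $\mathcal{M}(X,\mathcal{A})$ is absolutely convex. Hence the hypothesis of Proposition~\ref{thm:absConvOrdering} is satisfied with $R = \mathcal{M}(X,\mathcal{A})$, and part~(1) of that proposition yields at once that $\mathcal{M}(X,\mathcal{A})/I$ is a lattice-ordered ring, the order being given by $I(f) \ge 0$ precisely when $f \equiv g \pmod{I}$ for some $g \in \mathcal{M}(X,\mathcal{A})$ with $g \ge 0$.

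There is no real obstacle here: all of the substantive work has been front-loaded into Propositions~\ref{thm:absConvOrdering} and~\ref{thm:convex}. The one point that merits a sentence is that the order on the quotient must be well-defined and must actually make the quotient a lattice, in particular that $I(|f|)$ depends only on the coset $I(f)$ and equals $|I(f)|$; but this is exactly the content of Proposition~\ref{thm:absConvOrdering}, whose proof rests on the absolute convexity of $I$. Thus the whole proof amounts to checking that the cited hypotheses apply, and no further computation is needed.

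If one preferred a self-contained treatment instead of quoting Proposition~\ref{thm:absConvOrdering}, one would verify by hand, using Theorem~\ref{2} (any $f$ with $|f| \le |g|$ and $g \in I$ lies in $I$), that the set $\{ I(f) : f \equiv g \pmod{I} \text{ for some } g \ge 0 \}$ is a positive cone closed under addition and multiplication and that it induces well-defined lattice operations $I(f) \vee I(g) = I(f \vee g)$ and $I(f) \wedge I(g) = I(f \wedge g)$ on the quotient; but invoking the two propositions is cleaner, and that is the route I would take.
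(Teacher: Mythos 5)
Your proposal is correct and matches the paper exactly: the paper states that Theorem~\ref{17} follows immediately from Propositions~\ref{thm:absConvOrdering} and~\ref{thm:convex}, which is precisely the combination you give. No further commentary is needed.
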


The following theorem provides useful description of non-negative elements of the quotient ring $\mathcal{M}(X,\mathcal{A})/I$.  Compare with \cite[\S5.4(a)]{GJ}.
\begin{theorem}\label{18}
Let $I$ be an ideal of $\mathcal{M}(X,\mathcal{A})$ and $f\in \mathcal{M}(X,\mathcal{A})$. Then $I(f)\geq 0$ if and only if there exists $E\in Z[I]$ such that $f\geq 0$ on $E$. 
\end{theorem}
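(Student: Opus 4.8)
The plan is to prove both implications directly, using the lattice operations on $\mathcal{M}(X,\mathcal{A})$ together with Theorem~\ref{2} and Proposition~\ref{thm:absConvOrdering}. Throughout, write $f^+ = f\vee 0$ and $f^- = (-f)\vee 0$, so that $f = f^+ - f^-$ and both $f^{\pm}$ lie in $\mathcal{M}(X,\mathcal{A})$ (being continuous functions of the measurable function $f$). Also keep in mind the basic zero-set identity $Z(|f|-f) = \{x\in X : f(x)\ge 0\} = Z(f^-)$, which is the only computation needed.

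For the forward direction, suppose $I(f)\ge 0$. By Proposition~\ref{thm:absConvOrdering}(2) this means $f\equiv |f|\pmod{I}$, i.e.\ $g := |f| - f\in I$. Then $E := Z(g)\in Z[I]$ by the definition of $Z[I]$, and $x\in E$ exactly when $|f(x)| = f(x)$, that is, when $f(x)\ge 0$. Hence $f\ge 0$ on $E$, as required.

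For the converse, suppose $E\in Z[I]$ and $f\ge 0$ on $E$. Since $E\in Z[I]$, choose $h\in I$ with $Z(h) = E$. Then $Z(f^-) = \{x\in X : f(x)\ge 0\}\supseteq E = Z(h)$, so by Theorem~\ref{2}, $f^-$ is a multiple of $h$ and therefore $f^-\in I$. Since $f^+ = f + f^-$ and $f^+\ge 0$, we conclude $f\equiv f^+\pmod{I}$ with $f^+\ge 0$; by the definition of the ordering on $\mathcal{M}(X,\mathcal{A})/I$ recorded in Proposition~\ref{thm:absConvOrdering}(1), this says precisely that $I(f)\ge 0$.

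There is no serious obstacle here: the only points requiring care are verifying that the lattice parts $f^{\pm}$ are genuinely measurable and that the zero-set identities above are applied correctly, while the substance is carried entirely by Theorem~\ref{2} (zero-set containment $\Leftrightarrow$ divisibility) and by the already-quoted facts about absolutely convex ideals, which are available since every ideal of $\mathcal{M}(X,\mathcal{A})$ is absolutely convex (Proposition~\ref{thm:convex}).
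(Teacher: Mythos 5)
Your proof is correct and takes essentially the same route as the paper's: both directions turn on identifying $\{x\in X: f(x)\ge 0\}$ as the zero-set of $|f|-f$ (equivalently of $f^-$, since $f-|f|=-2f^-$) and then invoking the order structure from Proposition~\ref{thm:absConvOrdering}. The only cosmetic difference is in the converse, where you deduce $f^-\in I$ via the divisibility criterion of Theorem~\ref{2}, while the paper instead notes $Z(f-|f|)\in Z[I]$ by filter closure and applies Proposition~\ref{5}; the two are interchangeable one-line steps.
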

\begin{proof}
Let $I(f)\ge 0$.
Then it follows from Proposition~\ref{thm:absConvOrdering} that $I(f) =  |I(f)| = I(|f|)$.
Consequently $f-|f|\in I$, and hence $E = Z(f-|f|) \in Z[I]$.
It is clear that $f\ge 0$ on $E$.

Conversely, if $E\in Z[I]$ and $f\ge 0$ on $E$, then it is clear that $E\subseteq Z(f-|f|)$.
Since $Z[I]$ is a $\mathcal{A}$-filter on $X$, it follows that $Z(f-|f|)\in Z[I]$.
Hence by Proposition~\ref{5}, we can write $f-|f|\in I$.
Since $|f|\ge 0$, we have that $I(|f|)\ge 0$.
But by Proposition~\ref{thm:absConvOrdering}, $I(|f|) = I(f)$.
So $I(f)\ge 0$.
\end{proof}
The following result gives a characterization of maximal ideals of $\mathcal{M}(X,\mathcal{A})$.

\begin{theorem}\label{19}
For an ideal $I$ in $\mathcal{M}(X,\mathcal{A})$, the following statements are equivalent:
\begin{enumerate}
\renewcommand{\theenumi}{(\roman{enumi})}
\renewcommand{\labelenumi}{\theenumi}
    \item \label{thm19item1} The ideal $I$ is a maximal ideal of $\mathcal{M}(X,\mathcal{A})$.
    \item \label{thm19item2} Given $f\in \mathcal{M}(X,\mathcal{A})$, there exists $E\in Z[I]$ on which $f$ does not change its sign.
    \item \label{thm19item3} The residue class ring $\mathcal{M}(X,\mathcal{A})/I$ is totally ordered.
\end{enumerate}
\end{theorem}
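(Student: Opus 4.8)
The plan is to prove the cycle $\ref{thm19item1}\Rightarrow\ref{thm19item2}\Rightarrow\ref{thm19item3}\Rightarrow\ref{thm19item1}$, leaning on Theorem~\ref{18} to translate between the order on $\mathcal{M}(X,\mathcal{A})/I$ and the behaviour of functions on members of the $\mathcal{A}$-filter $Z[I]$, and on the duality of Theorem~\ref{thm:correspondence} together with the maximality of $\mathcal{A}$-ultrafilters with respect to the finite intersection property.

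\emph{$\ref{thm19item1}\Rightarrow\ref{thm19item2}$.} Suppose $I$ is maximal, so by Theorem~\ref{thm:correspondence} $Z[I]$ is an $\mathcal{A}$-ultrafilter. Given $f\in\mathcal{M}(X,\mathcal{A})$, write $P=\{x: f(x)\ge 0\}$ and $N=\{x: f(x)\le 0\}$; both are members of $\mathcal{A}$ and $P\cup N=X$. If neither $P$ nor $N$ lies in $Z[I]$, then by ultrafilter-maximality each meets some member of $Z[I]$ emptily — more precisely, there are $E_1,E_2\in Z[I]$ with $E_1\cap P=\emptyset$ and $E_2\cap N=\emptyset$, so $E_1\cap E_2\in Z[I]$ is disjoint from $P\cup N=X$, forcing $\emptyset\in Z[I]$, a contradiction. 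Hence $P\in Z[I]$ or $N\in Z[I]$, and on that set $f$ does not change sign. (The clean way to get $E_1,E_2$ is: since $Z[I]$ is an ultrafilter and $P\notin Z[I]$, the family $Z[I]\cup\{P\}$ generates an $\mathcal{A}$-filter strictly larger than $Z[I]$ only if it has the finite intersection property, so some finite intersection $E_1\cap P=\emptyset$ with $E_1\in Z[I]$; similarly for $N$.)

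\emph{$\ref{thm19item2}\Rightarrow\ref{thm19item3}$.} Let $I(f)\in\mathcal{M}(X,\mathcal{A})/I$ be arbitrary. By \ref{thm19item2} there is $E\in Z[I]$ on which $f$ does not change sign, i.e. either $f\ge 0$ on $E$ or $f\le 0$ on $E$ (equivalently $-f\ge 0$ on $E$). By Theorem~\ref{18} this gives $I(f)\ge 0$ or $I(-f)=-I(f)\ge 0$; hence every element of the quotient is comparable to $0$. Since $\mathcal{M}(X,\mathcal{A})/I$ is already a lattice-ordered ring by Theorem~\ref{17}, comparability of every element with $0$ is exactly total ordering (for $I(f),I(g)$ arbitrary, apply the above to $f-g$).

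\emph{$\ref{thm19item3}\Rightarrow\ref{thm19item1}$.} If $\mathcal{M}(X,\mathcal{A})/I$ is totally ordered, it is in particular a domain: if $I(f)I(g)=0$ with, say, $0\le I(f)\le I(g)$ after replacing by absolute values (using Proposition~\ref{thm:absConvOrdering}(3) and totality), then $0\le I(f)^2\le I(f)I(g)=0$, so $I(f)^2=0$; but $\mathcal{M}(X,\mathcal{A})$ is reduced and $I$ is a $z^\circ$-ideal (radical), hence $\mathcal{M}(X,\mathcal{A})/I$ is reduced, forcing $I(f)=0$. Thus $I$ is prime, and by Corollary~\ref{3} every prime ideal of $\mathcal{M}(X,\mathcal{A})$ is maximal, so $I$ is maximal.

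\emph{Main obstacle.} The only genuinely delicate point is the first implication: converting "$P\notin Z[I]$" into the existence of a concrete $E_1\in Z[I]$ with $E_1\cap P=\emptyset$. This is the measure-theoretic analog of the standard fact about prime $z$-filters versus $z$-ultrafilters, and it hinges on the characterization (noted just after the definition of $\mathcal{A}$-ultrafilter) that $\mathcal{A}$-ultrafilters are exactly the subfamilies of $\mathcal{A}$ maximal with respect to the finite intersection property; one must check that adjoining a set not already in the ultrafilter destroys the finite intersection property, which yields the desired disjoint member. Everything else is a routine application of Theorems~\ref{17} and \ref{18} and Corollary~\ref{3}.
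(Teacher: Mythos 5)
Your proof is correct, but two of the three implications take routes genuinely different from the paper's. For \ref{thm19item1}$\Rightarrow$\ref{thm19item2}, the paper works on the ideal side: it notes the algebraic identity $(f\vee 0)(f\wedge 0)=0$ and uses primeness of the maximal ideal $I$ to conclude $f\vee 0\in I$ or $f\wedge 0\in I$, whose zero-sets are exactly your $N$ and $P$; you instead work on the filter side, using the characterization of $\mathcal{A}$-ultrafilters as families maximal with respect to the finite intersection property to show $P\in Z[I]$ or $N\in Z[I]$ directly. Both are sound (your FIP step is the same device the paper uses in proving Theorem~\ref{7}), and the paper's version is slightly shorter since it needs no appeal to Theorem~\ref{thm:correspondence}. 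The middle implication is identical to the paper's. For \ref{thm19item3}$\Rightarrow$\ref{thm19item1}, the paper again argues concretely with zero-sets: from $I(\lvert g\rvert-\lvert h\rvert)\ge 0$ and Theorem~\ref{18} it gets $E\in Z[I]$ with $E\cap Z(gh)\subseteq Z(h)$, hence $Z(h)\in Z[I]$ and $h\in I$ by Proposition~\ref{5}; you instead prove the quotient is a domain via the general fact that a totally ordered reduced ring has no zero divisors, which requires knowing every ideal of $\mathcal{M}(X,\mathcal{A})$ is radical (available from the computation in the proof of Theorem~\ref{16}, or from the $z^\circ$-ideal remark). Your version is more algebraic and would generalize to any reduced lattice-ordered ring in which prime implies maximal, while the paper's stays self-contained within the zero-set calculus; both correctly finish by Corollary~\ref{3}. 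The only cosmetic issue is your parenthetical in the first implication, which states the maximality argument somewhat backwards ("generates a filter strictly larger \dots only if it has FIP"); the intended and correct reading is that if $Z[I]\cup\{P\}$ had the finite intersection property it would be contained in $Z[I]$ by maximality, so FIP must fail, producing the disjoint member.
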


\begin{proof}
$\ref{thm19item1} \Rightarrow \ref{thm19item2}$: Suppose $\ref{thm19item1}$ holds and let $f\in \mathcal{M}(X,\mathcal{A})$. Then since $(f\vee 0)(f\wedge g)=0$ and each maximal ideal is prime, it follows that $f\vee 0\in I$ or $f\wedge 0\in I$. Consequently $Z(f\vee 0)\in Z[I]$ or $Z(f\wedge 0)\in Z[I]$. We note that $f\leq 0$ on $Z(f\wedge 0)$ and $f\geq 0$ on $Z(f\vee 0)$.

$\ref{thm19item2} \Rightarrow \ref{thm19item3}$: Suppose $\ref{thm19item2}$ is true.  Let $f\in \mathcal{M}(X,\mathcal{A})$. Then there is an $E\in Z[I]$ on which $f\geq 0$ or $f\leq 0$. This implies in view of Theorem \ref{18} that $I(f)\geq 0$ or $I(f)\leq 0$ in $\mathcal{M}(X,\mathcal{A})/I$. Thus $\mathcal{M}(X,\mathcal{A})/I$ is totally ordered.

$\ref{thm19item3}\Rightarrow \ref{thm19item1}$:  Suppose $\ref{thm19item3}$ is true. Let $g, h\in \mathcal{M}(X,\mathcal{A})$ such that $gh\in I$. By the condition $\ref{thm19item3}$, we can write either $I(\lvert g\rvert -\lvert h\rvert )\geq 0$ or $I(\lvert g\rvert -\lvert h\rvert )\leq 0$.  Without loss of generality, $I(\lvert g\rvert -\lvert h\rvert )\geq 0$. It follows from Theorem \ref{18} that there is an $E\in Z[I]$ such that $\lvert g\rvert -\lvert h\rvert \geq 0$ on $E$. This implies that $E\cap Z(g)\subseteq Z(h)$ and hence $E\cap Z(gh)\subseteq Z(h)$. As $E\in Z[I]$ and $gh\in I$, the last relation implies that $Z(h)\in Z[I]$, hence $h\in I$ by Proposition \ref{5}. If we assume that $I(\lvert g\rvert -\lvert h\rvert )\leq 0$, we could have obtained analogously that $g\in I$. Thus either $g\in I$ or $h\in I$. Hence $I$ is a prime ideal and therefore maximal ideal in $\mathcal{M}(X,\mathcal{A})$.
\end{proof}

\begin{definition}
 A totally ordered field $F$ is called archimedian if given $\alpha\in F$, there is an $n\in \mathcal{N}$ such that $n> \alpha$.  Otherwise $F$ is called non-archimedian. 
\end{definition}
So, if $F$ is non archimedian, then there is an element $\alpha\in F$ such that $\alpha >n$ for each $n\in \mathbb{N}$. Such an $\alpha$ is called an infinitely large member of $F$. The reciprocal of an infinitely large member is called an infinitely small member of $F$. Thus a non archimedian totally ordered field is characterized by the presence of infinitely large (equivalently infinitely small) members in it. 

If $M$ is a maximal ideal of $\mathcal{M}(X,\mathcal{A})$ and $\pi:\mathcal{M}(X,\mathcal{A})\to \mathcal{M}(X,\mathcal{A})/M$ given by $f\mapsto M(f)$ is the canonical map, then $M$ and $\mathcal{M}(X,\mathcal{A})/M$ are called real if the set of images of the constant functions under $\pi$ is all of $\mathcal{M}(X,\mathcal{A})/M$ (in which case, $\mathcal{M}(X,\mathcal{A})/M$ is isomorphic to $\mathbb{R}$), and hyperreal otherwise. 
The residue class field $\mathcal{M}(X,\mathcal{A})/M$ is archimedian if and only if it is real, since 
an ordered field is archimedian if and only if it is isomorphic to  subfield of $\mathbb{R}$ (\cite[\S0.21]{GJ}), and 
identity is the only non-zero homomorphism of $\mathbb{R}$ into itself (\cite[\S0.22]{GJ}).

The following result relates infinitely large members in the residue class fields of $\mathcal{M}(X,\mathcal{A})$ modulo hyperreal maximal ideals of $\mathcal{M}(X,\mathcal{A})$ and unbounded functions in $\mathcal{M}(X,\mathcal{A})$.

\begin{theorem}\label{20}
 For a given maximal ideal $M$ in $\mathcal{M}(X,\mathcal{A})$ and an $f\in \mathcal{M}(X,\mathcal{A})$, the following statements are equivalent:
\begin{enumerate}
\renewcommand{\theenumi}{(\roman{enumi})}
\renewcommand{\labelenumi}{\theenumi}
\item \label{thm20item1} $\lvert M(f)\rvert $ is an infinitely large member of the residue class field $\mathcal{M}(X,\mathcal{A})/M$.
\item \label{thm20item2} $f$ is unbounded on every set in the $\mathcal{A}$-ultrafilter $Z[M]$.
\item \label{thm20item3} For each $n\in \mathbb{N}$, $E_n=\{x\in X: \lvert f(x)\rvert \geq n\}\in Z[M]$.  
\end{enumerate}
\end{theorem}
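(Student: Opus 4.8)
The plan is to prove the cycle $\ref{thm20item3}\Rightarrow\ref{thm20item2}\Rightarrow\ref{thm20item1}\Rightarrow\ref{thm20item3}$, since each individual implication is then short and uses only the tools already developed: the duality $I\leftrightarrow Z[I]$ (Theorem~\ref{6}, Proposition~\ref{5}), the fact that $Z[M]$ is an $\mathcal{A}$-ultrafilter, the description of non-negative elements of the quotient (Theorem~\ref{18}), and the characterization of maximal ideals via total ordering (Theorem~\ref{19}). Throughout I will freely write $|M(f)| = M(|f|)$ by Proposition~\ref{thm:absConvOrdering}.

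First, $\ref{thm20item3}\Rightarrow\ref{thm20item2}$: if every $E_n = \{x : |f(x)|\ge n\}$ lies in $Z[M]$, then for any $A\in Z[M]$ and any $n$, the set $A\cap E_n$ is a nonempty member of $Z[M]$ (it is nonempty because $Z[M]$ has the finite intersection property), so $|f|$ takes a value $\ge n$ on $A$; as $n$ is arbitrary, $f$ is unbounded on $A$. Next, $\ref{thm20item2}\Rightarrow\ref{thm20item1}$: suppose $|M(f)|$ were not infinitely large, so $M(|f|)\le M(\boldsymbol n)$ for some $n\in\mathbb{N}$, i.e. $M(\boldsymbol n - |f|)\ge 0$. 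By Theorem~\ref{18} there is $E\in Z[M]$ with $\boldsymbol n - |f|\ge 0$ on $E$, i.e. $|f|\le n$ on $E$, contradicting that $f$ is unbounded on every member of $Z[M]$. (Here I use that $\mathcal{M}(X,\mathcal{A})/M$ is totally ordered, so $|M(f)|\le M(\boldsymbol n)$ or $|M(f)|> M(\boldsymbol n)$ for each $n$, and "not infinitely large" means the former holds for some $n$.)

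Finally, $\ref{thm20item1}\Rightarrow\ref{thm20item3}$: assume $|M(f)|$ is infinitely large, and fix $n\in\mathbb{N}$. Then $M(|f|) > M(\boldsymbol n)$, so $M(|f| - \boldsymbol n) > 0$; in particular $M(|f|-\boldsymbol n)\ge 0$ and $|f|-\boldsymbol n\notin M$. By Theorem~\ref{18} there is $E\in Z[M]$ with $|f|-\boldsymbol n\ge 0$ on $E$, that is, $E\subseteq E_n$. Since $Z[M]$ is an $\mathcal{A}$-filter closed under supersets from $\mathcal{A}$ and $E_n\in\mathcal{A}$, we conclude $E_n\in Z[M]$, as desired.

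I do not anticipate a genuine obstacle here; the work is bookkeeping. The one point requiring a little care is the handling of the phrase "infinitely large": I must make sure that the negation used in $\ref{thm20item2}\Rightarrow\ref{thm20item1}$ is exactly the existence of some natural number bound in the totally ordered field $\mathcal{M}(X,\mathcal{A})/M$, and that comparisons such as $M(|f|)\le M(\boldsymbol n)$ are legitimate — which they are, because $M$ maps each constant function $\boldsymbol n$ to the image of the integer $n$ under the embedding $\mathbb{N}\hookrightarrow\mathcal{M}(X,\mathcal{A})/M$, and the ordering on the quotient is the one supplied by Proposition~\ref{thm:absConvOrdering}. A minor subtlety in $\ref{thm20item1}\Rightarrow\ref{thm20item3}$ is that Theorem~\ref{18} is stated for $I(f)\ge 0$ rather than strict inequality, so I apply it to $|f|-\boldsymbol n$ using only $M(|f|-\boldsymbol n)\ge 0$, which is all that is needed to land $E\subseteq E_n$.
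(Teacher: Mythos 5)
Your proof is correct and rests on the same key ingredients as the paper's: Theorem~\ref{18} applied to $\boldsymbol n-\lvert f\rvert$ and $\lvert f\rvert-\boldsymbol n$, the total order on $\mathcal{M}(X,\mathcal{A})/M$ to negate ``infinitely large,'' and the superset/finite-intersection properties of the $\mathcal{A}$-ultrafilter $Z[M]$. The only difference is organizational — you close the cycle via \ref{thm20item3}$\Rightarrow$\ref{thm20item2} using the finite intersection property, whereas the paper proves \ref{thm20item1}$\Leftrightarrow$\ref{thm20item2} and \ref{thm20item1}$\Leftrightarrow$\ref{thm20item3} separately — which is immaterial.
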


\begin{proof}
$\ref{thm20item1} \Leftrightarrow \ref{thm20item2}$ are equivalent because $\lvert M(f)\rvert$ is not infinitely large means there is an $n\in \mathbb{N}$ such that $\lvert M(f)\rvert \leq M(n)$ ($n$ stands for the constant function with value $n$ on $X$).
By Theorem \ref{18}, this is the case when and only when $\lvert f\rvert \leq n$ on some $E\in Z[M]$.

$\ref{thm20item3}\Rightarrow \ref{thm20item1}$ is a straightforward consequence of Theorem \ref{18}.

$\ref{thm20item1} \Rightarrow \ref{thm20item3}$: If $\ref{thm20item1}$ hold, then $\lvert M(f)\rvert \geq M(n), \forall n\in \mathbb{N}$, i.e.\ $M(\lvert f\rvert)\geq M(n)$ for all $n\in \mathbb{N}$. It follows from Theorem \ref{18} that there is an $E\in Z[M]$ for which $\lvert f\rvert \geq n$ on $E$.
Such an $E$ is contained in $E_n$, and hence $E_n\in Z[M]$. 
\end{proof}

\begin{theorem}\label{21}
An $f\in \mathcal{M}(X,\mathcal{A})$ is unbounded on $X$ if and only if there is a maximal ideal $M$ in $\mathcal{M}(X,\mathcal{A})$ for which $\lvert M(f)\rvert$ is infinitely large in $\mathcal{M}(X,\mathcal{M})/M$.
\end{theorem}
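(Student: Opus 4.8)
The plan is to deduce both implications from Theorem~\ref{20}, which already translates ``$\lvert M(f)\rvert$ is infinitely large'' into statements about how $f$ behaves on the sets of the $\mathcal{A}$-ultrafilter $Z[M]$. So essentially all the work has been done; what remains is to produce a single suitable maximal ideal.

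For the ``if'' direction I would argue directly. Suppose $M$ is a maximal ideal with $\lvert M(f)\rvert$ infinitely large, and suppose, for contradiction, that $f$ is bounded, say $\lvert f\rvert\le n$ on $X$ for some $n\in\mathbb{N}$. Then $\lvert f\rvert\le\mathbf{n}$ in $\mathcal{M}(X,\mathcal{A})$ (where $\mathbf{n}$ denotes the constant function $n$), so $M(\lvert f\rvert)\le M(\mathbf{n})$, which contradicts the fact that $\lvert M(f)\rvert=M(\lvert f\rvert)$ exceeds $M(\mathbf{n})$ for every $n\in\mathbb{N}$. (Equivalently, one may invoke $\ref{thm20item1}\Leftrightarrow\ref{thm20item2}$ of Theorem~\ref{20} together with the observation that $X=Z(0)\in Z[M]$, since the zero function lies in every ideal.)

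For the ``only if'' direction, assume $f$ is unbounded on $X$ and, for each $n\in\mathbb{N}$, set $E_n=\{x\in X:\lvert f(x)\rvert\ge n\}$. These are members of $\mathcal{A}$ because $\lvert f\rvert$ is measurable, they are nonempty because $f$ is unbounded, and they form a descending chain $E_1\supseteq E_2\supseteq\cdots$, so the family $\{E_n:n\in\mathbb{N}\}$ has the finite intersection property. Hence, as noted after the definition of $\mathcal{A}$-ultrafilter, it is contained in some $\mathcal{A}$-ultrafilter $\mathcal{U}$ on $X$. Put $M=Z^{-1}[\mathcal{U}]$; by Theorem~\ref{thm:correspondence} this is a maximal ideal of $\mathcal{M}(X,\mathcal{A})$, and by \eqref{eq:zzinv} we have $Z[M]=ZZ^{-1}[\mathcal{U}]=\mathcal{U}$. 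Thus $E_n\in Z[M]$ for every $n\in\mathbb{N}$, which is condition $\ref{thm20item3}$ of Theorem~\ref{20}, and therefore $\lvert M(f)\rvert$ is infinitely large in $\mathcal{M}(X,\mathcal{A})/M$.

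I do not expect a genuine obstacle here: the only step needing any thought is the passage from ``$f$ unbounded'' to an $\mathcal{A}$-ultrafilter carrying all the tails $E_n$, and this is immediate from the finite intersection property of a descending chain of nonempty sets together with Zorn's Lemma and the duality of Theorem~\ref{thm:correspondence}.
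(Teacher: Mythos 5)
Your proposal is correct and follows essentially the same route as the paper: for the forward direction you extend the descending chain $\{E_n\}$ to an $\mathcal{A}$-ultrafilter and invoke Theorem~\ref{20}\ref{thm20item3}, exactly as the paper does, and for the converse your parenthetical argument (via $X=Z(0)\in Z[M]$ and Theorem~\ref{20}) is precisely the paper's one-line deduction that $f$ is unbounded on every member of $Z[M]$, hence on $X$. No gaps.
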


\begin{proof}
If $f$ is unbounded on $X$, then $E_n=\{ x\in X: \lvert f(n)\rvert \geq n\}\neq \emptyset$ for each $n\in \mathbb{N}$.
Therefore $\{E_n: n\in \mathbb{N}\}$ is a family of $\mathcal{A}$-measurable sets with the finite intersection property and is therefore extendable to an $\mathcal{A}$-ultrafilter $\mathcal{U}$ on $X$. Clearly $\mathcal{U}=Z[M]$ for a unique maximal ideal $M$ in $\mathcal{M}(X,\mathcal{A})$. Thus $E_n\in Z[M]$ for every $n\in \mathbb{N}$.
Hence by Theorem \ref{20}, $\lvert M(f)\rvert $ is infinitely large.

Conversely if $\lvert M(f)\rvert$ is infinitely large, then by Theorem \ref{20}, $f$ is unbounded on every members in $Z[M]$; in particular, $f$ is unbounded on $X$.
\end{proof}

The following theorem is a measure-theoretic analog of \cite[Theorem 5.14]{GJ}. 
\begin{theorem}\label{23}
For any maximal ideal $M$ in $\mathcal{M}(X,\mathcal{A})$, the following statements are equivalent:
\begin{enumerate}
\renewcommand{\theenumi}{(\roman{enumi})}
\renewcommand{\labelenumi}{\theenumi}
\item $M$ is a real maximal ideal.
\item $Z[M]$ is closed under countable intersection.
\item $Z[M]$ has the countable intersection property.
\end{enumerate}
\end{theorem}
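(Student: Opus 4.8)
The plan is to mimic the classical proof for rings of continuous functions (Gillman–Jerison, Theorem 5.14), adapting the arguments to the measure-theoretic setting, with the cyclic chain of implications (i) $\Rightarrow$ (iii) $\Rightarrow$ (ii) $\Rightarrow$ (i). The implication (ii) $\Rightarrow$ (iii) is immediate, since the countable intersection property follows from closure under countable intersection together with the fact that $\mathcal{A}$-filters exclude $\emptyset$. For (i) $\Rightarrow$ (iii), I would argue by contraposition: suppose $\{Z(f_n)\}_{n=1}^\infty \subseteq Z[M]$ with $\bigcap_n Z(f_n) = \emptyset$. Replacing each $f_n$ by $f_n^2 \wedge 2^{-n}$ (which has the same zero-set, hence still lies in $M$ by Proposition~\ref{5}, and is bounded), I may assume $0 \le f_n \le 2^{-n}$, so that $g = \sum_{n=1}^\infty f_n$ converges uniformly and lies in $\mathcal{M}(X,\mathcal{A})$ (measurability via the Pasting Lemma or a direct argument on partial sums). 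Then $Z(g) = \bigcap_n Z(f_n) = \emptyset$, so $g$ is a unit; yet on each $Z(f_1) \cap \dots \cap Z(f_k) \in Z[M]$ we have $g \le \sum_{n>k} 2^{-n} = 2^{-k}$, so by Theorem~\ref{18}, $M(g) \le M(2^{-k})$ for every $k$, forcing $M(g)$ to be infinitely small and hence $M(1/g) = 1/M(g)$ infinitely large; thus $\mathcal{M}(X,\mathcal{A})/M$ is non-archimedean, i.e.\ hyperreal by the remarks preceding Theorem~\ref{20}.

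For (iii) $\Rightarrow$ (i), I would again argue contrapositively: if $M$ is hyperreal, then by the remark that $\mathcal{M}(X,\mathcal{A})/M$ is non-archimedean, there is an $f$ with $|M(f)|$ infinitely large, so by Theorem~\ref{20} the sets $E_n = \{x : |f(x)| \ge n\}$ all lie in $Z[M]$. But $\bigcap_{n=1}^\infty E_n = \{x : |f(x)| = \infty\} = \emptyset$, so $Z[M]$ fails the countable intersection property. This completes the cycle.

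The main obstacle I anticipate is purely technical rather than conceptual: verifying measurability of the uniformly convergent sum $g = \sum_n f_n$ in step (i) $\Rightarrow$ (iii). Unlike the continuous case, one cannot invoke uniform limits of continuous functions directly, but a pointwise supremum or a uniform limit of measurable functions is again measurable (a standard fact, and each partial sum is measurable as a finite sum), so this goes through; I would state it briefly rather than belabor it. A secondary point requiring a little care is the reduction to bounded $f_n$ via the replacement $f_n \rightsquigarrow f_n^2 \wedge 2^{-n}$: one must note that this preserves the zero-set and invoke Proposition~\ref{5} to keep the new functions inside $M$, and also check the elementary inequality $f_n^2 \wedge 2^{-n} \le 2^{-n}$ which makes the series dominated by a convergent geometric series. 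Everything else is a routine transcription of the Gillman–Jerison argument, with Theorem~\ref{18} playing the role of the order-theoretic bridge between membership of sets in $Z[M]$ and inequalities in the residue field.
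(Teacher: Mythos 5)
Your proposed chain of implications does not close into a cycle, and as a result statement (ii) is never derived from anything. You announce the cycle (i) $\Rightarrow$ (iii) $\Rightarrow$ (ii) $\Rightarrow$ (i), but the three implications you actually supply are (ii) $\Rightarrow$ (iii) (immediate), (i) $\Rightarrow$ (iii) (contrapositively), and (iii) $\Rightarrow$ (i) (contrapositively). That gives (i) $\Leftrightarrow$ (iii) and (ii) $\Rightarrow$ (iii), but no implication terminates at (ii), so you have not shown that a real maximal ideal has $Z[M]$ closed under countable intersection --- the strongest of the three statements and the one that requires the series construction in full force. The gap arises because in your contrapositive of (i) $\Rightarrow$ (iii) you assume the stronger hypothesis $\bigcap_n Z(f_n)=\emptyset$, which is what lets you conclude that $g$ is a unit. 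The paper instead proves (i) $\Rightarrow$ (ii) by assuming only $\bigcap_n Z(f_n)\notin Z[M]$; then $Z(g)=\bigcap_n Z(f_n)\notin Z[M]$ forces $g\notin M$ by the correspondence $Z^{-1}Z[I]=I$ of \eqref{eq:zzinv} (equivalently Proposition~\ref{5}), hence $M(g)$ is strictly positive, and the same estimate $g\le 2^{-k}$ on $Z(f_1)\cap\dotsb\cap Z(f_k)\in Z[M]$ together with Theorem~\ref{18} makes $M(g)$ infinitely small, so $M$ is hyperreal.

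The fix is minimal: either rerun your construction under the weaker hypothesis $\bigcap_n Z(f_n)\notin Z[M]$ (exactly the paper's route), or add the missing arrow (iii) $\Rightarrow$ (ii) directly, which is easy because $Z[M]$ is an $\mathcal{A}$-ultrafilter: for $A=\bigcap_n A_n\in\mathcal{A}$ with each $A_n\in Z[M]$, maximality gives $A\in Z[M]$ or $X\setminus A\in Z[M]$, and the latter together with the $A_n$ would violate the countable intersection property. Everything else in your write-up --- the reduction to bounded summands preserving zero-sets via Proposition~\ref{5}, measurability of the uniform limit, and the (iii) $\Rightarrow$ (i) argument via Theorem~\ref{20} --- matches the paper's proof.
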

\begin{proof}
((i)$\Rightarrow$(ii))
Suppose (ii) is false.
This means that there is a countable collection of functions $(f_n)_{n\in \mathbb{N}}$ in $M$ such that $\bigcap Z(f_n) \not\in Z[M]$.
Then $g$, defined by $g(x) = \sum_{n\in \mathbb{N}} |f_n(x)| \wedge 3^{-n}$ for each $x\in X$, is a member of $\mathcal{M}(X,\mathcal{A})$, since whenever a series of real-valued measurable functions is uniformly convergent, then its limit function is measurable and also real-valued.
Since $g\ge 0$ by construction, it follows that $M(g)\ge 0$.
But $Z(g) = \bigcap Z(f_n)\not\in Z[M]$.
So $g\not\in M$ making $M(g)$ strictly positive.
For each $k\in \mathbb{N}$, $Z(f_1)\cap Z(f_2)\cap \dotsb\cap Z(f_k)\in Z[M]$, and on this set, $g\le \sum_{n=k}^\infty 3^{-n} = 2^{-1}3^{-k}$.
Consequently, by Theorem~\ref{18}, it follows that $M(g) \le M(2^{-1}3^{-k})$ for each $k\in \mathbb{N}$.
Thus $M(g)$ is infinitely small, and $M$ is not real.

((ii)$\Rightarrow$(iii)) is trivial since $\emptyset\not\in Z[M]$.

((iii)$\Rightarrow$(i)) Suppose (i) is false, that is $M$ is hyperreal.
Then $\mathcal{M}(X,\mathcal{A})/M$ is non-archimedean.  
Consequently, there exists $f\in M$, such that $f\ge 0$ and $M(f)$ is infinitely large.
Hence by Theorem~\ref{20}, for each $n\in \mathbb{N}$, $E_n=\{x\in X: \lvert f(x)\rvert \geq n\}\in Z[M]$.
But then $\bigcap Z[M] \subseteq \bigcap E_n = \emptyset$, in which case (iii) is false.
\end{proof}

The following example gives a measurable space with infinite $\sigma$-algebra for which every real maximal ideal is fixed.
\begin{example}\label{ex:fixedrealLebesgue}
Let $\mathcal{A}$ be the $\sigma$-algebra of all Lebesgue measurable sets in $\mathbb{R}$. Then $(\mathbb{R}, \mathcal{A})$ is a measurable space. Let $M$ be a real maximal ideal of $\mathcal{M}(X,\mathcal{A})$.
Let $i\in \mathcal{M}(X,\mathcal{A})$ be the constant function. 
Because $M$ is real, there is an $r\in \mathbb{R}$ such that $M(i)=M(r)$ and hence $Z(i-r)\in Z[M]$. 
But $Z(i-r)$ is a one-point set. 
So $Z[M]$ is fixed, i.e. M is fixed. 
\end{example}

In light of Theorem \ref{23} and Example~\ref{ex:fixedrealLebesgue}, 
a maximal ideal in $\mathcal{M}(\mathbb{R}, \mathcal{A})$ is real if and only if it is fixed.
We will see in Example \ref{ex:realmaxnotfixed} that there exists a measurable space $(X,\mathcal{A})$ such that $\mathcal{M}(X,\mathcal{A})$ has a real free maximal ideal.

\begin{definition}
An $\mathcal{A}$-ultrafilter $\mathcal{U}$ on $X$ is a real $\mathcal{A}$-ultrafilter if it is closed under countable intersection (or equivalently which has countable intersection property).
\end{definition}

Example \ref{ex:fixedrealLebesgue} raises the following questions.
\begin{question}\label{quest:realfixed}
Can we characterize the measurable spaces $(X,\mathcal{A})$ for which each real $\mathcal{A}$-ultrafilter on $X$
 is fixed? 
\end{question}

\begin{question}
If $X$ is a real compact space and $\mathcal{B}(X)$ is the $\sigma $-algebra of all Borel subsets of $X$, does the measure space $(X,\mathcal{B}(X))$ satisfy the property that each real $\mathcal{B}(X)$-ultrafilter on $X$ is fixed? 
\end{question}

\section{ Ideals in intermediate rings of measurable functions}
\label{sec:intermediate}
By an intermediate ring (of measurable functions), we mean any ring $\mathcal{N}(X,\mathcal{A})$ lying between $\mathcal{M}^*(X,\mathcal{A})$ and $\mathcal{M}(X,\mathcal{A})$. Let $\Omega (X,\mathcal{A})$ stand for the aggregate of all these intermediate rings.

\begin{definition}
	Let $E\in \mathcal{A}$. We say that $f\in \mathcal{N}(X,\mathcal{A})$ is $E$-regular if there exist $g\in \mathcal{N}(X,\mathcal{A})$ such that $fg|_{E^c} =1$, where $E^c=X\setminus E$.
\end{definition}

It is clear that $f$ is $E$-regular if and only if $f^2$ is $E$-regular if and only if $\lvert f\rvert$ is $E$-regular.

\begin{definition}
For $f\in \mathcal{N}(X,\mathcal{A})$ define 
\[
\mathcal{Z}_{\mathcal{N}}(f)=\{ E\in \mathcal{A}: f \textit{ is } E^c\textit{-regular}\}
\]
and for any $S\subseteq \mathcal{N}(X,\mathcal{A})$ and any $\mathfrak{F}\subseteq \mathcal{A}$, let 
\begin{align*}
\mathcal{Z}_{\mathcal{N}}[S]=\bigcup_{f\in S}\mathcal{Z}_{\mathcal{N}}(f)\qquad\text{and}\qquad
\mathcal{Z}^{-1}_{\mathcal{N}}[\mathfrak{F}]=\{f\in \mathcal{M}(X,\mathcal{A}): \mathcal{Z}_{\mathcal{N}}(f)\subseteq \mathfrak{F}\}.
\end{align*}
\end{definition} 

The following facts are measure theoretic analogs of results in \cite{PSW} and \cite[Lemma 3.1]{SW2015}.
We omit proofs as they are straightforward.

\begin{theorem}\label{24}
Let $\mathcal{N}(X,\mathcal{A})$ be an intermediate ring of measurable functions.
\begin{enumerate}
\renewcommand{\theenumi}{(\roman{enumi})}
\renewcommand{\labelenumi}{\theenumi}
\item \label{thm24item1} If $I$ is an ideal in $\mathcal{N}(X,\mathcal{A})$, then $\mathcal{Z}_{\mathcal{N}}[I]$ is an $\mathcal{A}$-filter on $X$.
\item \label{thm24item2} For any $\mathcal{A}$-filter $\mathfrak{F}$ on $X$, $I=\mathcal{Z}^{-1}_{\mathcal{N}}[\mathfrak{F}]$ is an ideal in $\mathcal{N}(X,\mathcal{A})$.
\item \label{thm24item3} For $f\in \mathcal{N}(X,\mathcal{A})$, $\cap  \mathcal{Z}_{\mathcal{N}}(f)=Z(f)$.
\end{enumerate}
\end{theorem}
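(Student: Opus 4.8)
The plan is to verify the three assertions directly from the definitions of $E$-regularity and of the operators $\mathcal{Z}_{\mathcal{N}}$, $\mathcal{Z}^{-1}_{\mathcal{N}}$, leaning on the Von-Neumann regularity of $\mathcal{M}(X,\mathcal{A})$ (via Theorem~\ref{2}) and the Pasting Lemma (Theorem~\ref{1}) whenever a measurable function needs to be manufactured from pieces. Throughout, I would keep in mind the reformulation: $E\in\mathcal{Z}_{\mathcal{N}}(f)$ means $f$ is invertible ``off $E$,'' i.e.\ there is $g\in\mathcal{N}(X,\mathcal{A})$ with $fg=1$ on $E^c$; equivalently, on $E^c$ the function $f$ never vanishes and its pointwise reciprocal extends to a member of $\mathcal{N}(X,\mathcal{A})$.

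For \ref{thm24item3} I would argue the two inclusions. If $E\in\mathcal{Z}_{\mathcal{N}}(f)$, then $fg=1$ on $E^c$ forces $f$ to be nonzero on $E^c$, so $Z(f)\subseteq E$; intersecting over all such $E$ gives $\bigcap\mathcal{Z}_{\mathcal{N}}(f)\supseteq Z(f)$. For the reverse inclusion I need to exhibit, for each point $x_0\notin Z(f)$, a member $E$ of $\mathcal{Z}_{\mathcal{N}}(f)$ with $x_0\notin E$: take $E=Z(f)$ itself and check $Z(f)\in\mathcal{Z}_{\mathcal{N}}(f)$, i.e.\ that $f$ is $(Z(f))^c$-regular. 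Define $g$ on $X$ by $g=1/f$ on $cZ(f)$ and $g=0$ on $Z(f)$; by the Pasting Lemma $g\in\mathcal{M}(X,\mathcal{A})$, and since $g$ is bounded by nothing in general I instead use the bounded modification argument—actually $g$ need not be bounded, so to land inside $\mathcal{N}(X,\mathcal{A})$ I observe $g\in\mathcal{M}(X,\mathcal{A})$ and $fg=1$ on $cZ(f)=(Z(f))^c$; but $\mathcal{N}(X,\mathcal{A})\subseteq\mathcal{M}(X,\mathcal{A})$ only gives containment the wrong way. The clean fix is to note $\mathcal{Z}_{\mathcal{N}}(f)=\mathcal{Z}_{\mathcal{N}}(f^2)=\mathcal{Z}_{\mathcal{N}}(|f|)$ (stated just before the theorem) and work with $h=\min(|f|,1)\in\mathcal{M}^*(X,\mathcal{A})\subseteq\mathcal{N}(X,\mathcal{A})$, whose zero set is again $Z(f)$ and whose reciprocal off $Z(f)$, truncated, lies in $\mathcal{M}^*(X,\mathcal{A})\subseteq\mathcal{N}(X,\mathcal{A})$; this shows $Z(f)=Z(h)\in\mathcal{Z}_{\mathcal{N}}(h)=\mathcal{Z}_{\mathcal{N}}(f)$, completing \ref{thm24item3}.

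For \ref{thm24item1}: $\mathcal{Z}_{\mathcal{N}}[I]=\bigcup_{f\in I}\mathcal{Z}_{\mathcal{N}}(f)$. It omits $\emptyset$ because $\emptyset\in\mathcal{Z}_{\mathcal{N}}(f)$ would mean $f$ is a unit of $\mathcal{N}(X,\mathcal{A})$, impossible for $f$ in the proper ideal $I$. Upward closure in $\mathcal{A}$ is immediate: if $E\subseteq E'$ and $fg=1$ on $E^c$ then $fg=1$ on $(E')^c\subseteq E^c$, so $E'\in\mathcal{Z}_{\mathcal{N}}(f)$. The substantive point is closure under finite intersection: given $E_1\in\mathcal{Z}_{\mathcal{N}}(f_1)$ and $E_2\in\mathcal{Z}_{\mathcal{N}}(f_2)$ with $f_1,f_2\in I$, I must find $f\in I$ with $E_1\cap E_2\in\mathcal{Z}_{\mathcal{N}}(f)$. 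The natural candidate is $f=f_1^2+f_2^2\in I$: on $(E_1\cap E_2)^c=E_1^c\cup E_2^c$ one shows $f$ is invertible in $\mathcal{N}(X,\mathcal{A})$ by pasting the local inverses—on $E_1^c$ use the witness $g_1$ for $f_1$ to bound $f$ below by a positive unit, similarly on $E_2^c$, then invoke the Pasting Lemma on the two (measurable) pieces $E_1^c$ and $E_2^c$ to assemble a global measurable reciprocal, which is bounded on $(E_1\cap E_2)^c$ hence, after truncation, lies in $\mathcal{M}^*(X,\mathcal{A})\subseteq\mathcal{N}(X,\mathcal{A})$. This is the step I expect to require the most care, since one must check the assembled function is genuinely $\mathcal{N}$-valued and that the truncation does not disturb the identity $fg=1$ on $(E_1\cap E_2)^c$.

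For \ref{thm24item2}: $I=\mathcal{Z}^{-1}_{\mathcal{N}}[\mathfrak{F}]=\{f\in\mathcal{N}(X,\mathcal{A}):\mathcal{Z}_{\mathcal{N}}(f)\subseteq\mathfrak{F}\}$ (intersected with $\mathcal{N}(X,\mathcal{A})$). It is closed under multiplication by arbitrary $h\in\mathcal{N}(X,\mathcal{A})$ because $\mathcal{Z}_{\mathcal{N}}(hf)\supseteq\mathcal{Z}_{\mathcal{N}}(f)$?—no, I need $\mathcal{Z}_{\mathcal{N}}(hf)\subseteq\mathfrak{F}$, and in fact $E\in\mathcal{Z}_{\mathcal{N}}(hf)$ means $hfg=1$ on $E^c$ for some $g\in\mathcal{N}(X,\mathcal{A})$, whence $f\cdot(hg)=1$ on $E^c$, so $E\in\mathcal{Z}_{\mathcal{N}}(f)\subseteq\mathfrak{F}$; thus $hf\in I$. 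Closure under addition is the remaining point: if $f_1,f_2\in I$ and $E\in\mathcal{Z}_{\mathcal{N}}(f_1+f_2)$, I claim $E\in\mathfrak{F}$. Here I would use that $E\in\mathcal{Z}_{\mathcal{N}}(g)$ iff $Z(g)\subseteq E$ and, on $E^c$, the reciprocal of $g$ restricted to $E^c$ extends into $\mathcal{N}$; combined with \ref{thm24item3} and the filter axioms for $\mathfrak{F}$ one reduces to a statement about zero-sets: $Z(f_1)\cap Z(f_2)\subseteq Z(f_1+f_2)\subseteq E$, together with the observation that on $E^c$ the regularity of $f_1+f_2$ propagates. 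The cleanest route is to pass through a member of $\mathfrak{F}$ dominated by $E$: find $E_1\in\mathcal{Z}_{\mathcal{N}}(f_1^2+f_2^2)\subseteq\mathfrak{F}$ (using \ref{thm24item1} applied with $f_1,f_2$ thought of as generating a principal ideal in $\mathcal{N}(X,\mathcal{A})$, or directly) with $E_1\subseteq E$, and conclude $E\in\mathfrak{F}$ by upward closure. Since the excerpt explicitly says the proofs are straightforward and omits them, I would present the above in condensed form, flagging only the pasting-and-truncation construction in \ref{thm24item1} as the one deserving a written line or two.
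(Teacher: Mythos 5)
The paper gives no proof of this theorem (it is explicitly omitted as ``straightforward''), so the only question is whether your argument stands on its own; it does not, because of a recurring error. In part \ref{thm24item3} your claim that $Z(f)\in\mathcal{Z}_{\mathcal{N}}(f)$ is false for a general intermediate ring, and the detour through $h=\min(\lvert f\rvert,1)$ does not save it: the reciprocal of $h$ on $cZ(h)$ is $\max(1/\lvert f\rvert,1)$, which is unbounded whenever $\lvert f\rvert$ takes arbitrarily small positive values, and ``truncating'' it destroys the very identity $hg=1$ on $cZ(h)$ that $E^c$-regularity demands. Concretely, take $X=\mathbb{N}$, $\mathcal{A}=2^{\mathbb{N}}$, $\mathcal{N}=\mathcal{M}^*$, $f(n)=1/n$: then $Z(f)=\emptyset$, yet no bounded $g$ satisfies $fg=1$ on all of $X$, so $Z(f)\notin\mathcal{Z}_{\mathcal{N}}(f)$ (while $\bigcap\mathcal{Z}_{\mathcal{N}}(f)=\emptyset$ still holds, via the sets $E_n=\{x:\lvert f(x)\rvert<1/n\}$). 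Indeed, if $Z(f)\in\mathcal{Z}_{\mathcal{N}}(f)$ always held, then $\mathcal{Z}_{\mathcal{N}}(f)$ would always equal $\langle Z(f)\rangle$ and the containments in \eqref{eq:scriptZscriptZinv} would be equalities, contrary to what the paper emphasizes. The correct argument is the pointwise one you wrote down and then abandoned: for $x_0\notin Z(f)$ set $E=\{x:\lvert f(x)\rvert\le\lvert f(x_0)\rvert/2\}$; on $E^c$ one has $\lvert f\rvert>\lvert f(x_0)\rvert/2$, so $1/f$ restricted to $E^c$ and extended by $0$ is bounded, hence lies in $\mathcal{M}^*(X,\mathcal{A})\subseteq\mathcal{N}(X,\mathcal{A})$, giving $E\in\mathcal{Z}_{\mathcal{N}}(f)$ with $x_0\notin E$.

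The same ``the assembled reciprocal is bounded, truncate it'' move underlies your finite-intersection step in part \ref{thm24item1}, and it is equally unjustified there: the witnesses $g_1,g_2$ need not be bounded, so $1/(f_1^2+f_2^2)$ need not be bounded on $E_1^c\cup E_2^c$. Two clean repairs: replace the candidate by $k=f_1^2g_1^2+f_2^2g_2^2\in I$, which satisfies $k\ge1$ on $(E_1\cap E_2)^c$, so that $1/\max(k,1)\in\mathcal{M}^*(X,\mathcal{A})$ inverts it there; or keep $f_1^2+f_2^2$ and observe that its reciprocal on the union is dominated by $g_1^2+g_2^2\in\mathcal{N}(X,\mathcal{A})$, and any measurable $v$ with $0\le v\le w$ and $w\in\mathcal{N}(X,\mathcal{A})$ lies in $\mathcal{N}(X,\mathcal{A})$ because $v=w\cdot(v/w)$ with $v/w$ bounded. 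In part \ref{thm24item2} the absorption argument is correct, but the additivity sketch is circular as written (you appeal to $f_1^2+f_2^2$ lying in the set being shown to be an ideal); it can be completed by splitting $E^c$ into $\{\lvert f_1g\rvert\ge 1/2\}$ and $\{\lvert f_2g\rvert\ge 1/2\}$, which yields $E_i\in\mathcal{Z}_{\mathcal{N}}(f_i)\subseteq\mathfrak{F}$ with $E_1\cap E_2\subseteq E$, whence $E\in\mathfrak{F}$.
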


For any measurable set $E$, let $\langle E\rangle$ be the principal $\mathcal{A}$-filter whose intersection is $E$.
\begin{lemma}\label{27}
	If $E\in \mathcal{A}$, then there exists $f\in \mathcal{N}(X,\mathcal{A})$ such that $E=Z(f)$ and $\mathcal{Z}_{\mathcal{N}}(f)=\langle Z(f)\rangle$.
\end{lemma}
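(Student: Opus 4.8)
The plan is to produce the required $f$ by hand as a characteristic function. Put $f=\chi_{E^c}$, the characteristic function of $E^c=X\setminus E$. Then $f$ is a bounded $\mathcal{A}$-measurable function, so $f\in\mathcal{M}^*(X,\mathcal{A})\subseteq\mathcal{N}(X,\mathcal{A})$, and $f(x)=0$ precisely when $x\in E$, so $Z(f)=E$. Thus everything reduces to verifying that $\mathcal{Z}_{\mathcal{N}}(f)$ coincides with $\langle Z(f)\rangle=\langle E\rangle=\{A\in\mathcal{A}:A\supseteq E\}$.

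I would check the two inclusions separately. For $\mathcal{Z}_{\mathcal{N}}(f)\subseteq\langle E\rangle$ it suffices to note, via Theorem~\ref{24}\ref{thm24item3}, that $\bigcap\mathcal{Z}_{\mathcal{N}}(f)=Z(f)=E$, so that every member of $\mathcal{Z}_{\mathcal{N}}(f)$ already contains $E$ (alternatively, and directly: if a witnessing product equals $1$ on $X\setminus A$, then $f$ cannot vanish on $X\setminus A$, forcing $X\setminus A\subseteq E^c$, i.e. $E\subseteq A$). For the reverse inclusion $\langle E\rangle\subseteq\mathcal{Z}_{\mathcal{N}}(f)$, take $A\in\mathcal{A}$ with $A\supseteq E$; then $X\setminus A\subseteq X\setminus E=E^c$, so $f\equiv 1$ on $X\setminus A$, and hence taking $g$ to be the constant function $1$ (which lies in $\mathcal{N}(X,\mathcal{A})$ since $\mathcal{M}^*(X,\mathcal{A})\subseteq\mathcal{N}(X,\mathcal{A})$) gives a witness showing $A\in\mathcal{Z}_{\mathcal{N}}(f)$. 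Combining the inclusions yields $\mathcal{Z}_{\mathcal{N}}(f)=\langle Z(f)\rangle$, completing the proof.

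There is essentially no obstacle here: the single idea is to use $\chi_{E^c}$, which is legitimate precisely because we work in the measurable category, where characteristic functions of measurable sets are automatically available inside $\mathcal{M}^*(X,\mathcal{A})$ --- in contrast to the continuous-function analog, where constructing a function with a prescribed zero set that is simultaneously invertible off that set takes more effort. The only minor points to be careful about are (i) confirming that the witnessing function $g$ --- here the constant $1$ --- genuinely lies in the given intermediate ring, and (ii) keeping the passage between a set and its complement straight when unwinding the definitions of \emph{$E$-regular} and $\mathcal{Z}_{\mathcal{N}}(f)$.
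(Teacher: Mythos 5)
Your proposal is correct and follows essentially the same route as the paper: both take $f=\chi_{E^c}$, get one inclusion from Theorem~\ref{24}\ref{thm24item3}, and get the other by exhibiting $f$ as invertible off $E$ (the paper infers $\langle E\rangle\subseteq \mathcal{Z}_{\mathcal{N}}(f)$ from $E\in\mathcal{Z}_{\mathcal{N}}(f)$ plus closure under measurable supersets, while you verify each superset directly with the witness $g=1$ --- the same argument made explicit).
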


\begin{proof}
	Take $f=\chi_{E^c}$. Then $Z(f)=E$ and surely $f$ is invertible on $E^c$.
	This means that $E\in \mathcal{Z}_\mathcal{N}(f)$. Hence $\langle E\rangle \subseteq \mathcal{Z}_\mathcal{N}(f)$. Conversely if $F\in \mathcal{Z}_\mathcal{N}(f)$, then $F\supseteq Z(f)$ by Theorem \ref{24}\ref{thm24item3}, which implies $F\in \langle E\rangle$. 
	Thus $\mathcal{Z}_\mathcal{N}(f)\subseteq \langle E\rangle$.
\end{proof}

It is easy to see that for any ideal $I$ in $\mathcal{N}(X,\mathcal{A})$, and any $\mathcal{A}$-filter $\mathfrak{F}$ on $X$, 
\begin{equation}\label{eq:scriptZscriptZinv}
\mathcal{Z}_{\mathcal{N}}^{-1}[\mathcal{Z}_{\mathcal{N}}[I]] \supseteq I\qquad\text{and}\qquad\mathcal{Z}_\mathcal{N}\mathcal{Z}_\mathcal{N}^{-1}[\mathfrak{F}]\subseteq \mathfrak{F}.
\end{equation}
Compare this with \eqref{eq:zzinv} which gives equality when $\mathcal{N}(X,\mathcal{A})$ include all measurable functions.

In an intermediate ring of continuous functions $A(X)$, if $M$ is a maximal ideal, then $\mathcal{Z}_A(M)$ need not be a $z$-ultrafilter on $X$, or even a prime $z$-filter (see \cite[p.~154]{RW1997}).
With intermediate rings of measurable functions, we have the following.
\begin{theorem}\label{28}
Let $\mathcal{N}(X,\mathcal{A})$ be an intermediate ring of measurable functions. Then
\begin{enumerate}
\renewcommand{\theenumi}{(\roman{enumi})}
\renewcommand{\labelenumi}{\theenumi}
\item \label{thm28item1} If $M$ is a maximal ideal in $\mathcal{N}(X,\mathcal{A})$, then $\mathcal{Z}_{\mathcal{N}}[M]$ is an $\mathcal{A}$-ultrafilter on $X$.

\item \label{thm28item2} If $\mathcal{U}$ is an $\mathcal{A}$-ultrafilter on $X$, then $\mathcal{Z}^{-1}_\mathcal{N}[\mathcal{U}]$ is a maximal ideal in $\mathcal{N}(X,\mathcal{A})$.
\end{enumerate}
\end{theorem}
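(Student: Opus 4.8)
The plan is to leverage the duality machinery already established, specifically Theorem~\ref{24}, the containments in~\eqref{eq:scriptZscriptZinv}, and Lemma~\ref{27}, together with the characterization of $\mathcal{A}$-ultrafilters as $\mathcal{A}$-filters maximal with respect to the finite intersection property.

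For part~\ref{thm28item1}, I would start from the fact that $\mathcal{Z}_{\mathcal{N}}[M]$ is an $\mathcal{A}$-filter by Theorem~\ref{24}\ref{thm24item1}, so it suffices to show maximality. Suppose $E\in\mathcal{A}$ meets every member of $\mathcal{Z}_{\mathcal{N}}[M]$ nontrivially; I want to conclude $E\in\mathcal{Z}_{\mathcal{N}}[M]$. Consider the characteristic function $h=\chi_{E}$ (equivalently work with $\chi_{E^c}$ as in Lemma~\ref{27}), which lies in $\mathcal{M}^*(X,\mathcal{A})\subseteq\mathcal{N}(X,\mathcal{A})$. The key step is to show that the ideal $(M,h)$ generated by $M$ and $h$ in $\mathcal{N}(X,\mathcal{A})$ is proper: if it were all of $\mathcal{N}(X,\mathcal{A})$, then $1=m+hg$ for some $m\in M$ and $g\in\mathcal{N}(X,\mathcal{A})$, so $m=1-hg$ equals $1$ on $E^c\cap$ (wherever $h$ vanishes), giving that $m$ is $E^c$-regular off a set forcing $E^c\in\mathcal{Z}_{\mathcal{N}}(m)\subseteq\mathcal{Z}_{\mathcal{N}}[M]$; but $E$ meets every member of $\mathcal{Z}_{\mathcal{N}}[M]$, and $E\cap E^c=\emptyset$, a contradiction. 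Hence $(M,h)$ is proper, so by maximality of $M$ we get $h\in M$, whence $E=Z(\chi_{E^c})\in\mathcal{Z}_{\mathcal{N}}(h)\subseteq\mathcal{Z}_{\mathcal{N}}[M]$ by Lemma~\ref{27}. Thus $\mathcal{Z}_{\mathcal{N}}[M]$ is maximal with respect to the finite intersection property, i.e.\ an $\mathcal{A}$-ultrafilter. The delicate point — and the main obstacle — is handling the local invertibility bookkeeping carefully: from $m=1-hg$ one must identify the precise measurable set on which $m$ is invertible (it is invertible wherever $h=0$, and possibly more), and check that this set genuinely lies in $\mathcal{Z}_{\mathcal{N}}(m)$, which requires an explicit local inverse in $\mathcal{N}(X,\mathcal{A})$ built by a pasting argument as in Theorem~\ref{1}.

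For part~\ref{thm28item2}, let $\mathcal{U}$ be an $\mathcal{A}$-ultrafilter and set $I=\mathcal{Z}^{-1}_{\mathcal{N}}[\mathcal{U}]$, an ideal by Theorem~\ref{24}\ref{thm24item2}. To see it is proper, note $1$ is everywhere invertible so $\mathcal{Z}_{\mathcal{N}}(1)=\{X,\dots\}$ need not be contained in $\mathcal{U}$ only if $\mathcal{Z}_{\mathcal{N}}(1)\ni\emptyset$; in fact $\emptyset\in\mathcal{Z}_{\mathcal{N}}(1)$ since $1$ is $\emptyset^c=X$-regular, and $\emptyset\notin\mathcal{U}$, so $1\notin I$. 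For maximality, extend $I$ to a maximal ideal $M\supseteq I$ in $\mathcal{N}(X,\mathcal{A})$; by part~\ref{thm28item1}, $\mathcal{Z}_{\mathcal{N}}[M]$ is an $\mathcal{A}$-ultrafilter. Using~\eqref{eq:scriptZscriptZinv}, $\mathcal{Z}_{\mathcal{N}}[M]\supseteq\mathcal{Z}_{\mathcal{N}}[I]\supseteq\mathcal{Z}_{\mathcal{N}}\mathcal{Z}_{\mathcal{N}}^{-1}[\mathcal{U}]$, and I would show $\mathcal{Z}_{\mathcal{N}}\mathcal{Z}_{\mathcal{N}}^{-1}[\mathcal{U}]=\mathcal{U}$: the containment $\subseteq$ is~\eqref{eq:scriptZscriptZinv}, while $\supseteq$ uses Lemma~\ref{27}, since for each $E\in\mathcal{U}$ there is $f\in\mathcal{N}(X,\mathcal{A})$ with $\mathcal{Z}_{\mathcal{N}}(f)=\langle E\rangle\subseteq\mathcal{U}$, so $f\in\mathcal{Z}_{\mathcal{N}}^{-1}[\mathcal{U}]$ and $E\in\mathcal{Z}_{\mathcal{N}}(f)\subseteq\mathcal{Z}_{\mathcal{N}}[\mathcal{Z}_{\mathcal{N}}^{-1}[\mathcal{U}]]$. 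Hence $\mathcal{Z}_{\mathcal{N}}[M]\supseteq\mathcal{U}$, and since both are $\mathcal{A}$-ultrafilters, $\mathcal{Z}_{\mathcal{N}}[M]=\mathcal{U}$. Finally, applying $\mathcal{Z}_{\mathcal{N}}^{-1}$ and the left containment of~\eqref{eq:scriptZscriptZinv} gives $M\subseteq\mathcal{Z}_{\mathcal{N}}^{-1}[\mathcal{U}]=I\subseteq M$, so $I=M$ is maximal.

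I expect part~\ref{thm28item1} to carry essentially all the difficulty; part~\ref{thm28item2} is then a formal consequence of it plus the fixed-point identity $\mathcal{Z}_{\mathcal{N}}\mathcal{Z}_{\mathcal{N}}^{-1}[\mathcal{U}]=\mathcal{U}$ for ultrafilters. One should double-check that the pasting construction producing the local inverse in part~\ref{thm28item1} really lands inside the \emph{given} intermediate ring $\mathcal{N}(X,\mathcal{A})$ and not merely in $\mathcal{M}(X,\mathcal{A})$ — this is exactly where the hypothesis $\mathcal{M}^*(X,\mathcal{A})\subseteq\mathcal{N}(X,\mathcal{A})$ is used, since the relevant inverse can be arranged to be bounded (by truncating, using that $f$ being $E^c$-regular lets us work with $\lvert f\rvert\vee 1$ or similar).
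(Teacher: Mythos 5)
Your proposal is correct, but both halves take a genuinely different route from the paper's. For part~\ref{thm28item1}, the paper extends the filter $\mathcal{Z}_{\mathcal{N}}[M]$ to an $\mathcal{A}$-ultrafilter $\mathcal{U}$, invokes Theorem~\ref{24}\ref{thm24item2} together with the maximality of $M$ to conclude $M=\mathcal{Z}_{\mathcal{N}}^{-1}[\mathcal{U}]$, and then pulls each $E\in\mathcal{U}$ back into $\mathcal{Z}_{\mathcal{N}}[M]$ via Lemma~\ref{27}; you instead verify the ultrafilter property head-on by showing that $(M,\chi_{E^c})$ is proper whenever $E$ meets every member of $\mathcal{Z}_{\mathcal{N}}[M]$. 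That works, and the ``delicate'' local-inverse bookkeeping you flag actually evaporates: $1=m+\chi_{E^c}g$ forces $m=1$ on $E$, so the local inverse of $m$ on $E$ is the constant function $1$ (certainly in $\mathcal{N}(X,\mathcal{A})$) and $E^c\in\mathcal{Z}_{\mathcal{N}}(m)\subseteq\mathcal{Z}_{\mathcal{N}}[M]$ at once --- no pasting or truncation is needed; just commit to $h=\chi_{E^c}$ rather than $\chi_E$, since that is the choice for which your stated contradiction ($E\cap E^c=\emptyset$) goes through. For part~\ref{thm28item2}, the paper routes through $\mathcal{M}(X,\mathcal{A})$: it writes $\mathcal{U}=Z[M']$ for a maximal ideal $M'$ of $\mathcal{M}(X,\mathcal{A})$ via Theorem~\ref{thm:correspondence}, extends $M'\cap\mathcal{N}(X,\mathcal{A})$ to a maximal ideal $M$, and derives a contradiction from the unit $f^2h^2+\chi_E\ge 1$, using Theorem~\ref{2} along the way; you instead deduce \ref{thm28item2} formally from \ref{thm28item1} plus the identity $\mathcal{Z}_{\mathcal{N}}\mathcal{Z}_{\mathcal{N}}^{-1}[\mathcal{U}]=\mathcal{U}$ for ultrafilters, which you correctly obtain from Lemma~\ref{27} and \eqref{eq:scriptZscriptZinv}. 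Your version of \ref{thm28item2} is shorter, stays entirely inside $\mathcal{N}(X,\mathcal{A})$, and isolates a reusable fixed-point identity that sharpens \eqref{eq:scriptZscriptZinv}; what the paper's detour buys is the explicit connection between maximal ideals of $\mathcal{N}(X,\mathcal{A})$ and those of $\mathcal{M}(X,\mathcal{A})$, which is of independent interest for the structure-space results that follow.
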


\begin{proof}
\ref{thm28item1}:
Let $M$ be a maximal ideal in $\mathcal{N}(X,\mathcal{A})$.
Then $\mathcal{Z}_{\mathcal{N}}[M]$ is an $\mathcal{A}$-filter on $X$ (by Theorem \ref{24}\ref{thm24item1}). Hence there exists an $\mathcal{A}$-ultrafilter $\mathcal{U}$ on $X$ such that $\mathcal{Z}_{\mathcal{N}}[M]\subseteq \mathcal{U}$. We claim that $\mathcal{Z}_\mathcal{N}[M]=\mathcal{U}$. So let us choose $E\in \mathcal{U}$. Then by Lemma \ref{27}, we can find an $f\in \mathcal{N}(X,\mathcal{A})$ such that $\mathcal{Z}_{\mathcal{N}}(f)=\langle E\rangle =\langle Z(f)\rangle$. By \eqref{eq:scriptZscriptZinv}, we can write $M\subseteq \mathcal{Z}_{\mathcal{N}}^{-1}[\mathcal{Z}_{\mathcal{N}}[M]]\subseteq \mathcal{Z}_{\mathcal{N}}^{-1}[\mathcal{U}]$, and by Theorem \ref{24}\ref{thm24item2}, $\mathcal{Z}_\mathcal{N}^{-1}[\mathcal{U}]$ is an ideal in $\mathcal{N}(X,\mathcal{A})$.  
This implies, in view of the maximality of $M$ and also the properness of the ideal $\mathcal{Z}_\mathcal{N}^{-1}[\mathcal{U}]$, that $M=\mathcal{Z}_{\mathcal{N}}^{-1}[\mathcal{U}]$.  Since $\mathcal{Z}_\mathcal{N}(f)\subseteq \mathcal{U}$, it follows that $f\in \mathcal{Z}_{\mathcal{N}}^{-1}[\mathcal{U}]=M$, and hence $\mathcal{Z}_\mathcal{N}(f)\subseteq \mathcal{Z}_\mathcal{N}[M]$.  Since $\mathcal{Z}_{\mathcal{N}}(f) = \langle E \rangle$, we then have that 
$E\in \mathcal{Z}_\mathcal{N}[M]$. Thus $\mathcal{U}\subseteq \mathcal{Z}_\mathcal{N}[M]$. Hence $\mathcal{Z}_{\mathcal{N}}[M]=\mathcal{U}$.

\ref{thm28item2}: By Theorem \ref{thm:correspondence}, $\mathcal{U}=Z[M']$ for some maximal ideal $M'$ in $\mathcal{M}(X,\mathcal{A})$. It is clear that $M'\cap \mathcal{N}(X,\mathcal{A})$ is a prime ideal in $\mathcal{N}(X,\mathcal{A})$ which is extendable to a maximal ideal $M$ of $\mathcal{N}(X,\mathcal{A})$. Enough to prove that $\mathcal{Z}_{\mathcal{N}}^{-1}[\mathcal{U}]\supseteq M$ (and this implies in view of the maximality of $M$ that $\mathcal{Z}_{\mathcal{N}}^{-1}[\mathcal{U}]= M$). We argue by contradiction and assume that there exists an $f\in M$ such that $f\notin \mathcal{Z}_{\mathcal{N}}^{-1}[\mathcal{U}]$; this means that $\mathcal{Z}_{\mathcal{N}}(f)$ is not contained in $\mathcal{U}$. Consequently there exists $E\in \mathcal{Z}_{\mathcal{N}}(f)$ such that $E\notin \mathcal{U}$. Now $E\in \mathcal{Z}_{\mathcal{N}}(f)$ means that there exists $h\in \mathcal{N}(X,\mathcal{A})$ such that $fh|_{E^c}=1$. On the other hand $E\notin \mathcal{U}$ implies that there exists $k\in M'$ such that $Z(k)\cap E=\emptyset$ and without loss of generality, we can assume that $k$ is bounded on $X$ and therefore $k\in M'\cap \mathcal{N}(X,\mathcal{A})$ and hence $k\in M$. Let $l=\chi_{E}$. Then $Z(l)\supseteq Z(k)$ implies by Theorem \ref{2} that $l$ is a multiple of $k$ in the ring $\mathcal{M}(X,\mathcal{A})$. Since $k\in M'$, which is a maximal ideal in $\mathcal{M}(X,\mathcal{A})$, it follows that $l\in M'$. Also $l$ is bounded on $X$, hence $l\in \mathcal{N}(X,\mathcal{A})$; therefore $l\in M'\cap \mathcal{N}(X,\mathcal{A})$, and consequently $l\in M$. Since $f\in M$, this implies that $f^2h^2+l\in M$ (taking care of the fact that $h\in \mathcal{N}(X,\mathcal{A})$). Finally note that $f^2h^2+l\geq 1$ on $(X,\mathcal{A})$, and therefore $f^2h^2+l\geq 1$ is bounded away from zero on $X$ and hence $f^2h^2+l\geq 1$ is a unit of $\mathcal{N}(X,\mathcal{A})$ --- this is a contradiction.
\end{proof}

Since, we have already established that if $M$ is a maximal ideal of $\mathcal{N}(X,\mathcal{A})$, then $\mathcal{Z}_{\mathcal{N}}[M]$ is an $\mathcal{A}$-ultrafilter on $X$, the following fact is immediate:

\begin{theorem}\label{31}
 The map $\mathcal{Z}_\mathcal{N}: \Max(\mathcal{N})\mapsto  \hat{X}$ described by $M\mapsto \mathcal{Z}_{\mathcal{N}}[M]$ is a bijection from the set of all maximal ideals of $\mathcal{N}(X,\mathcal{A})$ onto the set of all $\mathcal{A}$-ultrafilters on $X$.
\end{theorem}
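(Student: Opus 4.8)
The plan is to show that the map $\mathcal{Z}_\mathcal{N}\colon \Max(\mathcal{N})\to\hat X$, $M\mapsto\mathcal{Z}_\mathcal{N}[M]$, is well-defined, injective, and surjective. Well-definedness is already in hand: by Theorem~\ref{28}\ref{thm28item1}, if $M$ is a maximal ideal of $\mathcal{N}(X,\mathcal{A})$ then $\mathcal{Z}_\mathcal{N}[M]$ is an $\mathcal{A}$-ultrafilter on $X$, hence corresponds to a unique point of $\hat X$. Surjectivity is likewise essentially done: given an $\mathcal{A}$-ultrafilter $\mathcal{U}$ on $X$, Theorem~\ref{28}\ref{thm28item2} tells us $M=\mathcal{Z}_\mathcal{N}^{-1}[\mathcal{U}]$ is a maximal ideal of $\mathcal{N}(X,\mathcal{A})$, and the proof of that part actually shows $\mathcal{Z}_\mathcal{N}^{-1}[\mathcal{U}]$ has $\mathcal{Z}_\mathcal{N}$-image equal to $\mathcal{U}$ --- more directly, one can invoke the argument in the proof of \ref{thm28item1} which establishes $\mathcal{Z}_\mathcal{N}[\mathcal{Z}_\mathcal{N}^{-1}[\mathcal{U}]]=\mathcal{U}$ for any $\mathcal{A}$-ultrafilter $\mathcal{U}$. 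So $\mathcal{Z}_\mathcal{N}\circ\mathcal{Z}_\mathcal{N}^{-1}=\mathrm{id}$ on the set of $\mathcal{A}$-ultrafilters, giving surjectivity.

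The one remaining point is injectivity, and this is where the care is needed. Suppose $M_1,M_2$ are maximal ideals of $\mathcal{N}(X,\mathcal{A})$ with $\mathcal{Z}_\mathcal{N}[M_1]=\mathcal{Z}_\mathcal{N}[M_2]=\mathcal{U}$. By \eqref{eq:scriptZscriptZinv} we have $M_i\subseteq\mathcal{Z}_\mathcal{N}^{-1}[\mathcal{Z}_\mathcal{N}[M_i]]=\mathcal{Z}_\mathcal{N}^{-1}[\mathcal{U}]$ for $i=1,2$. By Theorem~\ref{28}\ref{thm28item2}, $\mathcal{Z}_\mathcal{N}^{-1}[\mathcal{U}]$ is a maximal (in particular proper) ideal of $\mathcal{N}(X,\mathcal{A})$. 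Since $M_i$ is maximal and contained in the proper ideal $\mathcal{Z}_\mathcal{N}^{-1}[\mathcal{U}]$, we conclude $M_i=\mathcal{Z}_\mathcal{N}^{-1}[\mathcal{U}]$ for both $i$, hence $M_1=M_2$. Thus $\mathcal{Z}_\mathcal{N}$ is injective, and altogether it is a bijection; indeed its inverse is precisely $\mathcal{U}\mapsto\mathcal{Z}_\mathcal{N}^{-1}[\mathcal{U}]$.

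I do not expect any serious obstacle here: the theorem is explicitly flagged in the text as ``immediate'' once Theorem~\ref{28} is available, and the entire content is the bookkeeping of combining the two parts of Theorem~\ref{28} with the containment relations \eqref{eq:scriptZscriptZinv}. The only mild subtlety worth spelling out is that the first inclusion in \eqref{eq:scriptZscriptZinv} is not an equality in general (unlike the $\mathcal{M}(X,\mathcal{A})$ case recorded in \eqref{eq:zzinv}), so one must push the containment up to a maximal ideal and use maximality of $\mathcal{Z}_\mathcal{N}^{-1}[\mathcal{U}]$ rather than trying to invert $\mathcal{Z}_\mathcal{N}$ naively; the argument above handles exactly this.
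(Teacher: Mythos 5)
Your proposal is correct and matches the paper's intent: the paper offers no written proof, declaring the result ``immediate'' from Theorem~\ref{28}, and your argument is precisely the bookkeeping the authors leave to the reader --- well-definedness and surjectivity from the two parts of Theorem~\ref{28} together with the second containment in \eqref{eq:scriptZscriptZinv}, and injectivity by pushing $M_i\subseteq\mathcal{Z}_\mathcal{N}^{-1}[\mathcal{U}]$ up to equality via maximality. Your remark that the first containment in \eqref{eq:scriptZscriptZinv} need not be an equality, so one must route the injectivity argument through the maximality of $\mathcal{Z}_\mathcal{N}^{-1}[\mathcal{U}]$, is exactly the right point of care.
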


\begin{theorem}\label{32}
 The structure space $\Max(\mathcal{N})$ of the intermediate ring $\mathcal{N}(X,\mathcal{A})$ is a (compact) Hausdorff space.
\end{theorem}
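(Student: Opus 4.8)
The plan is to mimic the alternative proof of Theorem~\ref{7}, replacing the zero-set ultrafilter $Z[M]$ by the regular-set ultrafilter $\mathcal{Z}_{\mathcal{N}}[M]$ and using the criterion \cite[Exercise 7M4]{GJ}: a commutative ring with unity has Hausdorff structure space whenever, given two distinct maximal ideals $M_1, M_2$, there exist $a\in M_1$, $b\in M_2$ with $ab = 0$. First I would take two distinct maximal ideals $M_1\neq M_2$ of $\mathcal{N}(X,\mathcal{A})$. By Theorem~\ref{31} (equivalently Theorem~\ref{28}\ref{thm28item1}) the $\mathcal{A}$-ultrafilters $\mathcal{Z}_{\mathcal{N}}[M_1]$ and $\mathcal{Z}_{\mathcal{N}}[M_2]$ are distinct, so by maximality with respect to the finite intersection property there exist $E_1\in\mathcal{Z}_{\mathcal{N}}[M_1]$ and $E_2\in\mathcal{Z}_{\mathcal{N}}[M_2]$ with $E_1\cap E_2 = \emptyset$.

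Next I would produce the required pair of functions with product $0$. Using Lemma~\ref{27}, pick $h_1 = \chi_{E_1^c}$ and $h_2 = \chi_{E_2^c}$; these lie in $\mathcal{M}^*(X,\mathcal{A})\subseteq\mathcal{N}(X,\mathcal{A})$, satisfy $Z(h_1) = E_1$, $Z(h_2) = E_2$, and $\mathcal{Z}_{\mathcal{N}}(h_i) = \langle E_i\rangle$. Since $E_1\cap E_2=\emptyset$ we have $E_1^c\cup E_2^c = X$, hence $h_1 h_2 = 0$ pointwise on $X$. It remains to check $h_1\in M_1$ and $h_2\in M_2$: because $\mathcal{Z}_{\mathcal{N}}(h_1) = \langle E_1\rangle \subseteq \mathcal{Z}_{\mathcal{N}}[M_1]$ (as $E_1\in\mathcal{Z}_{\mathcal{N}}[M_1]$ and $\mathcal{Z}_{\mathcal{N}}[M_1]$ is closed under supersets), we get $h_1\in\mathcal{Z}^{-1}_{\mathcal{N}}[\mathcal{Z}_{\mathcal{N}}[M_1]]$. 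Now by the proof of Theorem~\ref{28} (or directly from maximality together with \eqref{eq:scriptZscriptZinv} and properness of $\mathcal{Z}^{-1}_{\mathcal{N}}[\mathcal{Z}_{\mathcal{N}}[M_1]]$), $\mathcal{Z}^{-1}_{\mathcal{N}}[\mathcal{Z}_{\mathcal{N}}[M_1]] = M_1$, so $h_1\in M_1$; symmetrically $h_2\in M_2$. Applying \cite[Exercise 7M4]{GJ} gives that $\Max(\mathcal{N})$ is Hausdorff, and compactness is the standard fact that the structure space of any commutative ring with unity is compact in the hull-kernel topology.

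The only subtle point — the main obstacle — is justifying $\mathcal{Z}^{-1}_{\mathcal{N}}[\mathcal{Z}_{\mathcal{N}}[M]] = M$ for a maximal ideal $M$, since \eqref{eq:scriptZscriptZinv} only gives the inclusion $\supseteq$ in general. This equality, however, was already extracted inside the proof of Theorem~\ref{28}\ref{thm28item1}: there one shows $M = \mathcal{Z}^{-1}_{\mathcal{N}}[\mathcal{U}]$ for the ultrafilter $\mathcal{U} = \mathcal{Z}_{\mathcal{N}}[M]$, using that $\mathcal{Z}^{-1}_{\mathcal{N}}[\mathcal{U}]$ is a proper ideal containing the maximal ideal $M$. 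So I would simply invoke that step rather than re-prove it. Everything else is routine membership bookkeeping with characteristic functions and the filter properties established in Theorem~\ref{24} and Lemma~\ref{27}.
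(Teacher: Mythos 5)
Your overall strategy (pass to the distinct $\mathcal{A}$-ultrafilters, extract disjoint sets $E_1\in\mathcal{Z}_{\mathcal{N}}[M_1]$ and $E_2\in\mathcal{Z}_{\mathcal{N}}[M_2]$, and exhibit two functions with zero product) is the same as the paper's, and your justification of $\mathcal{Z}^{-1}_{\mathcal{N}}[\mathcal{Z}_{\mathcal{N}}[M]]=M$ via the proof of Theorem~\ref{28} is sound. But the final step has a genuine error, in fact two compounding ones. First, the criterion of \cite[Exercise 7M4]{GJ} is misquoted: it requires elements $a\notin M_1$ and $b\notin M_2$ with $ab$ lying in every maximal ideal, not $a\in M_1$ and $b\in M_2$. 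The condition you state is satisfied by $a=b=0$ in any commutative ring, so it cannot possibly imply Hausdorffness; the paper's proofs of both Theorem~\ref{7} and Theorem~\ref{32} are careful to produce elements \emph{outside} the respective maximal ideals. Second, even granting your version of the criterion, the product computation fails: $\chi_{E_1^c}\cdot\chi_{E_2^c}=\chi_{E_1^c\cap E_2^c}=\chi_{(E_1\cup E_2)^c}$, which is zero only when $E_1\cup E_2=X$. Disjointness of $E_1$ and $E_2$ gives $E_1^c\cup E_2^c=X$, which says nothing about the intersection $E_1^c\cap E_2^c$, so in general $h_1h_2\neq 0$.

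The repair is to take complements, which lands you exactly on the paper's argument. Set $k_i=\chi_{E_i}=1-\chi_{E_i^c}\in\mathcal{M}^*(X,\mathcal{A})\subseteq\mathcal{N}(X,\mathcal{A})$. Then $k_1k_2=\chi_{E_1\cap E_2}=0$, and $k_i\notin M_i$: by Lemma~\ref{27} applied to $E=E_i^c$ we have $\mathcal{Z}_{\mathcal{N}}(k_i)=\langle E_i^c\rangle$, so if $k_i$ were in $M_i$ then $E_i^c$ would belong to the filter $\mathcal{Z}_{\mathcal{N}}[M_i]$, which already contains $E_i$, forcing $\emptyset$ into an $\mathcal{A}$-filter. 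Now \cite[Exercise 7M4]{GJ} applies in its correct form. This is precisely what the paper does with $1-ff_1\notin M_1$ and $1-gg_1\notin M_2$, where $ff_1=1$ on $E_1^c$ and $gg_1=1$ on $E_2^c$, so that $(1-ff_1)(1-gg_1)=0$; your $1-\chi_{E_1^c}$ is the special case $f=f_1=\chi_{E_1^c}$.
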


\begin{proof}
The proof is analogous to that of \cite[Theorem 3.6]{RW1997}.
Let $M_1, M_2\in \Max(\mathcal{N})$ with $M_1\neq M_2$. 
By \cite[Exercise 7M4]{GJ}, it suffices to find $h_1, h_2\in \mathcal{N}(X,\mathcal{A})$ such that $h_1\notin M_1$, $h_2\notin M_2$, and $h_1h_2=0$, as $\{0 \}=\cap \Max(\mathcal{N})$. 
We observe that there exist $E_1\in \mathcal{Z}_{\mathcal{N}}[M_1]$ and $E_2\in \mathcal{Z}_{\mathcal{N}}[M_2]$ such that $E_1\cap E_2=\emptyset$, for otherwise $\mathcal{Z}_\mathcal{N}[M_1]\cup \mathcal{Z}_\mathcal{N}[M_2]$ is a family of members of $\mathcal{A}$ with finite intersection property and hence there is an $\mathcal{A}$-filter $\mathcal{F}$ on $X$ such that $\mathcal{Z}_\mathcal{N}[M_1]\cup \mathcal{Z}_\mathcal{N}[M_2]\subseteq \mathcal{F}$; consequently $M_1\cup M_2\subseteq \mathcal{Z}_{\mathcal{N}}^{-1}[\mathcal{F}]$, which is a proper ideal of $\mathcal{N}(X,\mathcal{A})$, contradicting the maximality and distinctness of $M_1$ and $M_2$. 
Now since $E_1\in \mathcal{Z}_{\mathcal{N}}[M_1]$ and $E_2\in \mathcal{Z}_{\mathcal{N}}[M_2]$ with $E_1\cap E_2= \emptyset$, there exists $f\in M_1$ and $g\in M_2$ such that $E_1\in \mathcal{Z}_{\mathcal{N}}(f)$ and $E_2\in \mathcal{Z}_{\mathcal{N}}(g)$. This means that, there exist $f_1, g_1\in \mathcal{N}(X,\mathcal{A})$ such that $ff_1|_{E^c_1}=1$ and $ff_2|_{E^c_2}=1$. Since $ff_1\in M_1$ and $gg_1\in M_2$, it follows that $1-ff_1\notin M_1$ and $1-gg_1\notin M_2$. But we note that $(1-ff_1)(1-gg_1)=0$.
\end{proof} 

\begin{theorem}\label{33}
Let $\mathcal{N}(X,\mathcal{A})$ be an intermediate ring.
Then the bijection $\mathcal{Z}_\mathcal{N}: \Max(\mathcal{N})\mapsto \hat{X}$ by $M\mapsto \mathcal{Z}_{\mathcal{N}}[M]$ is a homeomorphism. 
\end{theorem}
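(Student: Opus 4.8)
The plan is to show that the bijection $\mathcal{Z}_\mathcal{N}\colon \Max(\mathcal{N})\to \hat{X}$ of Theorem~\ref{31} carries the basic closed sets of the hull-kernel topology onto the basic closed sets of the Stone-topology, and conversely; since both spaces are compact Hausdorff (Theorems~\ref{32} and \ref{12}), it suffices to check that $\mathcal{Z}_\mathcal{N}$ is continuous, or equivalently that it is a closed map, and a bijective continuous map between compact Hausdorff spaces is automatically a homeomorphism. So in practice I would identify, for each $f\in \mathcal{N}(X,\mathcal{A})$, the image under $\mathcal{Z}_\mathcal{N}$ of the basic closed set $\mathcal{B}_f=\{M\in \Max(\mathcal{N}): f\in M\}$ of the structure space, and show it equals a basic closed set $\overline{A}$ of $\hat{X}$ for a suitable $A\in \mathcal{A}$.

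The key computation is the following chain of equivalences, valid for $M\in \Max(\mathcal{N})$ and $p\in \hat{X}$ with $\mathcal{Z}_\mathcal{N}[M]=\mathcal{U}^p$. First, by Theorem~\ref{28}\ref{thm28item1} together with the maximality argument already used there, $f\in M$ if and only if $\mathcal{Z}_\mathcal{N}(f)\subseteq \mathcal{Z}_\mathcal{N}[M]$. Now I want to relate $\mathcal{Z}_\mathcal{N}(f)\subseteq \mathcal{U}^p$ to a single set lying in $\mathcal{U}^p$. By Theorem~\ref{24}\ref{thm24item3}, $\cap \mathcal{Z}_\mathcal{N}(f)=Z(f)$; moreover, since $u_f=\frac{1}{1+|f|}$ is a positive unit of $\mathcal{M}(X,\mathcal{A})$, the function $g=u_f\cdot f$ lies in $\mathcal{M}^*(X,\mathcal{A})\subseteq \mathcal{N}(X,\mathcal{A})$, has $Z(g)=Z(f)$, and one checks $\mathcal{Z}_\mathcal{N}(g)=\langle Z(f)\rangle$ (the argument is that of Lemma~\ref{27}, using that $g$ is genuinely invertible off $Z(f)$ within $\mathcal{M}(X,\mathcal{A})$ — more carefully, replace $g$ by $\chi_{Z(f)^c}$ as in Lemma~\ref{27} and note $f\in M \Leftrightarrow Z(f)\in \mathcal{Z}_\mathcal{N}[M]$ since $f$ and $\chi_{Z(f)^c}$ generate the same $\mathcal{A}$-filter of regular sets on $M$ via Proposition~\ref{5}-type reasoning in $\mathcal{N}$). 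Granting this, $f\in M$ if and only if $Z(f)\in \mathcal{Z}_\mathcal{N}[M]=\mathcal{U}^p$, which by definition of the Stone-topology is exactly $p\in \overline{Z(f)}=cl_{\hat{X}}Z(f)$. Hence $\mathcal{Z}_\mathcal{N}(\mathcal{B}_f)=cl_{\hat{X}}Z(f)=\overline{Z(f)}$, and since every $A\in \mathcal{A}$ has the form $Z(\chi_{A^c})$ with $\chi_{A^c}\in \mathcal{M}^*(X,\mathcal{A})\subseteq \mathcal{N}(X,\mathcal{A})$, the sets $\mathcal{B}_f$ map bijectively onto the sets $\overline{A}$.

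From here the topological conclusion is routine. The basic closed sets $\mathcal{B}_f$ generate the hull-kernel topology on $\Max(\mathcal{N})$ and the sets $\overline{A}$ form a base for the closed sets of $\hat{X}$, so $\mathcal{Z}_\mathcal{N}$ and its inverse both pull closed sets back to closed sets, giving a homeomorphism; alternatively, invoke compactness of $\Max(\mathcal{N})$ (Theorem~\ref{32}) and Hausdorffness of $\hat{X}$ (Theorem~\ref{12}) to upgrade the bijective continuous map $\mathcal{Z}_\mathcal{N}^{-1}$, or rather $\mathcal{Z}_\mathcal{N}$ itself once continuity is known, to a homeomorphism.

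The main obstacle is the step $f\in M \Leftrightarrow Z(f)\in \mathcal{Z}_\mathcal{N}[M]$ for a \emph{general} intermediate ring: in an arbitrary $\mathcal{N}(X,\mathcal{A})$ the set $\mathcal{Z}_\mathcal{N}(f)$ can be strictly larger than $\langle Z(f)\rangle$ (cf.\ the remark before Theorem~\ref{28} about intermediate rings of continuous functions), so one cannot simply say "$f\in M$ iff $Z(f)\in \mathcal{Z}_\mathcal{N}[M]$" for the given $f$. The fix, as above, is to pass from $f$ to $\chi_{Z(f)^c}$ (or to $u_f\cdot f$), which lies in $\mathcal{M}^*(X,\mathcal{A})$ and hence in every intermediate ring, and for which Lemma~\ref{27} gives $\mathcal{Z}_\mathcal{N}(\chi_{Z(f)^c})=\langle Z(f)\rangle$ exactly; one then shows $f$ and $\chi_{Z(f)^c}$ lie in the same maximal ideals of $\mathcal{N}(X,\mathcal{A})$ by using Theorem~\ref{28} and the characterization $M=\mathcal{Z}_\mathcal{N}^{-1}[\mathcal{Z}_\mathcal{N}[M]]$ established in its proof. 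Verifying this equivalence carefully, and confirming that the induced map on basic closed sets is genuinely a bijection (injectivity on the level of sets $\overline{A}$ follows from $\overline{A}\cap X=A$, Theorem~\ref{11}), is where the real work lies; everything else is a compactness-Hausdorff formality.
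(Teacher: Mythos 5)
Your central computation fails: for a general $f\in \mathcal{N}(X,\mathcal{A})$ it is \emph{not} true that $f\in M \Leftrightarrow Z(f)\in \mathcal{Z}_{\mathcal{N}}[M]$, and hence $\mathcal{Z}_{\mathcal{N}}(\mathcal{B}_f)$ need not equal $\overline{Z(f)}$. Take $X=\mathbb{N}$, $\mathcal{A}=2^{\mathbb{N}}$, $\mathcal{N}(X,\mathcal{A})=\mathcal{M}^*(X,\mathcal{A})=\ell^\infty$, and $f(n)=1/n$. Then $Z(f)=\emptyset$, so $\overline{Z(f)}=\emptyset$ and $Z(f)$ lies in no $\mathcal{A}$-filter; yet $f$ is a non-unit of $\ell^\infty$ and therefore lies in some maximal ideal, so $\mathcal{B}_f\neq\emptyset$. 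Here $\mathcal{Z}_{\mathcal{N}}(f)$ is the filter of cofinite sets, which is strictly larger than $\langle Z(f)\rangle$, and $\mathcal{Z}_{\mathcal{N}}(\mathcal{B}_f)$ is the set of free ultrafilters on $\mathbb{N}$ --- a closed set that is not of the form $\overline{A}$ for any single $A\in\mathcal{A}$. Your proposed repair is also false in this example: $\chi_{Z(f)^c}=\chi_{\mathbb{N}}=1$ is a unit lying in no maximal ideal, so $f$ and $\chi_{Z(f)^c}$ do \emph{not} lie in the same maximal ideals. The paper avoids this entirely: it proves $\mathcal{Z}_{\mathcal{N}}$ is a closed map by showing $\mathcal{Z}_{\mathcal{N}}(N_f)=\bigcap\{\overline{E}: E\in \mathcal{Z}_{\mathcal{N}}(f)\}$, which is closed as an \emph{intersection} of basic closed sets; one never needs the image of a basic closed set to be a single basic closed set.

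That said, the sound kernel of your argument can be salvaged by running it in the other direction. For $f=\chi_{A^c}$ with $A\in\mathcal{A}$, Lemma \ref{27} does give $\mathcal{Z}_{\mathcal{N}}(f)=\langle A\rangle$, and since $M=\mathcal{Z}_{\mathcal{N}}^{-1}[\mathcal{Z}_{\mathcal{N}}[M]]$ (from the proof of Theorem \ref{28}) and $\mathcal{Z}_{\mathcal{N}}[M]$ is closed under supersets, one gets $\chi_{A^c}\in M \Leftrightarrow A\in \mathcal{Z}_{\mathcal{N}}[M] \Leftrightarrow \mathcal{Z}_{\mathcal{N}}[M]\in\overline{A}$. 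Thus $\mathcal{Z}_{\mathcal{N}}^{-1}(\overline{A})=\mathcal{B}_{\chi_{A^c}}$ is closed for every basic closed set $\overline{A}$ of $\hat{X}$, so $\mathcal{Z}_{\mathcal{N}}$ is continuous, and a continuous bijection from the compact space $\Max(\mathcal{N})$ (Theorem \ref{32}) to the Hausdorff space $\hat{X}$ (Theorem \ref{12}) is a homeomorphism. If you rewrite the proof along these lines --- proving continuity via characteristic functions only, rather than computing the image of $\mathcal{B}_f$ for arbitrary $f$ --- the gap disappears; as written, the argument is broken.
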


\begin{proof}
Since each of the two spaces $\Max(\mathcal{N})$ and $\hat{X}$ is already known to be a compact Hausdorff space, it suffices to check that $\mathcal{Z}_{\mathcal{N}}$ is a closed map. A typical basic closed set in the space $\Max(\mathcal{N})$ is a set of the form $N_f=\{ M\in \Max(\mathcal{N}): f\in M \}$, for some $f\in \mathcal{N}(X,\mathcal{A})$. It is enough to show that $\mathcal{Z}_{\mathcal{N}}(N_f)=\cap \{\bar{E}: E\in \mathcal{Z}_\mathcal{N}(f) \}$, which is an intersection of a family of basic closed sets in $\hat{X}$, and is hence a closed set in $\hat{X}$. We see that if $M\in N_f$, then $f\in M$.
Consequently $\mathcal{Z}_\mathcal{N}(f)\subseteq \mathcal{Z}_\mathcal{N}[M]$, meaning if $E\in \mathcal{Z}_\mathcal{N}(f)$, then $E$ belongs to the $A$-ultrafilter $\mathcal{Z}_\mathcal{N}[M]$, so that $\mathcal{Z}_\mathcal{N}[M]\in \bar{E}$. Thus $\mathcal{Z}_\mathcal{N}(N_f)\subseteq \cap \{\bar{E}: E\in \mathcal{Z}_\mathcal{N}(f) \}$. 
Conversely if $M\in \Max(\mathcal{N})$ such that $\mathcal{Z}_\mathcal{N}[M]\in \bar{E}$ for every $E\in \mathcal{Z}_\mathcal{N}(f)$, then $E\in \mathcal{Z}_\mathcal{N}[M]$.
Hence $E\in \mathcal{Z}_\mathcal{N}[M]$ for each $E\in \mathcal{Z}_\mathcal{N}(f)$; 
thus $\mathcal{Z}_\mathcal{N}(f)\subseteq \mathcal{Z}_\mathcal{N}[M]$, which implies that $f\in \mathcal{Z}_\mathcal{N}^{-1}\mathcal{Z}_\mathcal{N}[M]=M$ (as $M$ is a maximal ideal of $\mathcal{N}(X,\mathcal{A})$), i.e.\ $M\in N_f$ and so $\mathcal{Z}_\mathcal{N}[M]\in \mathcal{Z}_\mathcal{N}(N_f)$. Hence $\cap \{ \bar{E}: E\in \mathcal{Z}_\mathcal{N}(f)\}\subseteq \mathcal{Z}_\mathcal{N}[N_f]$. 
\end{proof}

We have previously observed that each ideal in $\mathcal{M}(X,\mathcal{A})$ is a $z^\circ$-ideal (indeed $\mathcal{M}(X,\mathcal{A})$ is a Von-Neumann regular ring). We shall now show that this property characterizes $\mathcal{M}(X,\mathcal{A})$ among intermediate rings of real valued measurable functions on $(X,\mathcal{A})$. 

\begin{theorem}\label{34}
An intermediate ring $\mathcal{N}(X,\mathcal{A})$ becomes identical to $\mathcal{M}(X,\mathcal{A})$ if and only if each ideal of $\mathcal{N}(X,\mathcal{A})$ is a $z^\circ$-ideal.
\end{theorem}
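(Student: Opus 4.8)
The forward direction is immediate and has effectively already been recorded: if $\mathcal{N}(X,\mathcal{A})=\mathcal{M}(X,\mathcal{A})$ then the ring is Von-Neumann regular, and in a Von-Neumann regular ring every ideal is a $z^\circ$-ideal. So I will concentrate on the converse. Assuming every ideal of $\mathcal{N}(X,\mathcal{A})$ is a $z^\circ$-ideal, and recalling that $\mathcal{N}(X,\mathcal{A})\subseteq\mathcal{M}(X,\mathcal{A})$ always holds, the goal is to prove $\mathcal{M}(X,\mathcal{A})\subseteq\mathcal{N}(X,\mathcal{A})$.

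The heart of the argument is the claim that every $a\in\mathcal{N}(X,\mathcal{A})$ with $Z(a)=\emptyset$ is a unit of $\mathcal{N}(X,\mathcal{A})$. First note that such an $a$ is a non-zero-divisor of $\mathcal{N}(X,\mathcal{A})$, since $ca=0$ with $c\in\mathcal{N}(X,\mathcal{A})$ forces $c(x)=0$ for every $x\in X$. Now $\mathcal{N}(X,\mathcal{A})$, being a subring of the reduced ring $\mathcal{M}(X,\mathcal{A})$, is itself reduced; and in a reduced ring the union of the minimal prime ideals is precisely the set of zero-divisors, so no minimal prime of $\mathcal{N}(X,\mathcal{A})$ contains $a$, whence $P_a=\mathcal{N}(X,\mathcal{A})$. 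Since $a$ lies in the principal ideal $(a)$, which is a $z^\circ$-ideal by hypothesis, we obtain $\mathcal{N}(X,\mathcal{A})=P_a\subseteq(a)$, so $(a)=\mathcal{N}(X,\mathcal{A})$ and $a$ is invertible. (Equivalently, one may invoke the well-known description $P_a=\{b:\operatorname{Ann}(a)\subseteq\operatorname{Ann}(b)\}$ valid in any reduced ring, together with $\operatorname{Ann}_{\mathcal{N}}(a)=\{0\}$, to conclude $1\in P_a\subseteq(a)$ directly.)

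With the claim in hand, the remainder is routine. Given $g\in\mathcal{M}(X,\mathcal{A})$, the functions $h_1=1+\lvert g\rvert$ and $h_2=1+\lvert g\rvert+g$ lie in $\mathcal{M}(X,\mathcal{A})$ and satisfy $h_i\ge 1$ (for $h_2$, because $g\ge-\lvert g\rvert$); hence each $1/h_i$ is a bounded measurable function, so $1/h_i\in\mathcal{M}^*(X,\mathcal{A})\subseteq\mathcal{N}(X,\mathcal{A})$, and $1/h_i$ never vanishes. By the claim, each $1/h_i$ is a unit of $\mathcal{N}(X,\mathcal{A})$, so $h_i\in\mathcal{N}(X,\mathcal{A})$, and therefore $g=h_2-h_1\in\mathcal{N}(X,\mathcal{A})$. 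This gives $\mathcal{M}(X,\mathcal{A})\subseteq\mathcal{N}(X,\mathcal{A})$, which is what we wanted.

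The one step that needs care is the identification $P_a=\mathcal{N}(X,\mathcal{A})$ for a non-vanishing $a$; this rests on the standard reduced-ring fact that the zero-divisors are exactly the elements lying in minimal prime ideals, which I would justify via the observation that the localization of a reduced ring at a minimal prime is a field (and, if preferred, bypass the ``empty intersection'' convention by using the annihilator description of $P_a$ instead). Everything else — measurability of $1/h_i$, the inequalities $h_i\ge 1$, and the final subtraction — is elementary.
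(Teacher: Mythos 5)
Your proof is correct and follows essentially the same route as the paper's: both arguments reduce to the fact that a proper $z^\circ$-ideal in a reduced ring can contain only zero-divisors, applied to the principal ideal generated by a nowhere-vanishing bounded function such as $\frac{1}{1+\lvert g\rvert}$. The only difference is cosmetic --- where the paper passes from the invertibility of $\frac{1}{1+\lvert f\rvert}$ to $\lvert f\rvert\in\mathcal{N}(X,\mathcal{A})$ and then asserts $f\in\mathcal{N}(X,\mathcal{A})$, you write $g=(1+\lvert g\rvert+g)-(1+\lvert g\rvert)$, which handles the sign explicitly and makes that last step airtight.
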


\begin{proof}
Suppose each ideal of $\mathcal{N}(X,\mathcal{A})$ be a $z^\circ$-ideal. 
We claim that for any $f\in \mathcal{M}(X,\mathcal{A})$, the function $\frac{1}{1+\lvert f\rvert}$ is a unit in $\mathcal{N}(X,\mathcal{A})$.
It would follow that $\lvert f\rvert \in \mathcal{N}(X,\mathcal{A})$ and consequently $f\in \mathcal{N}(X,\mathcal{A})$.
To prove the claim suppose toward a contradiction that there is an $f\in \mathcal{M}(X,\mathcal{A})$ such that $\frac{1}{1+\lvert f\rvert}$ is not a unit in $\mathcal{N}(X,\mathcal{A})$. Since $\frac{1}{1+\lvert f\rvert}\in \mathcal{N}(X,\mathcal{A})$, it follows that the principal ideal $(\frac{1}{1+\lvert f\rvert})$ in $\mathcal{N}(X,\mathcal{A})$ is a proper ideal and is hence a $z^\circ$-ideal by hypothesis. But $\frac{1}{1+\lvert f\rvert}$ is clearly not a divisor of zero in $\mathcal{N}(X,\mathcal{A})$.  Since each element of a $z^\circ$-ideal in a reduced ring is a divisor of zero a fact easily verifiable, this is a contradiction.
\end{proof}

\begin{corollary}\label{35}
An intermediate ring $\mathcal{N}(X,\mathcal{A})$ is Von-Neumann regular if and only if $\mathcal{N}(X,\mathcal{A})=\mathcal{M}(X,\mathcal{A})$.
\end{corollary}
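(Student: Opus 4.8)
The plan is to obtain Corollary~\ref{35} by chaining Theorem~\ref{34} with two facts already recorded in the paper, so the argument should be very short. For the backward implication, if $\mathcal{N}(X,\mathcal{A})=\mathcal{M}(X,\mathcal{A})$ then Von-Neumann regularity is immediate from the remark following Theorem~\ref{2}: every $f\in\mathcal{M}(X,\mathcal{A})$ is a multiple of $f^2$, which is exactly the definition of a (commutative) Von-Neumann regular ring.

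For the forward implication, suppose $\mathcal{N}(X,\mathcal{A})$ is Von-Neumann regular. The key step is to invoke the fact quoted after Corollary~\ref{3}, drawn from \cite[Remark 1.6(a)]{AKA2000}, that every ideal of a Von-Neumann regular ring is a $z^\circ$-ideal. Applying this to $\mathcal{N}(X,\mathcal{A})$ shows that each of its ideals is a $z^\circ$-ideal, and then Theorem~\ref{34} yields at once that $\mathcal{N}(X,\mathcal{A})=\mathcal{M}(X,\mathcal{A})$. One small point worth a sentence: the cited $z^\circ$-ideal result is phrased for Von-Neumann regular rings, which are automatically reduced, and in any case $\mathcal{N}(X,\mathcal{A})$ inherits reducedness from $\mathcal{M}(X,\mathcal{A})$, so there is nothing to check before applying it.

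I do not anticipate a genuine obstacle here; the substance of the corollary is entirely carried by Theorem~\ref{34}, and the remaining work is just recalling the standard characterization of commutative Von-Neumann regular rings and the standard fact that their ideals are $z^\circ$-ideals. If one preferred to avoid citing the $z^\circ$-ideal characterization of Von-Neumann regular rings directly, an alternative is to reprove it in this setting — in a Von-Neumann regular ring each principal ideal is generated by an idempotent, hence is a direct summand and in particular a $z^\circ$-ideal — but this detour is unnecessary given what is already available in the excerpt.
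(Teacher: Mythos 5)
Your proposal is correct and follows essentially the same route as the paper: the paper proves the forward implication in contrapositive form (if $\mathcal{N}(X,\mathcal{A})\subsetneq \mathcal{M}(X,\mathcal{A})$, Theorem~\ref{34} gives an ideal that is not a $z^\circ$-ideal, contradicting the fact that every ideal of a Von-Neumann regular ring is a $z^\circ$-ideal), which is logically identical to your direct argument. Your explicit handling of the backward implication via the remark after Theorem~\ref{2} is a reasonable small addition that the paper leaves implicit.
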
 

\begin{proof}
	If $\mathcal{N}(X,\mathcal{A})\subsetneq \mathcal{M}(X,\mathcal{A})$, then by Theorem \ref{34} there exists an ideal $I$ in $\mathcal{N}(X,\mathcal{A})$ which is not a $z^\circ$-ideal. Since in a Von-Neumann regular ring each (proper) ideal is a $z^\circ$-ideal, it follows that $\mathcal{N}(X,\mathcal{A})$ is not a regular ring.
\end{proof}	 
We have observed earlier (vide Theorem \ref{2}) that for $f,g\in \mathcal{M}(X,\mathcal{A})$, $Z(f)\supseteq Z(g)$ if and only if $f$ is a multiple of $g$.
The following result indicates that this fact also characterizes $\mathcal{M}(X,\mathcal{A})$ among the intermediate rings.

\begin{theorem}\label{36}
 Let $\mathcal{N}(X,\mathcal{A}) (\subsetneq \mathcal{M}(X,\mathcal{A}))$ be an intermediate ring of measurable functions on the measurable space $(X,\mathcal{A})$. Then there exist $g,h\in \mathcal{N}(X,\mathcal{A})$ such that $Z(g)\supseteq Z(h)$ but $g$ is not a multiple of $h$ in the ring $\mathcal{N}(X,\mathcal{A})$.
 \end{theorem}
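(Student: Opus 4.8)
The plan is to produce, for a proper intermediate ring $\mathcal{N}(X,\mathcal{A})\subsetneq \mathcal{M}(X,\mathcal{A})$, an explicit pair $g,h\in \mathcal{N}(X,\mathcal{A})$ with $Z(g)\supseteq Z(h)$ where $g$ fails to be a multiple of $h$ in $\mathcal{N}$. The starting observation is that since $\mathcal{N}(X,\mathcal{A})\subsetneq \mathcal{M}(X,\mathcal{A})$, there is some $\varphi\in \mathcal{M}(X,\mathcal{A})\setminus \mathcal{N}(X,\mathcal{A})$; replacing $\varphi$ by $1+\lvert\varphi\rvert$ we may assume $\varphi\ge 1$ on $X$ and $\varphi\notin \mathcal{N}(X,\mathcal{A})$ (note $\varphi\in\mathcal{N}$ would force $\lvert\varphi\rvert\in\mathcal{N}$ by the argument in the proof of Theorem~\ref{34}, hence the original function in $\mathcal{N}$). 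In particular $\varphi$ is unbounded, since every bounded measurable function already lies in $\mathcal{N}^*(X,\mathcal{A})\subseteq\mathcal{N}(X,\mathcal{A})$.

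Next I would take $h=\dfrac{1}{\varphi}$, which is a bounded measurable function (values in $(0,1]$), hence $h\in \mathcal{M}^*(X,\mathcal{A})\subseteq \mathcal{N}(X,\mathcal{A})$, and $Z(h)=\emptyset$. For $g$ take $g=\boldsymbol{1}$, the constant function $1$, which certainly lies in $\mathcal{N}(X,\mathcal{A})$ and satisfies $Z(g)=\emptyset\supseteq Z(h)$. Now if $g$ were a multiple of $h$ in $\mathcal{N}(X,\mathcal{A})$, say $\boldsymbol{1}=h\cdot k$ for some $k\in \mathcal{N}(X,\mathcal{A})$, then evaluating pointwise gives $k=1/h=\varphi$, forcing $\varphi\in \mathcal{N}(X,\mathcal{A})$, a contradiction. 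Thus $g$ is not a multiple of $h$ in $\mathcal{N}(X,\mathcal{A})$, which is exactly the assertion.

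The only genuine point requiring care — and the step I would flag as the main obstacle, though it is mild — is justifying the reduction to an unbounded $\varphi\ge 1$ that stays outside $\mathcal{N}(X,\mathcal{A})$: one must check that passing from an arbitrary $\varphi_0\in\mathcal{M}\setminus\mathcal{N}$ to $\varphi=1+\lvert\varphi_0\rvert$ preserves non-membership in $\mathcal{N}$. This is because $\mathcal{N}(X,\mathcal{A})$ contains all constants, is closed under the operations of a ring and (as a sublattice, since $\mathcal{M}^*$ absorbs the relevant truncations) under $\lvert\cdot\rvert$ in the sense used in Theorem~\ref{34}: if $1+\lvert\varphi_0\rvert\in\mathcal{N}$ then $\lvert\varphi_0\rvert\in\mathcal{N}$, and then $\frac{1}{1+\lvert\varphi_0\rvert}$ is a unit of $\mathcal{N}$ witnessing $\varphi_0\in\mathcal{N}$, contradiction. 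Everything else is a routine pointwise computation, so the proof is short once this normalization is in place.
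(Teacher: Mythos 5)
Your proof is correct and follows essentially the same route as the paper's: both select a function $f\in\mathcal{M}(X,\mathcal{A})\setminus\mathcal{N}(X,\mathcal{A})$ and take $h=\frac{1}{1+\lvert f\rvert}$, a bounded function with $Z(h)=\emptyset$, then derive a contradiction from a hypothetical factorization $g=hk$ in $\mathcal{N}(X,\mathcal{A})$. The only difference is that the paper chooses $g=\frac{f}{1+\lvert f\rvert}$, so that $g=hk$ forces $k=f\notin\mathcal{N}(X,\mathcal{A})$ immediately, whereas your choice $g=\boldsymbol{1}$ forces $k=1+\lvert \varphi_0\rvert$ and hence needs the extra normalization step that $1+\lvert\varphi_0\rvert\notin\mathcal{N}(X,\mathcal{A})$, which you justify correctly by the argument from Theorem~\ref{34}.
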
 

\begin{proof}
	We can choose $f\in \mathcal{M}(X,\mathcal{A})\setminus \mathcal{N}(X,\mathcal{A})$. Take $g=\frac{f}{1+\lvert f\rvert}$ and $h=\frac{1}{1+\lvert f\rvert}$. Then $g$ and $h$ are both bounded functions on $X$ and hence $g,h\in \mathcal{N}(X,\mathcal{A})$. We observe that $Z(g)\supseteq Z(h)=\emptyset$. But we claim that $g$ is not a multiple of $h$ in this ring $\mathcal{N}(X,\mathcal{A})$.
	To prove this claim, suppose there exists $k\in \mathcal{N}(X,\mathcal{A})$ with the relation $g=hk$. This means 
\begin{equation}\label{eq:frak}
\frac{f}{1+\lvert f\rvert}=\frac{k}{1+\lvert f\rvert}.
\end{equation} Since all the functions are real valued, on multiplying both sides of \eqref{eq:frak} by $1+\lvert f\rvert$, we get $ f=k$.
But this is a contradiction since $f\notin \mathcal{N}(X,\mathcal{A})$ while $k\in \mathcal{N}(X,\mathcal{A})$.
\end{proof}

\section{$P$-spaces and continuous functions}
\label{sec:openproblems}
The ring and lattice structures of $\mathcal{M}(X,\mathcal{A})$ share a number of properties possessed by the lattice ordered ring $C(Y)$ of all real valued continuous functions defined over a $P$-space $Y$. Here are some of the properties shared by both $M(X,\mathcal{A})$ and $C(Y)$: 
every prime ideal is maximal (Corollary \ref{3} and \cite[\S4J]{GJ});
each ideal is a $z^\circ$-ideal (a consequence of the previous property);
each ideal is closed when the $m$-topology is imposed on the ring (Theorem \ref{16} and \cite[\S7Q4]{GJ});
an ideal is maximal if and only if its residue class ring is totally ordered (Theorem \ref{19} and \cite[\S5P]{GJ};
the structure space is a compact Hausdorff zero-dimentional space (Theorem \ref{7} and \cite[\S7N]{GJ} in light of the fact that $\beta Y$ is basically disconnected and in particular zero-dimensional); as well as other properties.

The following proposition shows that when $Y$ is a $P$-space, then for some specific choice of the measurable space $(X,\mathcal{A})$, the ring $\mathcal{M}(X,\mathcal{A})$ is identical to $C(Y)$.

\begin{theorem}\label{thm:Pspaces}
If $X$ is a $P$-space, then the set $Z(X)$ of all zero-sets in $X$ is a $\sigma$-algebra on $X$, and if $\mathcal{A} = Z(X)$, then $\mathcal{M}(X,\mathcal{A})=C(X)$ and $\mathcal{M}^*(X,\mathcal{A})=C^*(X)$.
\end{theorem}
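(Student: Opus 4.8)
The plan is to establish three things in sequence: that $Z(X)$ is closed under complementation, that it is closed under countable unions, and finally that $\mathcal{M}(X,Z(X)) = C(X)$ (from which the bounded version follows immediately). Recall the two standard facts about a $P$-space $Y$ that I will lean on: every zero-set in $Y$ is open (equivalently, every cozero-set is closed, equivalently every $G_\delta$ is open), and consequently $Z(Y)$ coincides with the family of all clopen sets — indeed with the family of all open sets that are simultaneously closed. These are exactly the characterizations of $P$-spaces in \cite[\S4J]{GJ}.

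First I would show $Z(X)$ is a $\sigma$-algebra. Closure under complementation: if $A = Z(f)$ is a zero-set, then since $X$ is a $P$-space, $A$ is clopen, so $X \setminus A$ is also clopen, hence $X\setminus A$ is a cozero-set; but a clopen cozero-set is a zero-set (its complement $A$ is a cozero-set, being clopen, so $X\setminus A = Z(g)$ for a suitable $g$, e.g.\ $g = \chi_{A}$ which is continuous because $A$ is clopen). Closure under countable unions: given zero-sets $A_n = Z(f_n)$, each $A_n$ is clopen, so $\bigcup_n A_n$ is open; I claim it is also closed, hence clopen, hence a zero-set (same argument as above, using its characteristic function). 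To see it is closed, note $X\setminus \bigcup_n A_n = \bigcap_n (X\setminus A_n)$ is a countable intersection of clopen — in particular zero- — sets, and in a $P$-space a countable intersection of zero-sets is a zero-set (the defining property: $P$-spaces are precisely the spaces in which $Z(X)$ is closed under countable intersection, \cite[\S4J]{GJ}), so the complement is a zero-set, hence clopen, hence $\bigcup_n A_n$ is clopen. Thus $Z(X)$ is a $\sigma$-algebra.

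Next, with $\mathcal{A} = Z(X)$, I would prove $\mathcal{M}(X,\mathcal{A}) = C(X)$. The inclusion $C(X)\subseteq \mathcal{M}(X,\mathcal{A})$ is routine: for continuous $f$ and real $\alpha$, the set $f^{-1}(\alpha,\infty)$ is a cozero-set; in a $P$-space every cozero-set is a zero-set, so it lies in $\mathcal{A}$, whence $f$ is $\mathcal{A}$-measurable. For the reverse inclusion, let $f\in \mathcal{M}(X,\mathcal{A})$; I must show $f$ is continuous. For every pair of rationals $p<q$, the set $f^{-1}(p,q) = f^{-1}(p,\infty)\cap (X\setminus f^{-1}[q,\infty))$ — and $f^{-1}[q,\infty) = \bigcap_{n} f^{-1}(q - 1/n,\infty)$ is a countable intersection of members of $\mathcal{A} = Z(X)$, hence in $\mathcal{A}$, hence clopen; therefore $f^{-1}(p,q)$ is clopen, in particular open. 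Since the intervals $(p,q)$ with $p,q\in\mathbb{Q}$ form a base for $\mathbb{R}$, $f$ is continuous. Finally $\mathcal{M}^*(X,\mathcal{A}) = C^*(X)$ follows because boundedness is the same condition on both sides once the underlying function classes coincide.

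The main obstacle — and the only place real care is needed — is the verification that $Z(X)$ is closed under countable unions, because that is precisely where the $P$-space hypothesis is doing essential work (without it, $Z(X)$ is merely a lattice, not a $\sigma$-algebra). The key is the passage ``clopen $\Rightarrow$ zero-set'': one must be slightly careful that it is not automatic that an open-and-closed set is a zero-set in an arbitrary space, but it \emph{is} true here because $\chi_A$ is continuous whenever $A$ is clopen, and then $A = Z(1 - \chi_A)$. Everything else is bookkeeping with the standard $P$-space equivalences.
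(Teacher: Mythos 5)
Your proof is correct and follows essentially the same route as the paper's: both rest on the $P$-space facts that zero-sets are clopen and coincide with cozero-sets, which gives the $\sigma$-algebra property, and then identify measurable functions with continuous ones by pulling back (co)zero-sets and using the openness of members of $Z(X)$. One small correction: closure of $Z(X)$ under countable \emph{intersection} holds in every topological space (see \cite[1.14]{GJ}), not only in $P$-spaces --- the defining $P$-space property is that such intersections (equivalently, zero-sets) are \emph{open} --- but this misattribution does not affect the validity of your argument.
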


\begin{proof}
Since the zero-sets and cozero-sets of the $P$-space $X$ are one and the same (see \cite[\S J(3)]{GJ}) and $Z(X)$ is closed under countable intersection, it follows clearly that $Z(X)$ is a $\sigma$-algebra on $X$.

If $f\in \mathcal{M}(X,\mathcal{A})$, then for any open set $G$ in $\mathbb{R}$, $f^{-1}(G)$ is a member of $\mathcal{A}=Z(X)$, in particular $f^{-1}(G)$ is open in $X$.  Hence $f\in C(X)$. 
Conversely, let $f\in C(X)$, and $a\in \mathbb{R}$, then $(-\infty,a]$ is a closed set and hence a zero set in $\mathbb{R}$. Since the preimage of
 a zero set under a continuous map is a zero set, it follows that $f^{-1}(-\infty,a]$ is a zero set in $X$ and therefore a member of $\mathcal{A}$. Thus $f$ turns out to be a measurable function i.e. $f\in \mathcal{M}(X,\mathcal{A})$. 
Hence $\mathcal{M}(X,\mathcal{A}) = C(X)$ and $\mathcal{M}^*(X,\mathcal{A}) = C^*(X)$ immediately follows.
\end{proof}

Theorem \ref{thm:Pspaces} contributes to Question \ref{quest:realfixed} raised earlier, by clarifying that it is possible for there to be a free real maximal ideal of a ring of measurable functions, as the next example shows.
\begin{example}\label{ex:realmaxnotfixed}
There is a $P$-space $X$ that is not realcompact given in \cite[Exercise 9L]{GJ}.
Then $C(X)$ has a free real maximal ideal.
By Theorem \ref{thm:Pspaces}, $C(X) = \mathcal{M}(X,Z[X])$, and hence $\mathcal{M}(X,Z[X])$ has a free real maximal ideal.
\end{example} 

A question that naturally arises from Theorem \ref{thm:Pspaces} is as follows.
\begin{question}
Given a (possibly infinite) $\sigma$-algebra $\mathcal{A}$ on a set $X$, does there exist a $P$-space $Y$ such that the ring or equivalently the lattice structure of $\mathcal{M}(X,\mathcal{A})$ and $C(Y)$ are isomorphic?
\end{question}
One possible candidate for the space $Y$ is the set $X$, equipped with the weak topology induced by $\mathcal{M}(X,\mathcal{A})$; as the characteristic functions of measurable sets are in $\mathcal{M}(X,\mathcal{A})$, this topology is the smallest topology containing $\mathcal{A}$. But we show by way of a counterexample below that for such a choice of $Y$, even if $Y$ is a $P$-space, it may happen that $\mathcal{M}(X,\mathcal{A})$ is not isomorphic to $C(Y)$.  

\begin{example}
Let $X=[0,1]$ and $\mathcal{A}$ be the $\sigma$-algebra of all Lebesgue measurable sets on $[0,1]$. Let $\tau$ be the smallest topology on $X$ that contains $\mathcal{A}$ (equivalently, the weak topology on $X$ induced by $\mathcal{M}(X,\mathcal{A})$). 
Since every one-point set is Lebesgue measurable, all of the singleton sets are open.
Hence $(X,\tau)$ is the discrete topological space.
Then $C(X,\tau)$ consists of \emph{all} real-valued functions on $[0,1]$, and is hence distinct from $\mathcal{M}(X,\mathcal{A})$. 

To see that $C(X,\tau)$ and $\mathcal{M}(X,\mathcal{A})$ are not even isomorphic,
we look at their lattice structure. 
Since $Y$ is discrete, it is in particular extremally disconnected, meaning that every zero-set has an open closure. Consequently by Stone-Nakano's theorem (\cite[\S3N6]{GJ}), $C(Y)$ is a conditionally complete lattice in the sense that each nonempty subset of $C(Y)$, with an upper bound in $C(Y)$ has a supremum also lying in $C(Y)$. But we show that the lattice $\mathcal{M}(X,\mathcal{A})$ is not conditionally complete and hence $\mathcal{M}(X,\mathcal{A})$ and $C(Y)$ are not isomorphic as lattices, and consequently not isomorphic as rings. Indeed for each point $s$ lying on a fixed non Lebesgue measurable set $S$ in $[0,1]$, let $\chi_{\{s\}}$ be the characteristic function of $\{s\}$. Surely $\{\chi_{\{s\}}:s\in S\}$ is a subfamily of $\mathcal{M}(X,\mathcal{A})$, which is bounded above in this ring by the constant function 1. However $\sup \{ \chi_{\{s\}}:s\in S\}$ does not exist in $\mathcal{M}(X,\mathcal{A})$, an easy verification.
\end{example}
This example raises the following questions.
\begin{question}
Under what conditions on a $\sigma$-algebra $\mathcal{A}$ on $X$, is the ring $\mathcal{M}(X,\mathcal{A})$ isomorphic to the ring $C(X,\tau)$, where $\tau$ is the smallest topology on $X$ containing $\mathcal{A}$? 
\end{question}
\begin{question}
Under what conditions is the weak topology on $X$, induced by $\mathcal{M}(X,\mathcal{A})$ a $P$-space?
\end{question}

\section{Acknowledgement}
The authors would like to thank Professor Alan Dow for suggesting us the proof of Lemma \ref{14}.

\bibliographystyle{plain}

\end{document}